\newtheorem{thm}{Theorem}[section]
\newtheorem{lem}[thm]{Lemma}
\newtheorem{prop}[thm]{Proposition}
\newtheorem{cor}[thm]{Corollary}
\theoremstyle{definition}
\newtheorem{de}[thm]{Definition}
\newtheorem{conj}{Conjecture}
\theoremstyle{remark}
\theoremstyle{Conjecture}
\numberwithin{equation}{section}
\def \N {\mathbb N}
\def \Z {\mathbb Z}
\def \F {\mathcal F}
\def \G {\mathcal{G}}
\def \U {\mathcal U}
\def \V {\mathcal V}
\def \X {\mathcal{X}}
\def \Y {\mathcal{Y}}
\def \O {\mathcal{O}}
\def \Q {\mathbf{Q}}
\def \E {\mathbb{E}}
\def \cl {{\rm cl}}
\def \lra {\rightarrow}
\def \ra {\rightarrow}
\def \ep {\epsilon}
\def \d {\delta}
\def \D {\Delta}
\def \G {\mathcal G}
\def \id {{\rm id}}
\def \cl {{\rm cl}}
\def \intt {{\rm int}}
\def \lra {\longrightarrow}
\def \ra {\rightarrow}
 \newcommand{\RP}{\textbf{RP}}
 \newcommand{\AP}{\textbf{AP}}
\begin{document}
\title{Topological characteristic factors and nilsystems}

\author{Eli Glasner$^1$}
\address{$^1$Department of Mathematics, Tel Aviv University, Tel Aviv, Israel}
\email{glasner@math.tau.ac.il}
\author{Wen Huang}
\author{Song Shao}
\author{Benjamin Weiss$^2$}
\address{$^2$Institute of Mathematics, Hebrew University of Jerusalem, Jerusalem, Israel}
\email{weiss@math.huji.ac.il}
\author{Xiangdong Ye}
\address{CAS Wu Wen-Tsun Key Laboratory of Mathematics, and
Department of Mathematics, University of Science and Technology of China, Hefei, Anhui, 230026, P.R. China}

\email{wenh@mail.ustc.edu.cn}
\email{songshao@ustc.edu.cn}
\email{yexd@ustc.edu.cn}

\subjclass[2000]{Primary: 37B40, 37B05} \keywords{Multiple recurrence, maximal equicontinuous factor}

\thanks{This research is supported by National Natural
Science Foundation of China (11971455, 11731003, 11571335, 11431012).}

\date{June 1, 2016}
\date{April, 2019}
\date{Feb. 6, 2020}
\date{May 10, 2020}
\date{May, 30, 2020}
\date{June 2, 2020}
\date{June 19, 2020}

\begin{abstract}
We prove that the maximal infinite step pro-nilfactor $X_\infty$ of a minimal
dynamical
system $(X,T)$ is the
topological characteristic factor in a certain sense. 
Namely, we show that by an almost one to one modification of $\pi:X \rightarrow X_\infty$, the
induced open extension
$\pi^*:X^* \rightarrow X^*_\infty$ has the following property: for $x$ in a dense $G_\d$ set of $X^*$, the orbit closure
$L_x=\overline{\O}((x,x,\ldots,x), T\times T^2\times \ldots \times T^d)$
is $(\pi^*)^{(d)}$-saturated, i.e.
$L_x=((\pi^*)^{(d)})^{-1}(\pi^*)^{(d)}(L_x)$.


Using results derived from the above fact, we
are able to
answer several open questions:
(1) if $(X,T^k)$ is minimal for some $k\ge 2$, then for any $d\in \N$ and any $0\le j<k$ there is a sequence
$\{n_i\}$ of $\Z$ with $n_i\equiv j\ (\text{mod}\ k)$ such that $T^{n_i}x\rightarrow x, T^{2n_i}x\rightarrow x, \ldots, T^{dn_i}x\rightarrow x$
for $x$ in a dense $G_\delta$ subset of $X$;
(2) if $(X,T)$ is totally minimal, 
then $\{T^{n^2}x:n\in\Z\}$ is dense in $X$ for $x$ in a dense $G_\delta$ subset of $X$; 
(3) for any $d\in\N$ and any minimal system, 
which is an open extension of its maximal distal factor, $\RP^{[d]}=\AP^{[d]}$,
where the latter is the regionally proximal relation of order $d$ along arithmetic progressions.
\end{abstract}

\maketitle

\tableofcontents


\section{Introduction}

In this section we will first
provide some background
related to characteristic factors, present some open questions,
and then state our main results and explain the main ideas of the proofs.

\subsection{Backgrounds}\

\medskip

\subsubsection{Characteristic factors}\

\medskip

A connection between ergodic theory and additive combinatorics was established
 in the 1970's with Furstenberg's elegant proof of
Szemer\'edi's theorem
via  ergodic theory. Furstenberg \cite{F77} proved
Szemer\'edi's  theorem
by means of the following theorem: let $T$ be
a measure preserving transformation (m.p.t. for short) on the Borel probability space
$(X,\X,\mu)$, then for every $d \ge 1$ and $A\in \mathcal{X}$ with positive
measure,
\begin{equation}\label{liminf-Fur}
    \liminf_{N\to \infty} \frac{1}{N}\sum_{n=0}^{N-1}
    \mu(A\cap T^{-n}A\cap T^{-2n}A\cap \ldots \cap T^{-dn}A)>0.
\end{equation}
In view of this theorem
it is natural to ask about the convergence of these averages; or
more generally, about the convergence, either in $L^2(X,\mu)$ or pointwise, of the {\em multiple
ergodic averages}
\begin{equation}\label{m-ave}
 \frac 1 N\sum_{n=0}^{N-1}f_1(T^nx)\ldots
f_d(T^{dn}x),
\end{equation} where $f_1, \ldots , f_d \in L^\infty(X,\mu)$.
After nearly 30 years' efforts of many researchers, this problem (for $L^2$ convergence) was
finally solved in \cite{HK05, Z}.

In the study of
the avarages
(\ref{m-ave}), the idea of characteristic factors plays an
important role.
For the
origin of these ideas and this terminology, see \cite{F77} and \cite{FW96}. To be more precise,
let $(X,\X,\mu, T)$ be a
measure preserving transformation
(m.p.t.)
 and $(Y,\Y,\nu, T)$ be a factor of $X$. For $d\ge 1$, we say that $Y$ is a
{\em characteristic factor} of $X$  if for all $f_1,\ldots,f_d\in L^\infty(X,\mu)$,
\begin{equation*}
\frac{1}{N}\sum_{n=0}^{N-1} f_1(T^nx) \ldots f_d(T^{dn}x)  - \frac{1}{N}\sum_{n=0}^{N-1} \E(f_1|\Y)(T^nx) \ldots \E(f_d|\Y)(T^{dn}x)\to 0
\end{equation*}
in $L^2(X,\mu)$.

Finding a good characteristic factor for
certain schemes of averages often
yields a reduction of the problem of evaluating their limit behavior.
For example,
Furstenberg \cite{F77} proved for each $d\ge 2$, the $(d-1)$-step measurable distal factor (in the
structure theorem of an ergodic m.p.t.) is a characteristic factor
for
(\ref{m-ave}).
The result in \cite{HK05, Z} improves the result of Furstenberg significantly, i.e. they
show that for each $d\ge 2$,
a $(d-1)$-step pro-nilsystem is a characteristic factor
for (\ref{m-ave}).

\medskip

By
a
{\it topological dynamical system} $(X,T)$ (t.d.s. for short) we mean a
homeomorphism $T$ from a compact metric space $X$ to itself.
A  counterpart of the notion of characteristic factors in a t.d.s. was first studied in 1994 by
Glasner \cite{G94}. There, the author studied the characteristic
factors for the transformation $\tau_d=T\times T^2\times \ldots \times T^d$ in the sense of {\it saturation}:
let $\pi: X\rightarrow Y$ be a map between two sets $X$ and $Y$. A subset $L$ of $X$ is called {\em $\pi$-saturated} if
$\{x\in L: \pi^{-1}(\pi(x))\subset L\}=L$, i.e. $L=\pi^{-1}(\pi(L))$.
Given a factor map $\pi: (X,T)\rightarrow (Y,T)$ and $d\ge 2$,
the t.d.s. $(Y,T)$ is said to be a {\em $d$-step topological
characteristic factor (along $\tau_d$)
of  $(X,T)$}, if there exists a dense $G_\d$
subset $\Omega$ of $X$ such that for each $x\in \Omega$ the orbit
closure $L_x=\overline{\O}((x, \ldots,x), \tau_d)$ is $\pi\times \ldots \times
\pi$ ($d$-times) saturated. 


In \cite{G94}, it was shown that for minimal systems, up to a canonically defined proximal extension,
a characteristic family for $\tau_d$ is the family of canonical PI flows of class $d-1$. In particular,
if $(X,T)$ is distal, then its largest class $d-1$ distal factor (in the structure theorem of Furstenberg
\cite{F63})
is its topological characteristic factor along $\tau_d$. Moreover, if $(X,T)$ is weakly mixing, then the trivial system is
its topological characteristic factor. 

\medskip
As in the ergodic situation, in topological dynamics one expects that the largest class $d-1$ distal factor can be replaced by
the $(d-1)$-step pro-nilfactor. So, based on the result of \cite{G94} and the
parallelism
between
ergodic theory and topological dynamical systems, one naturally asks:

\medskip
\noindent{\bf Question 1:}
{\it Assume that $(X,T)$ is minimal which is a RIC weakly mixing extension of a distal system.
Is it true that its maximal $(d-1)$-step pro-nilfactor
is its topological characteristic factor along $\tau_d$?}


\medskip

(For a technical reason we need to assume here that the extension is RIC, or maybe just open.
As we will see one can always achieve this situation by applying a canonical construction
which in some sense does not change much the original system.)

\medskip

\subsubsection{Odd recurrence}\

\medskip

It is easy to see that one consequence of (\ref{liminf-Fur}) is the following multiple ergodic recurrence theorem (MERT for short):
if $(X,\mathcal{X},\mu, T)$ is a m.p.t., then for each $d\in\N$
and $A\in \mathcal{X}$ with $\mu(A)>0$ there is $n\in\N$ such that
\begin{equation}\label{Fur-linear-ergodic}
\mu(A\cap T^{-n}A\cap \ldots \cap T^{-dn}A)>0.
\end{equation}
As an immediate application of EMRT, one has that if $(X,T)$ is minimal then for each $d\in\N$ and each non-empty open
subset $U$ of $X$, there is $n\in\N$ such that
\begin{equation}\label{Fur-linear-topo}
U\cap T^{-n}U\cap \ldots \cap T^{-dn}U\not=\emptyset.
\end{equation}
We will
refer to this property
as the topological multiple recurrence theorem (TMRT, for short).
For
topological proofs of the TMRT see \cite{FW, F1, BPT, BL96}.
It is easy to see that TMRT is
equivalent to the following statement: if $(X,T)$ is minimal and $d\in\N$, then there is a
dense $G_\delta$ subset $\Omega$ of $X$ such that for each $x\in \Omega$ there is an
increasing sequence $\{n_i\}$ in $\N$ with
\begin{equation}\label{m-recu}
T^{n_i}x\lra x,\ \  T^{2n_i}x\lra x,\ \  \ldots, \ \  T^{dn_i}x\lra x.
\end{equation}

We note that TMRT, or (\ref{m-recu}),
is also equivalent to the well known
Van der Wareden theorem: if $r\in\N$ and
$\N=N_1\cup \ldots \cup N_r$ then one of
the sets $N_i$ contains arbitrarily long arithmetic progressions.

There are
several ways
in which one can generalize (\ref{Fur-linear-ergodic}) and (\ref{Fur-linear-topo}).
The first one is
to extend
these properties
to nilpotent group actions
(there are counterexamples for solvable groups \cite{BL02}). 
For this type of results we refer to \cite{BL96,Leibman94,Leibman98} and the references therein.

Another way is to restrict $n$ to a particular congruence class: $n\equiv j\ (\text{mod}\ k)$
for a given $k\ge 2$ and $0\le j<k$; or
to
other subsets of $\N$, for example
to
the set of primes.
Host and Kra \cite{HK02} (for $d\le 3$) and Frantzikinakis \cite[Corollary 6.5]{Fran04} (for the general $d$)
showed that if $(X,\mathcal{X},\mu, T)$ is a m.p.t. and $T^k$ is ergodic for some $k\ge 2$,
then for any $d\in\N$, any $A\in\mathcal{X}$ with $\mu(A)>0$ and any $0\le j<k$, we have
$\mu(A\cap T^{-n}A\cap \ldots \cap T^{-dn}A)>0,$
for some $n\equiv j\ (\text{mod}\ k)$.

In view of
the results of Host-Kra and Frantzikinakis the following question,
which is well known in the community, was open till now.

\medskip
\noindent{\bf Question 2:}
{\it Let $(X,T^k)$ be minimal for some $k\ge 2$ and $d\in\N$. Is it true that for any non-empty open subset $U$ of $X$ and $0\le j<k$
one has
\begin{equation}\label{Fur-linear-topo-finer}
U\cap T^{-n}U\cap \ldots \cap T^{-dn}U\not=\emptyset,
\end{equation}
for some $n\equiv j\ (\text{mod}\ k)$?}

\medskip
We remark that if $(X,T)$ is minimal and weakly mixing then the Question 2 has an affirmative answer,
see \cite{G94, HSY-19}.
We note that the result can not be obtained by applying \cite{Fran04}, since $(X,T^k)$
is minimal for some $k\ge 2$ does not imply that there is a Borel invariant probability measure $\mu$ with $(X, \mathcal{X}, T^k,\mu)$ ergodic.

\subsubsection{Density problems}\

\medskip

In ergodic theory there are many results stating that the time averages are equal
to the spatial averages under various ergodicity assumptions. For example, the von Neumann mean ergodic theorem tells us that
if $(X,\mathcal{X}, \mu,T)$ is ergodic, then for each $f\in L^2(X,\mu)$, one has $\frac{1}{N}\sum_{n=1}^{N} f(T^nx)\lra \int f d\mu, N\to\infty$ in $L^2(X,\mu)$.
The corresponding topological statement is the following: if $(X,T)$ is a transitive t.d.s., then there is
a dense $G_\delta$ set $\Omega$ of $X$ such that each $x\in \Omega$ has a dense orbit.


Furstenberg \cite{F1} (for $L^2$) and Bourgain \cite{Bo} (pointwisely for general $p$) 
have shown that if
$(X,\mathcal{X}, \mu,T)$ is totally ergodic, then for each $f\in L^p(X,\mu)$ with $p>1$
and each non-constant integral polynomials $P(n)$, we have 
\begin{equation}\label{b-point}
\frac{1}{N}\sum_{n=1}^Nf(T^{P(n)})x\lra \int f d\mu \ \text{in} \ L^p(X,\mu).
\end{equation}

As not every minimal system admits a totally ergodic measure, the following question is natural.

\medskip
\noindent{\bf Question 3:}
{\it Let $(X,T)$ be totally minimal and $P(n)$ be a non-constant integral polynomial. Is it true that
$\{T^{P(n)}x:n\in\Z\}$ is dense in $X$ for $x$ in a dense $G_\delta$ subset of $X$?}

\medskip
We note that the total minimality assumption is necessary for the above question.
Let $X$ be a periodic orbit of period $3$ and then $(X,T^2)$ is minimal
but it is easy to check that
$\{T^{n^2}x:n\in\Z\}$ is not dense in $X$ for any $x\in X$.

\medskip
A more challenging problem is
whether one
can replace the polynomial times by the set of primes in the above question.
{ A convergence similar to (\ref{b-point})
has been proved to be true in ergodic theory due to
Vinogradov \cite{Vin}: under the total ergodicity assumption, for all $f\in L^2(X,\mu)$,
$$\lim_{N\to\infty} \frac{1}{\pi(N)}\sum_{p\le N, p\ {\rm prime}}T^pf=\int fd\mu \quad \text{in} \ L^2(X,\mu),$$
where $\pi(N)$ denotes the number of primes less than or equal to $N$.
See \cite{Bou-87, FranHK07, MR} for more information.}

\subsubsection{Regionally proximal relations of higher order}\

\medskip

Finally we proceed to give the background of the last problem.

The notion of the regionally proximal relation $\RP^{[1]}$ is
an important tool in the study of a t.d.s. For a minimal t.d.s.,
when the acting group is amenable, it is known that it is a closed equivalence relation
and that $X/\RP^{[1]}$ is the maximal equicontinuous factor.
In \cite{HK05} Host and Kra introduced very useful
new
tools, like the so-called Gowers-Host-Kra seminorms,  the $\G^{[d]}$-actions, etc.,
to construct a (pro-nilsystem) factor $Z_{d-1}$, and to show that it is the
characteristic factor for the averages (\ref{m-ave}).

To get the corresponding factors in a t.d.s. in the
pioneering
work
\cite{HKM} Host-Kra-Maass introduced the notion of
the
{\it regionally proximal relation of order $d$}, denoted by $\RP^{[d]}$,
and proved that for a minimal distal
$\Z$-system,
$\RP^{[d]}$ is an equivalence relation and
that
$X/\RP^{[d]}$ is a pro-nilsystem of order $d$.
Later, Shao and Ye \cite{SY} showed that $\RP^{[d]}$ is an equivalence relation for
arbitrary
minimal systems
of abelian groups.
 See Glasner, Gutman and Ye \cite{GGY18} for the
case of
general group actions.

In \cite{HKM} the author study $\RP^{[d]}$ through the so-called {\em dynamical parallelepiped of
dimension $d$}, $\Q^{[d]}(X)$. Since the averages in (\ref{m-ave}) is only related to $\tau_d$, it
is natural to define
a kind of regionally proximal relation of higher order, by using $\tau_d$
directly. In \cite{GHSY} the authors
followed this direction by introducing a notion, called
regionally proximal relation of order
$d$ along arithmetic progressions, denoted by $\AP^{[d]}$.
Among other things, the authors proved that under some additional assumptions,
for a
uniquely ergodic minimal distal
system, one
has $\RP^{[d]}=\AP^{[d]}$ for every $d\in\N$.
A conjecture \cite[Conjecture 1.1]{GHSY} posed there is the following

\medskip
\noindent{\bf Conjecture 1.1 of \cite{GHSY}:}
{\it Let $(X,T)$ be a minimal distal system. Then $\RP^{[d]}=\AP^{[d]}$ for $d\in\N$.}

\subsection{The main results}\

\medskip
In this subsection we state our main results. For a minimal system $(X,T)$ and $d\in\N$
we use $\RP^{[d]}$ to denote the regionally proximal relation of order $d$, and
$\RP^{[\infty]}=\bigcap_{d\ge 1} \RP^{[d]}.$ Let $X_i=X/\RP^{[i]}$, $i\in\N\cup\{\infty\}$.
Then $X_1$ is the maximal equicontinuous factor of $X$.

\medskip
\noindent{\bf Theorem A:} {\it Let $(X,T)$ be a minimal system, and $\pi:X\rightarrow X_\infty$ be the factor map.
Then there are minimal systems $X^*$ and $X_\infty^*$ which are almost one to one
extensions of $X$ and $X_\infty$ respectively, and a commuting diagram below such that  $X_\infty^*$ is a
$d$-step topological characteristic factor of $X^*$ for all $d\ge 2$,
\[
\begin{CD}
X @<{\sigma^*}<< X^*\\
@VV{\pi}V      @VV{\pi^*}V\\
X_\infty @<{\tau^*}<< X_\infty^*
\end{CD}
\]}


\medskip
It is worth
mentioning
that, using Theorem A, we can show that
when
a minimal system $(X,T)$
is an open extension of  its maximal distal factor, then for each $d\in \N$,
the $d$-step topological characteristic factor of $X$ is $X_{d-1}=X/\RP^{[d-1]}$
(Theorem~ \ref{main-distal}).
This fact emphasises the analogy with the ergodic situation.
We
point out that the number $d-1$ is the sharp result, since $T^{n_i}x\ra x, \ldots, T^{(d-1)n_i}x\ra x$ and
$T^{dn_i}x\ra y$ for some $y$ implies $(x,y)\in \RP^{[d-1]}$ (see Lemma \ref{sharpnumber}).
Moreover, since
in the structure of a general minimal system $(X,T)$
there may appear proximal extensions,
in some sense, Theorem A is the best result we can expect,
meaning that we need the almost one to one modifications (see the example in \cite{G94}).

\medskip

Assume that $(X,T)$ is minimal and $x\in X$. The orbit closure of $(x,\ldots,x)$
under the action $\langle\sigma_d, \tau_d\rangle$ is denoted by $N_d(X,T,x)$, where
$$\tau_d(T)=T\times T^2\times \ldots \times T^d, \ \text{and}\ \sigma_d(T)=T^{(d)}=T\times T\times \ldots \times T.$$
It is easy to see that $N_d(X,T,x)$ is independent of $x$, which will be denoted by $N_d(X,T)$ or $N_d(T)$ or $N_d(X)$.
A basic result proved by Glasner \cite{G94} is that
$N_d(X)$ is minimal under the $\langle\sigma_d, \tau_d\rangle$ action.
We note that the minimality of $N_d(X)$ implies van der Wareden's theorem, see \cite[Theorem 1.56]{G-book}.

\medskip

We further investigate the dynamical properties of $N_d(X)$,
and one consequence of this study, namely Theorem C, will be
used in proving Theorems D and E. 

\medskip
\noindent{\bf Theorem B:} \
{\it  Let $(X,T)$ be a minimal system and $d\in\N$. Then
the maximal equicontinuous factor of $(N_d(X,T), \langle\sigma_d, \tau_d\rangle)$
is $(N_d(X_1,T), \langle\sigma_d, \tau_d\rangle)$, where as above $X_1$ is the maximal equicontinuous factor of $(X,T)$.}

\medskip
In fact, we will show more, see Theorems \ref{c-1-1} and \ref{c-1-1-distal}. Namely, it is proved that for each $d,k\in\N$,
the
maximal $k$-step
pro-nilfactor of $N_d$ is the same
as the one of $N_d(X_\infty)$, and 
that
there is a dense $G_\delta$ set $\Omega\subset X$
such that for each $x\in\Omega$, the maximal $k$-step pro-nilfactor of
$\overline{\O}(x^{(d)}, \tau_d)$ is the same
as the one
of $\overline{\O}((\pi_\infty x)^{(d)}, \tau_d)$, where $\pi_\infty:X\ra X_\infty$ is the
canonical
factor map.
Applying Theorem B we have

\medskip

\noindent{\bf Theorem C:}\
{\it Let $(X,T)$ be a minimal system and $k\ge 2$.
Then $(X,T^k)$ is minimal if and only if $N_d(X, T)=N_d(X, T^k)$ for each $d\in\N$.}

\medskip
As applications
of these results
we have an affirmative answer to Question 2
and state it in its equivalence form:

\medskip
\noindent{\bf Theorem D:}
{\it Let $(X,T^k)$ be minimal for some $k\ge 2$ and $d\in\N$. Then for any $d\in \N$ and any $0\le j<k$ there is a sequence
$\{n_i\}$ with $n_i\equiv j\ (\text{mod}\ k)$ such that $T^{n_i}x\rightarrow x, T^{2n_i}x\rightarrow x, \ldots, T^{dn_i}x\rightarrow x,$ for
$x$ in
a dense $G_\d$ subset of $X$.}

\medskip
It is shown in \cite{F77} that if $r\in \N$ and $\N=N_1\cup \ldots \cup N_r$,
then
there is $i$ such that $N_i$ contains a piece-wise syndetic set.
Then the orbit closure of the
characteristic
function $1_{N_i}\in \{0,1\}^{\N}$ contains a point $\omega$ which is not $(0,0,\ldots)$, and
such that
each word appearing in $\omega$ appears syndetically.
We say that the partition is an {\it irreducible
of type $k$} ($k\ge 2$) if each word appearing in $\omega$ also appears in the position $nk+1$
for some $n\in\N$.
Using this terminology Theorem D can be
restated as follows:

If $\N=N_1\cup \ldots \cup N_r$
is an irreducible partition of type $k$, then there is
an
$i$ such that for each $l\in\N$ and $0\le j<k$ there are $a,b\in\N$
with $a,a+b,\ldots,a+lb\in N_i$, and $b\equiv j\ (\text{mod}\ k)$.

\medskip

The following is an affirmative answer to Question 3 for polynomials of degree 2.

\medskip
\noindent{\bf Theorem E:}
{\it Let $(X,T)$ be a totally minimal system, and $P(n)=an^2+bn+c$ be an integral polynomial with
$a\not=0$. Then there is a dense $G_\delta$ subset $\Omega$ of $X$ such that
for
every
$x\in \Omega$, the
set $\{T^{P(n)}(x):n\in\Z\}$ is dense in $X$.}

\medskip

Finally, we confirm Conjecture 1.1 in \cite{GHSY}. In fact we show more, namely:

\medskip
\noindent{\bf Theorem F:}
{\it  Let $(X,T)$ be a minimal system which is an open extension
of its maximal distal factor, then for any $d\in\N$, $\AP^{[d]}=\RP^{[d]}$.}


\subsection{The main ideas of the proofs}\

\medskip
We start from the proof of Theorem A.
In a deep
sense Theorem A is similar to the ergodic case: one wants to reduce
questions regarding
the $\tau_d$-action from
a general system (meaning ergodic m.p.t. or minimal t.d.s.) to
the same questions in a pro-nilsystem.

Now unlike the ergodic situation where the structure theorem for ergodic systems
involves only two kinds of extensions, namely isometric and weakly mixing extensions,
in the structure theorem of the general minimal system,
see \cite{EGS} and \cite{V83},
proximal extensions (which in general need not be open) necessarily appear.
This fact causes great difficulties when one wants to apply this structure theorem.
To overcome these difficulties, we slightly modify the structure by introducing various kinds of
auxiliary
extensions.
If all we need is opennes of the maps then
the price we pay is
the introduction of an auxiliary
almost one to one modification of the original extension.
Fortunately such modification exists in a canonical way by the classical
construction
called the {\em O-diagram}.

\medskip
The second
difficulty
we face is
more
essential,
namely: there are no tools like
Gowers-Host-Kra seminorms or the van der Corput lemma in topological dynamics,
whereas these tools are
frequently used in \cite{HK05}.
The two main ingredients we use
instead
are:
a simplified version of a construction
used by Glasner \cite{G94}, and the
essential
use of the characterizations of the regionally proximal relation of order $d$
obtained
by Huang-Shao-Ye \cite{HSY16},
which involves Poincar\'e and Birkhoff sets, introduced by Furstenberg \cite{F77},
and their higher order versions by Frantzikinakis, Lesigne and Wierdl \cite{FLW}. 

\medskip

Once we have these tools, the real difficulty is
in checking
one
specific
 condition in the construction. Namely, we need to verify that if
$O$ is a relatively open subset of $N_d$, then the orbit closure of
$O$ under the $\tau_d$-action is ``saturated" in the sense
that if it
contains
some point in a fibre, then
it already contains the full fibre
(see Lemma~ \ref{lem-Key}).
In trying to do
this for a while, we realized that
this can be done only
when all the generators
of the group $\langle\tau_d,\sigma_d\rangle$ are used.
For example, when $d=3$,
and
$\langle\sigma_3, \tau_3\rangle=\langle T\times T\times T, T\times T^2\times T^3\rangle$,
in the proof we have to use the generators
$$\{\id\times T\times T^2, \sigma_3\},\{T\times \id\times T^{-1}, \sigma_3\}, \text{and}\ \{T^2\times T\times \id, \sigma_3\}.$$
In previous works
we never
expected
that the last two generators may become
useful.
The idea to use all
the
generators is crucial in the current paper, and we also
believe that this phenomenon
will become useful in other settings as well.

\medskip
Now we turn to the proof of Theorem B. By Theorem A, it is relatively easy to see that the maximal equicontinuous factor of $N_d(X)$
is the same as the one of $N_d(X_\infty)$. So, it remains to
show this for the higher order pro-nilsystems.
This is done
by using Glasner's result (Lemma \ref{qiu}), a recent result proved by Qiu and Zhao (Lemma~ \ref{eli-thm}), and Lemma \ref{sharpnumber}.

Theorem C is
obtained as
an application of Theorem B,
together with a result for equicontinuous systems (Proposition \ref{t-n}), and a
discussion of the decomposition for minimal systems under the iterations of $T$.

\medskip
With the preparations we have
outlined so far
it is not hard to get Theorems D, E and F,
except
that
we need to develope
a tool
in order to switch results for $N_d$ under the
$\langle\tau_d,\sigma_d\rangle$ action to
$N_d$ under the $\tau_d$ action.
We provide such a tool in Lemma \ref{elibenjy}.

\medskip
To finish, we note that Theorem A opens a window
for the possibility to explore
some further
natural questions which we will discus in the last section of this paper.

\subsection{The organization of the paper}\

\medskip
In Section 2, we
present some
preliminaries.
In Section 3 we provide the two main tools for the proof of Theorem A.
Section 4 is devoted to proving Theorem A.
The proofs for Theorems B and C
are expounded
in Section 5.
In Section 6 we give some applications
of Theorems B and C;
 more specifically we prove there
Theorems D, E and F. Some open questions are discussed
in the final section.

\bigskip
\noindent {\bf Acknowledgement:} We would like to thank
V. Bergelson, N. Frantzikinakis and J.-P. Thouvenot for suggesting some of
the questions we discuss in this work.

\section{Preliminaries}
In this section we give some necessary notions and some known facts which
we will use later.

\subsection{General topological dynamics}\
\medskip

A {\em topological dynamical system} (t.d.s for short) is a triple
$\X=(X, \Gamma, \Pi)$, where $X$ is a compact Hausdorff space, $\Gamma$ is a
Hausdorff topological group and $\Pi: \Gamma\times X\rightarrow X$ is a
continuous map such that $\Pi(e,x)=x$ and
$\Pi(s,\Pi(t,x))=\Pi(st,x)$, where $e$ is the unit of $\Gamma$, $s,t\in \Gamma$ and $x\in X$. We shall fix $\Gamma$ and suppress the
action symbol. Thus for $x\in X$ and $t\in \Gamma$, write $tx$ for $\Pi(t,x)$.


In the paper, we always assume that $X$ is a compact metric space with metric $\rho(\cdot, \cdot)$, and $\Gamma$ is a discrete countable group.
When $\Gamma=\Z$, we will write the t.d.s.
as $(X,T)$ with $T$ being a homeomorphism on $X$. So in this notation $\Gamma=\{T^n: n\in \Z\}$.

\medskip

Let $(X,\Gamma)$ be a t.d.s. and $x\in X$. Then $\O(x,\Gamma)=\{gx: g\in \Gamma\}$ denotes the
{\em orbit} of $x$, which is also denoted by $\Gamma x$. We usually denote the closure of $\O(x,\Gamma )$ by
$\overline{\O}(x,\Gamma)$, or $\overline{ \Gamma x}$.
 Let $A\subseteq X$, the  orbit of $A$ is given by $\O(A,\Gamma)=\{tx: x\in A,t\in \Gamma\}$, and $\overline{\O}(A,\Gamma)= \overline{\O(A,\Gamma)}$.

\medskip

A subset
$A\subseteq X$ is called {\em invariant} (or {$\Gamma$-invariant}) if $g a\subseteq A$ for all
$a\in A$ and $g\in \Gamma$. When $Y\subseteq X$ is a closed and
invariant subset of the system $(X, \Gamma)$, we say that the system
$(Y, \Gamma)$ is a {\em subsystem} of $(X, \Gamma)$. If $(X, \Gamma)$ and $(Y,
\Gamma)$ are two t.d.s., their {\em product system} is the
system $(X \times Y, \Gamma)$, where $g(x, y) = (gx, gy)$ for any $g\in \Gamma$ and $x,y\in X$.

\medskip

A t.d.s. $(X,\Gamma)$ is called {\em minimal} if $X$ contains no proper non-empty
closed invariant subsets. It is easy to verify that a t.d.s. is
minimal if and only if every orbit is dense. In a general system $(X,\Gamma)$ we say that a point $x\in X$ is minimal if $(\overline{\O}(x,\Gamma),\Gamma)$ is minimal.


\medskip

A {\it factor map} $\pi: X\rightarrow Y$ between the t.d.s. $(X,\Gamma)$
and $(Y, \Gamma)$ is a continuous onto map which intertwines the
actions; we say that $(Y, \Gamma)$ is a {\it factor} of $(X,\Gamma)$ and
that $(X, \Gamma)$ is an {\it extension} of $(Y, \Gamma)$.
The systems are said to be {\it isomorphic} if $\pi$ is bijective. Let $\pi: (X,\Gamma)\rightarrow (Y,\Gamma)$ be a factor map. Then
$$R_\pi=\{(x_1,x_2):\pi(x_1)=\pi(x_2)\}$$
is a closed invariant equivalence relation, and $Y=X/ R_\pi$.

\medskip

Let $X, Y$ be compact metric spaces and $T:X \to Y$ be a map.  For $n \geq 2$ let
$T^{(n)}=T\times \ldots \times T \ \text{($n$ times)}: X^n\rightarrow Y^n.$ Thus
we write $(X^n,T^{(n)})$ for the $n$-fold product system $(X\times	\ldots \times X,T\times \ldots \times T)$.
The diagonal of $X^n$ is $$\Delta_n(X)=\{(x,\ldots,x)\in X^n: x\in X\}.$$
When $n=2$ we write	$\Delta(X)=\Delta_2(X)$.

\subsection{Proximal, distal and regionally proximal relations}\
\medskip

Let $(X,\Gamma)$ be a topological dynamical system. Fix $(x,y)\in X^2$. It is a {\it proximal}
pair if $\liminf_{g\in \Gamma} \rho (gx, gy)=0$; it is a {\it
distal} pair if it is not proximal. Denote by ${\bf P}(X,\Gamma)$ the set of proximal pairs of $(X,\Gamma)$. It is also called the {\em proximal relation}. A well known theorem of Auslander-Ellis states that for a  t.d.s. $(X,\Gamma)$, any $x\in X$ is proximal to some minimal point $y$ in $\overline{\O}(x,\Gamma)$.

\medskip

A topological dynamical system $(X,\Gamma)$ is {\it equicontinuous} if for every $\ep>0$ there
exists $\d>0$ such that $\rho (x,y)< \d$ implies $\rho (gx,gy)<\ep$
for every $g\in \Gamma$. It is {\it distal} if ${\bf P}(X,\Gamma)= \D(X)$. Any equicontinuous system is distal.

Let $S_{distal}$ ($S_{eq}$ respectively)  be the smallest closed invariant equivalence relations $S$ on
$X$ for which the factor $X/S$ is a distal (equicontinuous respectively) system. The equivalence relation $S_{distal}$ ($S_{eq}$) is called the {\em {distal}
( equicontinuous) structure relation} of $X$.  It is well known that $S_{distal}$ is the smallest closed invariant equivalence relation on $X$ which includes ${\bf P}(X)$ and $X/S_{distal}$ is the largest distal factor of $X$.

\medskip
In the study of t.d.s., one of the first problems was to characterize $S_{eq}$.
A natural candidate for $S_{eq}$ is
the so-called {\it regionally proximal relation} $\RP(X)$ introduced by Ellis and Gottschalk \cite{EG60}.
Let $(X,\Gamma)$ be a minimal system. The regionally
proximal relation $\RP(X,\Gamma)$ is defined as: $(x,y)\in \RP(X,\Gamma)$ if
for any $\ep>0$ and for any neighborhood $U\times V$
of $(x,y)$ there are $(x',y')\in U\times V$
and $g\in \Gamma$ such that $\rho(gx',gy')<\ep$.
It is well known that $\RP(X,\Gamma)$ is
an invariant closed relation  and this relation
defines the {\em maximal equicontinuous factor} $X_{eq}=X/S_{eq}$ of
$(X,\Gamma)$ (see e.g. \cite[Chapter V]{Vr}).


\medskip

It is a difficult problem to find conditions
under which $\RP(X)$ is an equivalence relation (i.e. $\RP(X)=S_{eq}$). Starting with Veech \cite{V68}, various
authors, including Peterson,
Ellis-Keynes \cite{E-K}, McMahon \cite{Mc78} and Bronstein \cite{Br}, came
up with various sufficient conditions for $\RP(X)$ to be an equivalence relation. What we will use is the following result.


\begin{thm}\label{thm-RP}
Let $(X,\Gamma)$ be a minimal t.d.s., where $\Gamma$ is an amenable group. Then we have the following statements.
\begin{enumerate}
  \item $\RP(X)$ is an invariant closed equivalence relation which induces
the maximal equicontinuous factor $X_{eq}$.
  \item If $(Y,\Gamma)$ is a factor of $(X,\Gamma)$ and we
 let $\pi: X\rightarrow Y$ be a factor map, then $$\pi\times \pi(\RP(X,\Gamma))=\RP(Y,\Gamma).$$
\end{enumerate}
\end{thm}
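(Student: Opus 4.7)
\medskip
The plan is to treat the two parts separately, building (2) on top of (1). For part (1), reflexivity, symmetry, closedness and invariance of $\RP(X)$ are immediate from the defining $\epsilon$–neighborhood condition (invariance uses the fact that acting by a group element $g$ preserves ``being regionally proximal'' because one may replace the approximating group element $g'$ by $g^{-1}g'g$). The substantive content is therefore the transitivity of $\RP(X)$, together with the identification $\RP(X)=S_{eq}$.

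\medskip
To establish transitivity under the amenability hypothesis, my plan is to exploit the existence of a $\Gamma$-invariant Borel probability measure $\mu$ on $X$ (guaranteed by amenability) and pass to the Ellis/enveloping semigroup $E(X,\Gamma)$: use a F\o lner averaging argument to produce, for any $(x,y),(y,z)\in\RP(X)$ and any $\ep>0$, an element $p\in E(X,\Gamma)$ which simultaneously contracts approximating pairs for $(x,y)$ and $(y,z)$ to the same point. The key step is then a diagonal compactness argument showing that if $(x,y),(y,z)\in\RP(X)$ then for every neighbourhood $U\times W$ of $(x,z)$ and every $\ep$ one can produce $(x',z')\in U\times W$ and $g\in\Gamma$ with $\rho(gx',gz')<\ep$; this is where amenability is used essentially, since without it the independent approximations for the two pairs cannot be combined.

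\medskip
Once $\RP(X)$ is known to be a closed invariant equivalence relation, the quotient $X/\RP(X)$ is a minimal t.d.s.\ on which the induced regionally proximal relation is trivial. To conclude that $X/\RP(X)$ is equicontinuous, I would invoke the standard fact that a minimal t.d.s.\ with trivial regionally proximal relation is equicontinuous (using compactness plus minimality, via the Ellis group). For the maximality, note that for any equicontinuous factor $\sigma:X\to Y$ one has $\RP(Y)=\D(Y)$, and the easy half of (2) (see below) gives $\sigma\times\sigma(\RP(X))\subseteq\RP(Y)=\D(Y)$, hence $\RP(X)\subseteq R_\sigma$; combined with $S_{eq}\subseteq\RP(X)$ (immediate from the definition applied to the quotient) this yields $\RP(X)=S_{eq}$ and identifies $X/\RP(X)$ with $X_{eq}$.

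\medskip
For part (2), the inclusion $\pi\times\pi(\RP(X,\Gamma))\subseteq\RP(Y,\Gamma)$ follows directly from the uniform continuity of $\pi$ on the compact space $X$: any $\ep$-contracting pair in $X$ projects to a $\d(\ep)$-contracting pair in $Y$. For the reverse inclusion, my plan is to use the structure provided by (1). The factor map $\pi$ induces a factor map $\bar\pi:X_{eq}\to Y_{eq}$ between minimal equicontinuous (hence homogeneous) systems, which is automatically a fibre-preserving surjection. Given $(y_1,y_2)\in\RP(Y)$ and any $x_1\in\pi^{-1}(y_1)$, the class of $x_1$ in $X_{eq}$ maps under $\bar\pi$ to the class of $y_1=y_2$ in $Y_{eq}$; by the homogeneity of the fibres of $\bar\pi$ one can then select $x_2\in\pi^{-1}(y_2)$ in the same $\RP(X)$-class as $x_1$, yielding $(x_1,x_2)\in\RP(X)$ with $\pi\times\pi(x_1,x_2)=(y_1,y_2)$. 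The main obstacle is the transitivity argument in part (1); part (2) is then essentially a bookkeeping exercise using the universal property of the maximal equicontinuous factor.
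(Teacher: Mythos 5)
The paper does not prove Theorem~\ref{thm-RP}; it is quoted as a classical result (the identification with $X_{eq}$ attributed to de Vries \cite{Vr}, the equivalence-relation part to the Veech--Ellis--Keynes--McMahon--Bronstein line of work), so there is no internal argument to compare yours against. Assessed on its own, your outline puts the weight in the right places --- transitivity of $\RP(X)$ for (1) and the lifting half of (2) --- but at precisely those two points it asserts rather than argues. For transitivity, ``F\o lner averaging, then a diagonal compactness argument'' does not engage the real obstruction: given $(x,y),(y,z)\in\RP(X)$, the definition supplies, for each $\ep$, a pair near $(x,y)$ contracted by some $g_1$ and a pair near $(y,z)$ contracted by some $g_2$, and the middle approximants and the two group elements are entirely unrelated. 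Even the strongest soft input available --- the fact, used elsewhere in the paper, that for $(x,y)\in\RP(X)$ and any neighbourhoods $U$ of $(x,y)$ and $W$ of $\Delta_X$ the set $N(U,W)$ is thickly syndetic, so that the filter $\F_{ts}$ lets you take $g_1=g_2$ --- still leaves the two middle approximants unmatched, and closing that gap is exactly where the content of McMahon's theorem (or the enveloping-semigroup argument) lives. None of that appears in the sketch.

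For (2), the reverse inclusion reduces, as you say, to: given $(y_1,y_2)$ with the same image in $Y_{eq}$ and any $x_1\in\pi^{-1}(y_1)$, produce $x_2\in\pi^{-1}(y_2)$ in the same $\RP(X)$-class as $x_1$; equivalently, that the canonical map $X\to X_{eq}\times_{Y_{eq}}Y$ is onto, i.e.\ that this fibered product is minimal. ``Homogeneity of the fibres of $\bar\pi:X_{eq}\to Y_{eq}$'' is a property of $\bar\pi$ alone and says nothing about how the fibre $\pi^{-1}(y_2)$ meets the fibres of $X\to X_{eq}$; the missing ingredient is that equicontinuous extensions of minimal flows are relatively incontractible (RIC), which is what forces the fibered product to be minimal. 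That is a genuine theorem, and it is exactly what is hidden in the phrase ``one can then select $x_2$.'' Both gaps are substantive, not cosmetic.
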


An extension $\pi:X\ra Y$ is said to be
{\it proximal} if $R_\pi\subset {\bf P}(X)$. The following lemma is well known.

\begin{lem}\label{proximal-prod}
Let $\pi_i: (X_i,\Gamma)\ra (Y_i,\Gamma)$ be proximal extensions, $1\le i\le n.$ Then $$\prod_{i=1}^n\pi_i=\pi_1\times \ldots \times \pi_n: (\prod_{i=1}^nX_i,\Gamma)\rightarrow  (\prod_{i=1}^nY_i,\Gamma)$$
is proximal.
\end{lem}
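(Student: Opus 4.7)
The plan is to prove this by induction on $n$. The case $n=1$ is immediate, and since $\prod_{i=1}^{n}\pi_i = (\prod_{i=1}^{n-1}\pi_i)\times \pi_n$, the induction step reduces to showing the case $n=2$: that the product of two proximal extensions is again a proximal extension. So the real content is to fix proximal extensions $\pi_1:X_1\to Y_1$ and $\pi_2:X_2\to Y_2$ and to verify, for any $(x_1,x_2),(x_1',x_2')\in X_1\times X_2$ with $\pi_i(x_i)=\pi_i(x_i')$ for $i=1,2$, that $((x_1,x_2),(x_1',x_2'))\in \mathbf{P}(X_1\times X_2)$.

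The strategy is a two-stage collapse: first use the $\pi_1$-proximality of $(x_1,x_1')$ to bring the first coordinates together, observe that the resulting second-coordinate limits still lie in $R_{\pi_2}$, then use $\pi_2$-proximality to bring those together, and finally compose. Concretely, since $(x_1,x_1')\in R_{\pi_1}\subset \mathbf{P}(X_1)$, choose $g_k\in\Gamma$ with $\rho_1(g_kx_1,g_kx_1')\to 0$; by compactness of $X_1\times X_2\times X_2$, pass to a subsequence so that $g_kx_1\to z$, $g_kx_1'\to z$, $g_kx_2\to u$, $g_kx_2'\to v$. Equivariance of $\pi_2$ together with $\pi_2(x_2)=\pi_2(x_2')$ gives $\pi_2(g_kx_2)=\pi_2(g_kx_2')$ for every $k$, so passing to the limit $\pi_2(u)=\pi_2(v)$, i.e. $(u,v)\in R_{\pi_2}\subset \mathbf{P}(X_2)$. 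Pick $h_j\in\Gamma$ with $\rho_2(h_ju,h_jv)\to 0$ and refine so that $h_ju,h_jv\to w_2$ and $h_jz\to w_1$. A diagonal choice $k=k(j)$ growing fast enough then yields both $h_jg_{k(j)}(x_1,x_2)\to (w_1,w_2)$ and $h_jg_{k(j)}(x_1',x_2')\to (w_1,w_2)$, exhibiting proximality.

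I do not anticipate a real obstacle: the argument uses only compactness, continuity, and equivariance, and the key observation — that after the first collapse the limiting second-coordinate pair still lies in $R_{\pi_2}$ — is immediate. The only point requiring mild care is the diagonalization in the last step, which is routine. Equivalently, the whole proof can be recast in one line via the Ellis semigroup: if $p\in E(X_1\times X_2,\Gamma)$ is a limit point of $(g_k)$ and $q$ a limit point of $(h_j)$, then $qp$ identifies the two points, so $((x_1,x_2),(x_1',x_2'))$ is proximal.
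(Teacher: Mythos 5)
Your argument is correct, and both versions you sketch (the concrete two-stage collapse with a diagonalization, and the Ellis-semigroup one-liner) are sound. Note that the paper itself gives no proof of this lemma --- it is cited as ``well known'' --- so there is nothing to compare against; your write-up simply supplies the standard argument that the paper takes for granted.

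Two small remarks on making the write-up airtight. First, after choosing $g_k$ with $\rho_1(g_kx_1,g_kx_1')\to 0$ and passing to a subsequence so that $g_kx_1\to z$, it is worth spelling out that the triangle inequality then forces $g_kx_1'\to z$ as well, which is what lets you take a single limit $z$ for both first coordinates. Second, in the diagonalization step the estimate you need is
$$\rho_2\bigl(h_jg_{k(j)}x_2,\,h_jg_{k(j)}x_2'\bigr)\le \rho_2\bigl(h_jg_{k(j)}x_2,h_ju\bigr)+\rho_2(h_ju,h_jv)+\rho_2\bigl(h_jv,h_jg_{k(j)}x_2'\bigr),$$
where the middle term tends to $0$ by the choice of $h_j$ and the outer two can be made $<1/j$ by continuity of the fixed homeomorphism $h_j$ once $k(j)$ is large; this is exactly what ``growing fast enough'' should mean, and it is worth stating so the reader does not worry about uniformity.
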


\subsection{$\RP^{[d]}$ and $\AP^{[d]}$ }\
\medskip

For a t.d.s. $(X,T)$, Host, Kra and Maass \cite{HKM} introduced the following definition.
If ${\bf n} = (n_1,\ldots, n_d)\in \Z^d$ and $\ep\in \{0,1\}^d$, we
define
$${\bf n}\cdot \ep = \sum_{i=1}^d n_i\ep_i .$$

\begin{de}
Let $(X, T)$ be a t.d.s. and let $d\in \N$. The points $x, y \in X$ are
said to be {\em regionally proximal of order $d$} if for any $\d  >
0$, there exist $x', y'\in X$ and a vector ${\bf n} = (n_1,\ldots ,
n_d)\in\Z^d$ such that $\rho (x, x') < \d, \rho (y, y') <\d$, and $$
\rho (T^{{\bf n}\cdot \ep}x', T^{{\bf n}\cdot \ep}y') < \d\
\text{for any $\ep\in \{0,1\}^d\setminus \{\bf 0\}$},$$
where ${\bf 0}=(0,0,\ldots,0)\in \{0,1\}^d$. The set of regionally proximal pairs of
order $d$ is denoted by $\RP^{[d]}$ (or by $\RP^{[d]}(X,T)$ in case of
ambiguity), and is called {\em the regionally proximal relation of
order $d$}. 
\end{de}

Similarly we can define $\RP^{[d]}(X,\Gamma)$ for a system $(X,\Gamma)$ with abelian group $\Gamma$.
We note that $\RP^{[1]}=\RP$. The notion of the regionally proximal relation of
order $d$ was introduced by Host, Kra and Maass in \cite{HKM}.
It is easy to see that $\RP^{[d]}$ is a closed and invariant
relation. Observe that
\begin{equation*}
    {\bf P}(X)\subset  \ldots \subset \RP^{[d+1]}\subset
    \RP^{[d]}\subset \ldots \subset \RP^{[2]}\subset \RP^{[1]}=\RP.
\end{equation*}

\begin{de}
Let $G$ be a group. For $A,B \subset G$, we write $[A,B]$ for the subgroup spanned by $\{[a, b] =aba^{-1}b^{-1}: a \in A, b\in B\}$.
The commutator subgroups $G_j$, $j\ge 1$, are defined inductively by
setting $G_1 = G$ and $G_{j+1} = [G_j ,G]$. Let $d \ge 1$ be an
integer. We say that $G$ is {\em $d$-step nilpotent} if $G_{d+1}$ is
the trivial subgroup.

Let $G$ be a $d$-step nilpotent Lie group and $\Lambda$ be a discrete
cocompact subgroup of $G$. The compact manifold $X = G/\Lambda$ is
called a {\em $d$-step nilmanifold}. The group $G$ acts on $X$ by
left translations and we write this action as $(g, x)\mapsto gx$.
The Haar measure $\mu$ of $X$ is the unique probability measure on
$X$ invariant under this action. Let $\tau\in G$ and $T$ be the
transformation $x\mapsto \tau x$ of $X$. Then $(X, \mu, T)$ is
called a {\em $d$-step nilsystem}. An inverse limit of
$d$-step nilsystems is called a $d$-step {\it pro-nilsystem} or {\em a system of order $d$}.
\end{de}

Host, Kra and Maass \cite{HKM} showed that if a system is minimal and
distal then $\RP^{[d]}$ is an equivalence relation, and a very deep result stating that
$(X/\RP^{[d]},T)$ is the maximal $d$-step pro-nilfactor of the system. 
Shao and Ye \cite{SY} showed that all these results in fact hold for arbitrarily minimal systems
of abelian group actions. See Glasner, Gutman and Ye \cite{GGY18} for similar results regarding general group actions.

The following theorems proved in \cite{HKM} (for minimal distal systems) and
in \cite{SY} (for general minimal systems) tell us conditions under which
the pair $(x,y)$ belongs to $\RP^{[d]}$ and the relation between $\RP^{[d]}$ and
$d$-step pro-nilsystems. We state them for $\Z$-actions and they hold for minimal systems under abelian group actions.

\begin{thm}\label{thm-RP-d}
Let $(X, T)$ be a minimal topological dynamical system and let $d\geq 1$ be an integer. Then
\begin{enumerate}

\item $\RP^{[d]}$ is an equivalence relation.

\item $(X,T)$ is a $d$-step pro-nilsystem if and only if $\RP^{[d]}=\Delta_X$.
\end{enumerate}
\end{thm}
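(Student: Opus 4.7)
The plan is to follow the framework of \cite{HKM} and \cite{SY}, organizing the argument around the dynamical parallelepipeds $\Q^{[d]}(X)$ and the associated face group action. Recall that $\Q^{[d]}(X)$ is the closed invariant subset of $X^{2^d}$ obtained as the orbit closure of $\Delta_{2^d}(X)$ under the face group $\mathcal{F}^{[d]}$, the group generated by the transformations $T^{\mathbf{n}\cdot\epsilon}$ (indexed by $\epsilon \in \{0,1\}^d$) with one or more coordinates held fixed. The first technical step is the side-pair characterization: $(x,y) \in \RP^{[d]}$ if and only if the point with $x$ in the first vertex and $y$ in all other $2^d-1$ vertices lies in $\Q^{[d]}(X)$. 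This is essentially unpacking the definition, once one has set up the cube language.

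For part (1), reflexivity and symmetry of $\RP^{[d]}$ are immediate; transitivity is the substantive content. Using the characterization above, transitivity reduces to a minimality assertion for certain faces of $\Q^{[d]}(X)$ under the face group. For minimal distal systems this is done in \cite{HKM} by an induction on $d$, exploiting the fact that distal extensions interact well with the cube structure. To extend to arbitrary minimal systems as in \cite{SY}, one passes to a distal model via the Furstenberg structure theorem and uses a lifting argument through the proximal fibres; at this step one crucially uses that $\Q^{[d]}$ behaves well under proximal extensions after an almost one-to-one modification (the O-diagram device invoked elsewhere in the present paper).

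For part (2), the "only if" direction is a direct computation: in a $d$-step nilsystem $G/\Lambda$, the nilpotency of $G$ of step $d$ forces the $2^d$-th vertex of any nilpotent cube to be determined by the other $2^d-1$; so closeness on all but one vertex propagates to the remaining one, giving $\RP^{[d]}=\Delta_X$, and the general pro-nilsystem case follows because $\RP^{[d]}$ is continuous under inverse limits. For the "if" direction, one must show that $Y:= X/\RP^{[d]}$ is itself a $d$-step pro-nilsystem. Using the Host--Kra--Maass analysis, $Y$ is constructed as a tower of abelian isometric extensions of its maximal equicontinuous factor, with the vanishing of $\RP^{[d]}$ providing precisely the cocycle information needed to make the $k$-th stage a $k$-step nilfactor, and with an induction on $k$ terminating at $k=d$.

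The main obstacle, by a wide margin, is the transitivity of $\RP^{[d]}$ in the non-distal minimal setting of \cite{SY}: the proximal extensions appearing in Ellis' structure theorem do not automatically preserve the geometric properties of $\Q^{[d]}$, so one must carefully check that the face-group minimality passes through these extensions. The identification of $X/\RP^{[d]}$ as a pro-nilsystem in part (2) is also technically heavy, requiring the full Host--Kra structural machinery transported to the topological category. In the present paper, since Theorem \ref{thm-RP-d} is invoked only as a black box from \cite{HKM, SY, GGY18}, no new proof is needed here; the sketch above merely explains how to re-derive it if desired.
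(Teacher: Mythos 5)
You correctly observe that the paper does not prove Theorem~\ref{thm-RP-d} but cites it from \cite{HKM} (for the distal case) and \cite{SY} (for general minimal systems); your closing paragraph rightly treats it as a black box, which is all the paper requires.

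That said, the sketch you give of the \cite{SY} extension does not match the actual argument, and if taken literally would run into trouble. You propose to ``pass to a distal model via the Furstenberg structure theorem'' and to control proximal fibres by an ``O-diagram / almost one-to-one modification'' so that $\Q^{[d]}$ behaves well under proximal extensions. Two issues here. First, the Furstenberg structure theorem applies to distal minimal systems; for arbitrary minimal systems one needs the PI-tower structure theorem of Ellis--Glasner--Shapiro/Veech, and the proximal steps there are precisely what break the naive inductive argument, so this is not a mere change of name. Second, and more substantively, Shao--Ye do not use the O-diagram device at all: their proof proceeds by working directly in the enveloping (Ellis) semigroup, analyzing idempotents and the Ellis group to establish the requisite minimality of face-group actions on the cube spaces for a general minimal system. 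The O-diagram / almost one-to-one replacement is a technique introduced \emph{in this paper} to handle a different problem (Theorem~A, saturation along $\tau_d$); it is not how the equivalence of $\RP^{[d]}$ is established, and it is not obvious that such a modification would give you what you need there, since replacing $X$ by an almost one-to-one extension $X^*$ changes $\Q^{[d]}$ in a way that has to be tracked carefully. So the route you sketch for part~(1) in the non-distal case is a genuinely different proposal, and it has a real gap: you would need to show that the face-group minimality on $\Q^{[d]}(X^*)$ descends correctly to $\Q^{[d]}(X)$ under the almost one-to-one maps, which is not addressed. The remainder of the sketch (the side-pair characterization, the distal induction from \cite{HKM}, the inverse-limit argument and the $X/\RP^{[d]}$ identification in part~(2)) is in line with the cited references.
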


\begin{thm}\label{lift}\label{th3}
Let $\pi: (X,T)\rightarrow (Y,S)$ be a factor map between minimal topological dynamical systems
and let $d\geq 1$ be an integer. Then
\begin{enumerate}
  \item $\pi\times \pi (\RP^{[d]}(X,T))=\RP^{[d]}(Y,S)$.
  \item $(Y,T)$ is a $d$-step pro-nilsystem if and only if $\RP^{[d]}(X,T)\subset R_\pi$.
\end{enumerate}
In particular, the quotient of $(X,T)$ under $\RP^{[d]}(X,T)$ is the
maximal $d$-step pro-nilfactor of $X$ (i.e. the maximal factor which is
$d$-step pro-nilsystem).
\end{thm}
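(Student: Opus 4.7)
The plan is to prove part (1) first, since both part (2) and the ``in particular'' statement follow easily from it combined with Theorem \ref{thm-RP-d}.

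For the easy inclusion $\pi\times\pi(\RP^{[d]}(X,T))\subseteq \RP^{[d]}(Y,T)$ of part (1): given $(x,y)\in \RP^{[d]}(X,T)$ and $\delta>0$, uniform continuity of $\pi$ on the compact metric space $X$ provides $\eta>0$ so that $\rho_X<\eta$ implies $\rho_Y\circ(\pi\times\pi)<\delta$. Applying the definition of $\RP^{[d]}$ in $X$ with tolerance $\min(\eta,\delta)$ yields points $x',y'\in X$ near $x,y$ and a vector $\mathbf{n}\in\Z^d$ whose pushforwards $\pi(x'),\pi(y')$ verify the $\RP^{[d]}$ condition for $(\pi(x),\pi(y))$ in $Y$.

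The hard inclusion $\RP^{[d]}(Y,T)\subseteq \pi\times\pi(\RP^{[d]}(X,T))$ is the main obstacle and requires the parallelepiped machinery that the paper attributes to \cite{HKM,SY}. I would introduce the dynamical parallelepiped $\mathbf{Q}^{[d]}(X)\subseteq X^{2^d}$, defined as the orbit closure of the diagonal under the face group $\mathcal{F}^{[d]}$ generated by the face transformations acting on $X^{2^d}$, and use the characterization that $(x,y)\in \RP^{[d]}(X)$ if and only if the ``edge point'' $(x,y,y,\ldots,y)$ (with $x$ in the $\mathbf{0}$-coordinate and $y$ in the $\mathbf{1}$-coordinate, and $y$ elsewhere at positions that force equality) lies in $\mathbf{Q}^{[d]}(X)$. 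The induced map $\pi^{(2^d)}:\mathbf{Q}^{[d]}(X)\to \mathbf{Q}^{[d]}(Y)$ is a factor map of systems that are minimal under $\mathcal{F}^{[d]}$ by the SY/HKM minimality theorem for the face group action on $\mathbf{Q}^{[d]}$. Given $(u,v)\in \RP^{[d]}(Y)$, lift the corresponding edge point to a point in $\mathbf{Q}^{[d]}(X)$ via surjectivity of $\pi^{(2^d)}$, and then use the face-group minimality in $\mathbf{Q}^{[d]}(X)$ together with an Auslander--Ellis type proximality argument on the first coordinate to adjust the lift so that its $\mathbf{0}$-coordinate is exactly $x$ with $\pi(x)=u$; the $\mathbf{1}$-coordinate $y$ then satisfies $\pi(y)=v$, and by the edge-characterization $(x,y)\in \RP^{[d]}(X)$.

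Part (2) is then a direct corollary. If $\RP^{[d]}(X)\subseteq R_\pi$, then by part (1) $\RP^{[d]}(Y)=\pi\times\pi(\RP^{[d]}(X))\subseteq \pi\times\pi(R_\pi)=\Delta_Y$, so Theorem \ref{thm-RP-d}(2) forces $(Y,T)$ to be a $d$-step pro-nilsystem. Conversely, if $(Y,T)$ is a $d$-step pro-nilsystem then $\RP^{[d]}(Y)=\Delta_Y$ by Theorem \ref{thm-RP-d}(2), so for any $(x,y)\in \RP^{[d]}(X)$ we have $(\pi(x),\pi(y))\in \Delta_Y$, i.e.\ $(x,y)\in R_\pi$.

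For the ``in particular'' clause: by Theorem \ref{thm-RP-d}(1), $\RP^{[d]}(X)$ is a closed invariant equivalence relation, so the quotient $(X/\RP^{[d]},T)$ is a well-defined minimal system with factor map $q$. Since $\RP^{[d]}(X)=R_q$, part (2) shows $X/\RP^{[d]}$ is a $d$-step pro-nilsystem. Conversely, if $\pi:X\to Y$ is any $d$-step pro-nilfactor, part (2) gives $\RP^{[d]}(X)\subseteq R_\pi$, so $\pi$ factors through $q$; hence $X/\RP^{[d]}$ is maximal. The principal difficulty throughout is the lift in part (1), which cannot be done by a naive continuity argument and genuinely requires the minimality of the face-group action on $\mathbf{Q}^{[d]}(X)$ for minimal systems of abelian groups.
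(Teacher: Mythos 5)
The paper does not prove this theorem; it is quoted as known, with the distal case attributed to \cite{HKM} and the general case to \cite{SY}. So the comparison is against those references rather than an in-text argument. Your easy inclusion for part (1) is fine, and parts (2) and the ``in particular'' clause do follow from part (1) together with Theorem \ref{thm-RP-d} exactly as you wrote them. The high-level strategy for the hard inclusion---reduce to the dynamical cubes $\mathbf{Q}$, use surjectivity of the product map $\pi^{(\cdot)}$ on cubes and minimality of the face group action---is indeed the approach used in \cite{HKM,SY}.

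However, there is a genuine gap in the hard inclusion, and one dimension error. First, the edge-point characterization is in terms of $\mathbf{Q}^{[d+1]}$, not $\mathbf{Q}^{[d]}$: for $(X,T)$ minimal, $(x,y)\in\RP^{[d]}(X)$ iff the point of $X^{2^{d+1}}$ with $x$ in coordinate $\mathbf{0}$ and $y$ in \emph{every other} coordinate lies in $\mathbf{Q}^{[d+1]}(X)$. Second, and more seriously, surjectivity of $\pi^{(2^{d+1})}:\mathbf{Q}^{[d+1]}(X)\to\mathbf{Q}^{[d+1]}(Y)$ produces a lift $\mathbf{z}$ of $(u,v,\ldots,v)$ whose non-$\mathbf{0}$ coordinates all map to $v$ under $\pi$, but there is no reason for them to be \emph{equal}, which is what the characterization requires. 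The set of genuine ``edge points'' $\{(x,y,\ldots,y)\}$ is not invariant under the face group $\mathcal{F}^{[d+1]}$ (a face transformation applies $T$ to only some of the non-$\mathbf{0}$ coordinates), so face-group minimality on $\mathbf{Q}^{[d+1]}(X)$ does not act on this set, and an Auslander--Ellis proximality argument on the $\mathbf{0}$-coordinate alone does not force the remaining $2^{d+1}-1$ coordinates to collapse to a single $y$. Closing this gap is the actual content of the theorem: in \cite{HKM} it uses, for distal systems, the fact that fixing the $\mathbf{0}$-coordinate leaves a unique minimal set for the ``side'' face subgroup, while in \cite{SY} the general case rests on a careful Ellis-semigroup analysis of $\mathbf{Q}^{[d+1]}(X)$ (and on the already nontrivial fact, Theorem \ref{thm-RP-d}(1), that $\RP^{[d]}$ is an equivalence relation). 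You identify the lift as the main difficulty, which is right, but the mechanism you invoke does not yet deliver it.
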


Let $X_d=X/\RP^{[d]}(X,T)$ and $\pi_d: (X,T)\rightarrow (X_d,T)$ be the factor map. The system $X_0$ is the trivial system and the system $X_1$ is the maximal equicontinuous factor $X_{eq}$.

\medskip

The following lemma is an easy consequence from the definition.
\begin{lem}\label{lem-RP}
Let $(X,T)$ be a t.d.s. and $d\in \N$. Then
$$\RP^{[d]}(X,T)=\RP^{[d]}(X,T^n), \quad \forall n\in \N.$$
\end{lem}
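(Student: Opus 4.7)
The inclusion $\RP^{[d]}(X,T^n)\subseteq \RP^{[d]}(X,T)$ is immediate from the definition: any witness triple $(\mathbf{m},x',y')$ for $(x,y)\in \RP^{[d]}(X,T^n)$ becomes a witness $(n\mathbf{m},x',y')$ for $(x,y)\in \RP^{[d]}(X,T)$, because $(T^n)^{\mathbf{m}\cdot\ep}=T^{(n\mathbf{m})\cdot\ep}$. So the content of the lemma lies in the reverse inclusion $\RP^{[d]}(X,T)\subseteq \RP^{[d]}(X,T^n)$.

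My strategy for the reverse inclusion is to start from a witness vector $\mathbf{n}=(n_1,\ldots,n_d)\in \Z^d$ for $(x,y)\in \RP^{[d]}(X,T)$ and then reduce each coordinate modulo $n$. Write $n_i=q_in+r_i$ with $0\le r_i<n$, and set $Q_\ep:=\sum_i q_i\ep_i$ and $R_\ep:=\sum_i r_i\ep_i$ for each $\ep\in\{0,1\}^d\setminus\{\mathbf{0}\}$, so that $\mathbf{n}\cdot\ep=nQ_\ep+R_\ep$. Since the powers of $T$ commute,
\[
T^{\mathbf{n}\cdot\ep}\;=\;T^{R_\ep}\circ (T^n)^{Q_\ep}.
\]
Consequently an estimate $\rho(T^{\mathbf{n}\cdot\ep}x',T^{\mathbf{n}\cdot\ep}y')<\d_1$ translates, after applying $T^{-R_\ep}$ on both sides, into $\rho((T^n)^{Q_\ep}x',(T^n)^{Q_\ep}y')<\d$, provided $\d_1$ has been chosen small enough to absorb the remainder shift $T^{-R_\ep}$. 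The key observation is that the residues $R_\ep$ lie in the finite, $\mathbf{n}$-independent set $\{0,1,\ldots,(n-1)d\}$.

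To carry this out, given $\d>0$, first use the uniform continuity on the compact space $X$ of each of the finitely many maps $T^{-k}$ for $0\le k\le (n-1)d$ to choose $\d_1\in(0,\d)$ such that $\rho(u,v)<\d_1$ implies $\rho(T^{-k}u,T^{-k}v)<\d$ for every such $k$. Then invoke the hypothesis $(x,y)\in\RP^{[d]}(X,T)$ at tolerance $\d_1$ to obtain $x',y'\in X$ with $\rho(x,x'),\rho(y,y')<\d_1$ and a vector $\mathbf{n}\in\Z^d$ with $\rho(T^{\mathbf{n}\cdot\ep}x',T^{\mathbf{n}\cdot\ep}y')<\d_1$ for all $\ep\neq \mathbf{0}$. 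Perform the Euclidean division above; since every $R_\ep\in\{0,1,\ldots,(n-1)d\}$, the choice of $\d_1$ simultaneously handles all $2^d-1$ residue shifts. Setting $\mathbf{m}:=(q_1,\ldots,q_d)$, we have $\mathbf{m}\cdot\ep=Q_\ep$, so $(\mathbf{m},x',y')$ witnesses $(x,y)\in\RP^{[d]}(X,T^n)$ at tolerance $\d$.

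I expect no genuine obstacle here; the only point requiring a touch of care is the \emph{uniform} choice of $\d_1$ so that every one of the finitely many $T^{-R_\ep}$ expands balls of radius $\d_1$ into balls of radius at most $\d$. This is precisely where compactness of $X$ (and hence uniform continuity of each $T^{-k}$) enters, and it is the reason the authors describe the lemma as an easy consequence of the definition.
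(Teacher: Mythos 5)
Your proof is correct and takes essentially the same approach as the paper: both arguments reduce the witness vector $\mathbf{n}$ coordinate-wise to a multiple of $n$ and then absorb the bounded remainder shifts via uniform continuity of finitely many powers of $T$. The only cosmetic difference is that you round each $n_i$ down (so the correction is applied via $T^{-R_\ep}$ with $0\le R_\ep\le (n-1)d$), whereas the paper rounds each $n_k'$ up to the next multiple of $n$ (so the correction is applied via $T^{j}$ with $0\le j\le dn$).
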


\begin{proof}
Let $d,n\in \N$.
It is clear that $\RP^{[d]}(X,T^n)\subset \RP^{[d]}(X,T)$. Now we show that $\RP^{[d]}(X,T)\subset \RP^{[d]}(X,T^n)$.

Let $(x,y)\in \RP^{[d]}(X,T)$ and $\d>0$. There is some $\d'>0$ such that whenever $\rho(x_1,x_2)<\d'$, $\rho(T^{j}x_1,T^jx_2)<\d$ for all $j\in \{1,2,\ldots,dn\}$. Now since $(x,y)\in \RP^{[d]}(X,T)$,
there exist $x', y'\in X$ and a vector ${\bf n'} = (n_1',\ldots ,
n_d')\in\Z^d$ such that $\rho (x, x') < \d', \rho (y, y') <\d'$, and $$
\rho (T^{{\bf n'}\cdot \ep}x', T^{{\bf n'}\cdot \ep}y') < \d'\quad
\text{for any $\ep\in \{0,1\}^d\setminus \{\bf 0\}$}.$$
Thus by the choice of $\d'$, one has that
$$
\rho (T^{{\bf n'}\cdot \ep+j}x', T^{{\bf n'}\cdot \ep+j}y') < \d\quad
\text{for any $\ep\in \{0,1\}^d\setminus \{\bf 0\}$ and $1\le j\le dn$}.$$
Let $n_k=n_k'+j_k\equiv 0 \pmod n$, where $0\le j_k\le n-1$, $k=1,2,\ldots, d$. Note that $${\bf n'}\cdot \ep\le {\bf n}\cdot \ep \le {\bf n'}\cdot \ep +dn.$$ It follows that
$$
\rho (T^{{\bf n}\cdot \ep}x', T^{{\bf n}\cdot \ep}y') < \d\quad
\text{for any $\ep\in \{0,1\}^d\setminus \{\bf 0\}$}.$$
Since $n_k\equiv 0 \pmod n$ for all $k=1,2,\ldots, d$, $(x,y)\in \RP^{[d]}(X,T^n)$.
\end{proof}






Now we give the definition of $\AP^{[d]}$.

\begin{de}\label{arithm}
Let $(X,T)$ be a t.d.s. and $d\in\N$.
We say that $(x,y)\in X\times X$ is a {\em regionally proximal pair of order
$d$ along arithmetic progressions} if for each $\d>0$ there exist
$x',y'\in X$ and $n\in\Z$ such that $\rho(x, x') < \d,
\rho(y, y') <\d$ and $$\rho(T^{in}(x'),
T^{in}(y'))<\d\ \text{for each}\ 1\le i\le d.$$

The set of all such
pairs is denoted by $\AP^{[d]}(X, T)$ or $\AP^{[d]}(X)$ and is called the {\em regionally
proximal relation of order $d$ along arithmetic progressions}.
\end{de}

It follows  easily that $\AP^{[d]}(X,T)\subset \RP^{[d]}(X,T)$ for each $d\in\N$. The following simple observation will be used in the sequel.
Let $(X,T)$ be a t.d.s., $x\in X$, and $d\in \N$. Set $x^{(d)}=(x,x,\ldots,x)\in X^d$.

\begin{lem}\label{sharpnumber}
Let $(X,T)$ be minimal. Then for each $d\ge 3$, $(x^{(d-1)},y)\in N_d(X)$
for some $x,y\in X$ implies that $$(x,y)\in \AP^{[d-2]}(X,T)\subset \RP^{[d-2]}(X,T).$$
Moreover, for $d\ge 3$ and $x\in X$,
$T^{n_i}x\lra x, \ldots, T^{(d-1)n_i}x\lra x, T^{dn_i}x\lra  y$ for some $y$ implies that $(x,y)\in \AP^{[d-1]}(X,T)\subset \RP^{[d-1]}(X,T)$.
\end{lem}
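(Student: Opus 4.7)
The plan is to unfold the definition of $N_d(X)$ explicitly and then produce witnesses for the $\AP$-relations by translating \emph{backwards} along the orbit. Since $N_d(X)$ does not depend on the base point, I take the base point to be $x$ itself; unravelling the action of $\langle\sigma_d,\tau_d\rangle$ on $x^{(d)}$ gives
\begin{equation*}
N_d(X) = \ov{\{(T^{a+b}x,\, T^{a+2b}x,\, \ldots,\, T^{a+db}x) : a, b \in \Z\}}.
\end{equation*}
Hence the hypothesis $(x^{(d-1)},y) \in N_d(X)$ furnishes sequences $(a_i), (b_i) \subset \Z$ with $T^{a_i + k b_i} x \ra x$ for $k = 1, \ldots, d-1$ and $T^{a_i + d b_i} x \ra y$.

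For the first statement I set $p_i := T^{a_i + (d-1) b_i} x$ and $q_i := T^{a_i + d b_i} x$, so that $p_i \ra x$ and $q_i \ra y$. The key observation is that for every $1 \le j \le d-2$ the shifted exponents $d-1-j$ and $d-j$ both lie in $\{1, \ldots, d-1\}$, so
\begin{equation*}
T^{-j b_i} p_i = T^{a_i + (d-1-j) b_i} x \ra x \quad\text{and}\quad T^{-j b_i} q_i = T^{a_i + (d-j) b_i} x \ra x,
\end{equation*}
and consequently $\rho(T^{-j b_i} p_i, T^{-j b_i} q_i) \ra 0$ as $i \ra \infty$. Given $\d > 0$, for all sufficiently large $i$ the choice $x' := p_i$, $y' := q_i$, $n := -b_i$ witnesses $(x,y) \in \AP^{[d-2]}(X,T)$; the inclusion $\AP^{[d-2]} \subset \RP^{[d-2]}$ is immediate from the definitions.

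For the moreover part, the hypothesis is exactly the special case $a_i = 0$, $b_i = n_i$ of the setup above, so $(x^{(d-1)},y) \in N_d(X)$. The extra mileage comes from the trivial identity $T^{0 \cdot n_i} x = x$: in the preceding argument I can now afford to let the index $j$ also take the value $d-1$, since then $T^{-(d-1) n_i} p_i = T^{0} x = x$ and $T^{-(d-1) n_i} q_i = T^{n_i} x \ra x$. Running the same construction with $x' = T^{(d-1) n_i} x$, $y' = T^{d n_i} x$, $n = -n_i$ verifies $(x,y) \in \AP^{[d-1]}(X,T) \subset \RP^{[d-1]}(X,T)$. I do not foresee a serious obstacle: the whole argument is a bookkeeping exercise, and the only subtle point is recognising that it is the backward translation $T^{-b_i}$ (rather than a forward one) that lines up the exponents into the recurrent range $\{1, \ldots, d-1\}$, and that the moreover hypothesis is strictly stronger precisely because it supplies the free base value $T^{0} x = x$.
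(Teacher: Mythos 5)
Your proof is correct and follows essentially the same route as the paper: unfold membership in $N_d(X)$ into an explicit arithmetic‐progression approximation, shift backwards along the progression to align the recurrent exponents, and read off the $\AP$-witnesses. The paper's handling of the ``moreover'' clause is phrased as ``this shows $(x^{(d)},y)\in N_{d+1}(X)$, so apply the first part with $d+1$ in place of $d$,'' which is exactly the content of your observation that $T^{0\cdot n_i}x = x$ supplies the extra coordinate.
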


\begin{proof}
Let $x,y\in X$ such that $(x^{(d-1)},y)\in N_d(X)$. There are sequence $\{n_i\}_{i\in \N}, \{m_i\}_{i\in \N}\subset \Z$ such that
$$T^{n_i}x\to x,\ T^{n_i-m_i}x\to x, \ldots, T^{n_i- (d-2)m_i}x\to x,  T^{n_i- (d-1)m_i}x\to y, \quad i\to\infty. $$
Let
$$u_i=T^{n_i-(d-2)m_i}x, \quad v_i= T^{n_i-(d-1)m_i}x, \quad \forall i\in \N.$$
Then $$u_i\to x, \quad v_i \to y, \quad i\to\infty.$$
and for all $1\le j\le d-2$,
$$\rho(T^{jm_i}u_i,T^{jm_i}v_i)=\rho(T^{n_i-(d-2-j)m_i}x,T^{n_i-(d-1-j)m_i}x)
\to \rho(x,x)=0,\quad i\to \infty.$$
By definition $(x,y)\in \AP^{[d-2]}(X,T)$.

\medskip

Now we assume that for $d\ge 3$ and $x\in X$,
$T^{n_i}x\lra x, \ldots, T^{(d-1)n_i}x\lra x, T^{dn_i}x\lra  y$ for some $y$. By similar argument as above, we see that $(x^{(d)},y)\in N_{d+1}(X)$. Thus
$(x,y)\in \AP^{[d-1]}(X,T)\subset \RP^{[d-1]}(X,T)$. The proof is complete.
\end{proof}

\subsection{$\infty$-step nilsystems}\
\medskip

It follows that for any minimal system $(X,T)$,
$\RP^{[\infty]}=\bigcap_{d= 1}^\infty \RP^{[d]}$
is a closed invariant equivalence relation (we write $\RP^{[\infty]}(X,T)$
in case of ambiguity). The following notion
first
appeared in \cite{D-Y}. 

A minimal system $(X, T)$ is an {\em $\infty$-step
pro-nilsystem} or {\em a system of order $\infty$}, if the equivalence
relation $\RP^{[\infty]}$ is trivial, i.e. coincides with the
diagonal.
Similarly, one can show that the quotient of a
minimal system $(X,T)$ under $\RP^{[\infty]}$ is the maximal
$\infty$-step pro-nilfactor of $(X,T)$.

Let $(X,T)$ be a minimal system. It is easy to see that if $(X,T)$
is an inverse limit of minimal nilsystems, then $(X,T)$ is an
$\infty$-step pro-nilsystem. Conversely, if $(X,T)$ is a minimal
$\infty$-step pro-nilsystem, then $\RP^{[\infty]}=\Delta_X$.
So, $(X,T)=\displaystyle
\lim_{\longleftarrow}(X_d,T)_{d\in \N}$ as
$\Delta_X=\RP^{[\infty]}=\bigcap_{d\geq 1}\RP^{[d]}$. In fact
a minimal system is an $\infty$-step pro-nilsystem if and only if it is
an inverse limit of minimal nilsystems \cite{D-Y}.

Since minimal pro-nilsystems are uniquely ergodic, it is easy to see
that minimal $\infty$-step pro-nilsystems are also uniquely ergodic.
\medskip

\subsection{Furstenberg's tower for minimal distal systems}\
\medskip

Let $\pi:(X, \Gamma)\lra (Y,\Gamma)$ be a factor map. We define
the notion of {\it regionally proximal relation relative to $\pi$} (denoted by $\RP_\pi(X)$ or $\RP_\pi$) as follows:
$(x,y)\in \RP_\pi$ if for any neighborhood $U\times V$
of $(x,y)$ and any $\ep>0$ there are $(x',y')\in U\times V$ with $\pi(x')=\pi(y')$
and $g\in \Gamma$ such that $\rho(gx',gy')<\ep$. Thus for $(Y,\Gamma)$ the trivial one point system, we retrieve the regionally proximal relation.
We say that $\pi$ is  an {\it equicontinuous or isometric} extension if for any $\epsilon >0$ there exists $\delta>0$ such that $\pi(x_1)=\pi(x_2)$ and $\rho (x_1,x_2)<\delta$ imply $\rho (gx_1,g x_2)<\epsilon$ for any $g\in \Gamma$.
A factor map $\pi$ is equicontinuous if and only if $\RP_\pi(X)=\Delta(X)$.

\medskip

Furstenberg's structure theorem for minimal distal systems \cite{F63} says that any minimal distal system can be constructed by equicontinuous extensions. Furstenberg showed that if $\pi:X\rightarrow  Y$ is a factor map with $(X, \Gamma)$ minimal and distal, then $\RP_\pi$
is a closed invariant equivalence relation. This gives a structure theorem for minimal distal systems.
That is, for a minimal distal system $(X, \Gamma)$ there is an ordinal $\eta$
(which is countable when $X$ is metrizable) and a family of systems
$\{(F_n,\Gamma)\}_{n\le\eta}$ such that
\begin{enumerate}
  \item[(i)] $F_0$ is a one point trivial system,
  \item[(ii)] for every $n <\eta$ there exists a homomorphism
$\phi_{n+1} :F_{n+1}\to F_{n}$ which is equicontinuous,
  \item[(iii)] for a limit ordinal $\nu\le\eta$ the system $F_\nu$
is the inverse limit of the systems $\{F_\iota\}_{\iota<\nu}$,
\item[(iv)] $F_\eta=X$.
\end{enumerate}
\begin{equation}\label{f-tower}
F_0  \stackrel{\phi_1} \longleftarrow  F_1   \stackrel{\phi_2}\longleftarrow \cdots  \stackrel{\phi_n} \longleftarrow F_n  \stackrel{\phi_{n+1}} \longleftarrow F_{n+1}
 \longleftarrow \cdots \longleftarrow F_\eta=X.
\end{equation}
\eqref{f-tower} is referred to as {\it the Furstenberg tower}.
Note that in \eqref{f-tower} for each $n<\eta$, the system $(F_{n+1},\Gamma)$ is the largest equicontinuous extension of $F_{n}$ within $X$.
That is, if $\psi_n: X\lra F_n$
then $F_{n+1}=F_n/\RP_{\psi_n}$ for $n<\eta$. We call $F_n$  the {\em largest distal factor of order $n$}.

\medskip

We remark that
a simple argument shows the following result.
\begin{prop}
Let $(X,T)$ be a minimal system and $n\in \N$. Then $X_n=X/\RP^{[n]}$ is a factor of $F_n$.
\end{prop}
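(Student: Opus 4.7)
The plan is to prove the equivalent inclusion $R_{\psi_n} \subseteq \RP^{[n]}(X)$, where $\psi_n \colon X \to F_n$ is the tower projection, by induction on $n$. Since $X_n = X/\RP^{[n]}$ is a pro-nilsystem and in particular distal, it factors through the maximal distal factor of $X$, so it is harmless to assume $(X,T)$ is itself distal and apply the Furstenberg tower to $X$ directly. The base case $n=1$ is immediate: both $F_1$ and $X_1$ equal the maximal equicontinuous factor of $X$, by Theorem \ref{thm-RP}(1), which identifies $\RP^{[1]} = \RP$ with the equicontinuous structure relation for $\Gamma=\Z$.

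For the inductive step, assume $R_{\psi_n} \subseteq \RP^{[n]}(X)$. Using the identification $R_{\psi_{n+1}} = \RP_{\psi_n}(X)$ implicit in the description of \eqref{f-tower} (since $F_{n+1}$ is the maximal equicontinuous extension of $F_n$ inside $X$), take $(x,y) \in R_{\psi_{n+1}}$ and $\delta > 0$. The definition of $\RP_{\psi_n}$ first produces points $x', y'$ within $\delta/3$ of $x, y$ with $\psi_n(x') = \psi_n(y')$ and an integer $m$ such that $\rho(T^m x', T^m y') < \delta/3$. By the inductive hypothesis, $(x', y') \in R_{\psi_n} \subseteq \RP^{[n]}(X)$, so for any precision $\delta' > 0$ one can further find $x'', y''$ within $\delta'$ of $x', y'$ and a vector $\mathbf{n}' = (n_1', \ldots, n_n') \in \Z^n$ with $\rho(T^{\mathbf{n}' \cdot \epsilon} x'', T^{\mathbf{n}' \cdot \epsilon} y'') < \delta'$ for every $\epsilon \in \{0,1\}^n \setminus \{\mathbf{0}\}$.

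Next, verify the $\RP^{[n+1]}$-condition at $(x,y)$ using the perturbation $(x'', y'')$ and the vector $\mathbf{m} = (n_1', \ldots, n_n', m) \in \Z^{n+1}$. For $\eta = (\epsilon, \eta_{n+1}) \in \{0,1\}^{n+1} \setminus \{\mathbf{0}\}$ one has $\mathbf{m} \cdot \eta = \mathbf{n}' \cdot \epsilon + m\eta_{n+1}$. When $\eta_{n+1} = 0$, the required bound is immediate from the $\RP^{[n]}$-data. When $\eta_{n+1} = 1$, use commutativity $T^{\mathbf{n}' \cdot \epsilon + m} = T^m \circ T^{\mathbf{n}' \cdot \epsilon}$: for $\epsilon \neq \mathbf{0}$ the map $T^m$ is applied to a $\delta'$-close pair and the estimate follows from the modulus of continuity of the (already-fixed) map $T^m$; for $\epsilon = \mathbf{0}$ combine $\rho(T^m x', T^m y') < \delta/3$ with two continuity estimates for $T^m$ applied to the pairs $(x', x'')$ and $(y', y'')$ via the triangle inequality.

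The main obstacle is the bookkeeping of the order of choices: the integer $m$, and hence the modulus of continuity of the single map $T^m$, must be fixed before $\delta'$ is chosen, because the required smallness of $\delta'$ depends on both $\delta$ and that modulus. With this ordering in place, choosing $\delta'$ sufficiently small forces each of the $2^{n+1}-1$ cube-vertex distances below $\delta$, yielding $(x, y) \in \RP^{[n+1]}(X)$ and completing the induction. The resulting inclusion $R_{\psi_n} \subseteq \RP^{[n]}(X)$ for every $n$ is precisely the statement that $X_n$ is a factor of $F_n$.
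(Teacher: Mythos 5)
Your proof is correct, and it takes a genuinely different route from the paper's. The paper runs a short universality argument: from the inductive hypothesis that $\psi_n\colon X\to F_n$ factors through $\pi_n\colon X\to X_n$, it deduces $\RP_{\psi_n}\subseteq\RP_{\pi_n}$, hence a factor map $F_{n+1}=X/\RP_{\psi_n}\to X/\RP_{\pi_n}$; it then uses that $X/\RP_{\pi_n}$ is the largest equicontinuous extension of $X_n$ inside $X$ and that $X_{n+1}\to X_n$ is itself an equicontinuous extension (a nontrivial structural fact about pro-nilsystems which the paper invokes without proof or reference), so $X_{n+1}$ is a factor of $F_{n+1}$. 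You instead prove the inclusion $R_{\psi_{n+1}}=\RP_{\psi_n}(X)\subseteq\RP^{[n+1]}(X)$ directly by unwinding both definitions at the $\varepsilon$–$\delta$ level: first fix a nearby pair $(x',y')$ with $\psi_n(x')=\psi_n(y')$ and a near-collision time $m$ coming from $\RP_{\psi_n}$, then perturb to $(x'',y'')$ and a cube vector $\mathbf{n}'$ using the inductive inclusion $R_{\psi_n}\subseteq\RP^{[n]}$, and finally check the $(n+1)$-cube estimates for $(\mathbf{n}',m)$ by combining the $n$-cube estimates with uniform continuity of the single, already-fixed map $T^m$. The quantifier bookkeeping — fix $m$, then pick $\delta'$ depending on $\delta$ and on the modulus of continuity of $T^m$ — is handled correctly and is exactly the point where a naive version would go wrong. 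What your argument buys is elementarity: it replaces the implicit appeal to the isometric structure of $X_{n+1}\to X_n$ with a transparent diagonal estimate that uses nothing beyond the definitions of $\RP_\pi$ and $\RP^{[d]}$. The opening reduction to $X$ distal is also a reasonable way to make sense of $F_n$ (which the paper only defines for distal systems), and it is harmless since the maximal $n$-step pro-nilfactor of $X$ and of its maximal distal factor coincide.
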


\begin{proof}
We prove the result by induction on $n\in \N$. When $n=1$, $\RP^{[1]}(X)=\RP(X)$ and $X_1=X/\RP^{[1]}(X)=X/\RP(X)=F_1$.
Now we assume that $X_n$ is a factor of $F_n$ for $n$. Let $\psi_n: X\rightarrow F_n$ and $\pi_n: X\rightarrow X_n$ be the corresponding factor maps.
$$\xymatrix{
                & X\ar[dr]^{\pi_n} \ar[dl]_{\psi_n}             \\
 F_n \ar[rr] & &     X_n        }
$$
It induces the following factor map
$$\phi: F_{n+1}=X/\RP_{\psi_n}(X) \rightarrow X'_{n+1}=X/\RP_{\pi_n}(X).$$
Note $X'_{n+1}$ is the largest equicontinuous extension of $X_{n}$ within $X$. As $X_{n+1}$ is an equicontinuous extension of $X_n$ within $X$. It follows that $X_{n+1}$ is a factor of $X_{n+1}'$. Thus  $X_n$ is a factor of $F_n$.
\end{proof}

An interesting result proved by Qiu and Zhao \cite[Section 6]{QZ-19} is that $F_n=X_n$ for pro-nilsystems.

\begin{lem} \label{qiu}
Let $k,d\in \N$ with $k\le d$ and $(X,T)$ be a minimal $d$-step pro-nilsystem. Then $X_k=X/\RP^{[k]}$
coincides with $F_k$ for $1\le k\le d$.
\end{lem}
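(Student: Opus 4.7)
The plan is to induct on $k$. The base case $k=1$ is immediate, since $F_1$ is by construction the maximal equicontinuous factor of $X$, and an equicontinuous minimal system is the same as a $1$-step pro-nilsystem, giving $F_1 = X_1$.

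For the inductive step, assume $F_j = X_j$ for all $1 \le j \le k$. The preceding proposition already shows $X_{k+1}$ is a factor of $F_{k+1}$, so by Theorem~\ref{lift}(2) it suffices to prove that $F_{k+1}$ itself is a $(k+1)$-step pro-nilsystem, i.e.\ $\RP^{[k+1]}(F_{k+1}) = \Delta_{F_{k+1}}$.

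I will rely on two preliminary observations. First, $F_{k+1}$ is a factor of the $d$-step pro-nilsystem $X$, so applying Theorem~\ref{lift}(1) to $\psi_{k+1}\colon X \to F_{k+1}$ gives $\RP^{[d]}(F_{k+1}) = \Delta$; hence $F_{k+1}$ is itself $d$-step pro-nil and in particular distal. Second, the maximal $k$-step pro-nilfactor $(F_{k+1})_k$ coincides with $F_k = X_k$: on one hand $F_k$ is a $k$-step pro-nilfactor of $F_{k+1}$ by the inductive hypothesis, so it is a factor of $(F_{k+1})_k$; on the other hand $(F_{k+1})_k$ is a $k$-step pro-nilfactor of $X$ via the composition $X\to F_{k+1}\to (F_{k+1})_k$, so it is a factor of $X_k$. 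Now take $(y_1, y_2) \in \RP^{[k+1]}(F_{k+1})$ and write $\phi_{k+1}\colon F_{k+1} \to F_k$ for the equicontinuous map of the Furstenberg tower. Theorem~\ref{lift}(1) applied to $\phi_{k+1}$ places $(\phi_{k+1}(y_1), \phi_{k+1}(y_2))$ inside $\RP^{[k+1]}(X_k) \subset \RP^{[k]}(X_k) = \Delta$, so $(y_1, y_2) \in R_{\phi_{k+1}}$; and the inclusion $\RP^{[k+1]} \subset \RP$ puts the pair in $\RP(F_{k+1}) \cap R_{\phi_{k+1}}$. The plan is then to use openness of $\phi_{k+1}$ — equicontinuous extensions between minimal systems are open — to identify this intersection with $\RP_{\phi_{k+1}}(F_{k+1})$, which is $\Delta$ because $\phi_{k+1}$ is equicontinuous. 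Hence $y_1 = y_2$, completing the induction.

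I expect the main obstacle to be the identification $\RP(F_{k+1}) \cap R_{\phi_{k+1}} = \RP_{\phi_{k+1}}(F_{k+1})$. The remaining steps are essentially bookkeeping with the functoriality of $\RP^{[k]}$ under factor maps (Theorem~\ref{lift}(1)) and the inclusion $\RP^{[k+1]} \subset \RP^{[k]}$. The non-trivial identity rests on a perturbation argument: an approximating pair $(y_1', y_2')$ close to $(y_1, y_2)$ with $\rho(T^t y_1', T^t y_2')$ small must be slightly modified — using openness of $\phi_{k+1}$ to push $y_2'$ into the fibre of $\phi_{k+1}(y_1')$ — without destroying the smallness of the orbit distance.
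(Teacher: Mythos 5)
The paper does not prove this lemma itself but cites it to Qiu and Zhao \cite{QZ-19}, so there is no internal proof to compare against. That said, your proposal contains a genuine gap at exactly the step you flagged as the main obstacle.

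The identity $\RP(F_{k+1}) \cap R_{\phi_{k+1}} = \RP_{\phi_{k+1}}(F_{k+1})$ is false in precisely the situations that matter. Take $(X,T)$ a non-trivial $3$-step nilsystem, so $d=3$ and we run the induction at $k=1$. Here $F_2$ is a genuine $2$-step nilsystem, $\phi_2\colon F_2\to F_1$ is its Kronecker factor map, and since $\phi_2$ is equicontinuous we have $\RP_{\phi_2}(F_2)=\Delta$. On the other hand $(F_2)_1 = F_1$, so $\RP(F_2)=R_{\phi_2}$; hence $\RP(F_2)\cap R_{\phi_2}=R_{\phi_2}$, which is strictly larger than the diagonal whenever $F_2\ne F_1$. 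The passage from $\RP^{[k+1]}$ down to $\RP^{[1]}=\RP$ discards essentially all of the higher-order information, and what is left over — regional proximality plus membership in the same $\phi_{k+1}$-fibre — is typically satisfied by the entire relation $R_{\phi_{k+1}}$, so it cannot force $y_1=y_2$.

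The perturbation idea you sketch does not repair this. Given $(y_1',y_2')$ close to $(y_1,y_2)$ with $\rho(T^t y_1',T^t y_2')$ small, openness of $\phi_{k+1}$ does produce $y_2''$ near $y_2'$ with $\phi_{k+1}(y_2'')=\phi_{k+1}(y_1')$, but nothing controls $\rho(T^t y_2', T^t y_2'')$ for the same (typically very large) time $t$. The extension $\phi_{k+1}$ is equicontinuous, so small displacements \emph{within a single fibre} stay small under iteration; but $y_2'$ and $y_2''$ lie in different fibres, and the ambient system $F_{k+1}$ is only distal, not equicontinuous, so a small initial error can be arbitrarily magnified along the orbit. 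In short, the control you need is exactly the relative equicontinuity, and it is unavailable across fibres. To make progress you would have to exploit the $(k+1)$-fold structure of $\RP^{[k+1]}$ directly — for example, showing the approximating data in the definition of $\RP^{[k+1]}(F_{k+1})$ can be chosen to project to a regionally proximal configuration in $F_k$ and then lifted through $\phi_{k+1}$ fibre by fibre — rather than collapsing everything to $\RP^{[1]}$ and trying to recover the loss by a perturbation. Your preliminary observations (that $F_{k+1}$ is $d$-step pro-nil, that $(F_{k+1})_k=F_k=X_k$, and that $\phi_{k+1}\times\phi_{k+1}$ maps $\RP^{[k+1]}(F_{k+1})$ into the diagonal of $X_k$) are correct and useful; the breakdown is entirely in the final reduction.
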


\subsection{Some properties of $N_d(X,T)$}\
\medskip

Let $(X,T)$ be a t.d.s., $x\in X$, $A\subseteq X$ and $d\in \N$. Set $x^{(d)}=(x,x,\ldots,x)\in X^d$,
$$\Delta_d(A)=\{x^{(d)}=(x,x,\ldots,x): x\in A\}\subseteq X^d,$$
$$\sigma_d=\sigma(T)=T^{(d)}=T\times \ldots\times T \ (d \ \text{times}),$$ and
$$\tau_d=\tau_d(T)=T\times T^2 \times \ldots \times T^{d}.$$
Note that $\D_d(X)$ is the diagonal of $X^d$.
Let $\G_d=\langle\sigma_d, \tau_d\rangle$, where $\langle\sigma_d, \tau_d\rangle$ denotes the group generated by $\sigma_d$ and $\tau_d$.
Let $\tau_d'=\tau'_d(T)=\id \times T\times \ldots \times T^{d-1}=\id\times \tau_{d-1}.$ Note that $\G_d=\langle \sigma_d, \tau_d\rangle=\langle\sigma_d, \tau'_d \rangle$, which will used frequently in the paper.


\medskip

Let $X, Y$ be sets, and let $\pi : X\rightarrow Y$ be a	map. A subset $L$ of $X$ is called
{\em $\pi$-saturated} if $$\{x\in L: \pi^{-1}(\pi(x))\subseteq L\}=L,$$ i.e. $L=\pi^{-1}(\pi(L))$.

\begin{de}\cite{G94}
Let $\pi: (X,T)\rightarrow (Y,T)$ be a factor map of topological systems and $d\in \N$.
$(Y,T)$ is said to be a {\em $d$-step topological characteristic factor (along $\tau_d$) }
or {\em topological characteristic factor of order $d$} if there exists a dense $G_\d$	set $X_0$ of $X$ such
that for each $x\in X_0$ the orbit closure $$L_x=\overline{\O}(x^{(d)}, \tau_d)$$ is $\pi^{(d)}=\pi\times \ldots \times \pi$
($d$ times) saturated. That is, $(x_1,x_2,\ldots, x_d)\in L_x$ if and only if $(x_1',x_2',\ldots, x_d')\in L_x$
whenever for all $i\in \{1,2,\ldots, d\}$, $\pi(x_i)=\pi(x_i')$.
\end{de}

\begin{thm}\cite{G94}\label{thm-Glasner-distalcase}
If $(X,T)$ is a distal minimal system and $d\ge 2$, then its largest distal factor of order $d-1$ $F_{d-1}$ is its topological characteristic factor of order $d$.
\end{thm}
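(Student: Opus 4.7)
My plan is to induct on the height $\eta$ of the Furstenberg tower $F_0 \leftarrow F_1 \leftarrow \cdots \leftarrow F_\eta = X$ of the given distal minimal system. The base case $\eta = d-1$ is immediate, since then $X = F_{d-1}$ and every orbit closure is tautologically $\pi^{(d)}$-saturated for $\pi = \mathrm{id}$. For the inductive step I assume the theorem has been established for every distal minimal system of tower height at most $\eta - 1 \geq d - 1$, and I derive it for $X = F_\eta$ using the top equicontinuous extension $\phi: F_\eta \to F_{\eta-1}$.

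Setup: distality of $X$ gives distality of $(X^d, \tau_d)$, so $L_x := \overline{\mathcal{O}}(x^{(d)}, \tau_d)$ is $\tau_d$-minimal for every $x \in X$. The assignment $x \mapsto L_x$, valued in the space of closed subsets of $X^d$, is upper semi-continuous, hence its set $X_0$ of continuity points is a dense $G_\delta$. I aim to check that $L_x = (\pi^{(d)})^{-1}\pi^{(d)}(L_x)$ for every $x \in X_0$, where $\pi : X \to F_{d-1}$ is the canonical factor map. Given $(y_1, \ldots, y_d) \in L_x$ and any $(y_1', \ldots, y_d')$ with $\pi(y_i') = \pi(y_i)$, I would first project via $\phi^{(d)}$ down to $F_{\eta-1}^d$: by the inductive hypothesis applied to $F_{\eta-1}$, both $(\phi(y_i))_i$ and $(\phi(y_i'))_i$ lie in $L_{\phi(x)}$. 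What remains is to prove that the entire $\phi^{(d)}$-fiber over any point of $L_{\phi(x)}$ is contained in $L_x$, thereby reducing the whole problem to the single equicontinuous step $\phi$.

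This last reduction is where the main obstacle lies. Realising $\phi$ as a principal bundle with compact structure group $G$, the claim becomes: the closure in $G^d$ of $\{(T^n, T^{2n}, \ldots, T^{dn})|_{\mathrm{fib}} : n \in \mathbb{Z}\}$ acts transitively on each $\phi^{(d)}$-fiber above every $\tau_d$-orbit in $F_{\eta-1}^d$. By a Kronecker/Pontryagin duality argument in $G^d$, any failure of transitivity would yield a nontrivial character relation of the form $\chi_1(T^n)\chi_2(T^{2n}) \cdots \chi_d(T^{dn}) = 1$ for all $n \in \mathbb{Z}$, with not all $\chi_i$ trivial. The technical heart of the proof is to show that any such relation can be ``unwound'' into a $T$-invariant equicontinuous structure on a proper quotient of the fiber, which then descends to a distal extension of $F_{d-1}$ strictly inside $F_\eta$---contradicting the maximality of $F_{d-1}$ as the largest distal factor of order $d-1$. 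Carrying out this descent cleanly, using that $F_{d-1}$ absorbs all distal data of order $d-1$, is the crux of the argument and the step I would expect to consume most of the work.
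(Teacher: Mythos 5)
Your overall strategy---transfinite induction over the Furstenberg tower, reducing to the top equicontinuous step $\phi\colon F_\eta\to F_{\eta-1}$---is a natural way to approach the theorem, and the bookkeeping up to the reduction is essentially sound. However, the proposal has a genuine gap at exactly the point you flag as ``the crux.'' The obstruction is twofold. First, the relation $\chi_1(T^n)\chi_2(T^{2n})\cdots\chi_d(T^{dn})=1$ is not well-posed: $T^n$ is a transformation of $X$, not an element of the structure group $G$. After lifting $\phi$ to a group extension $F_{\eta-1}\times_\sigma G$ (which already requires justification, since an equicontinuous extension need not itself be a principal $G$-bundle), the relevant data is a cocycle $\sigma\colon F_{\eta-1}\to G$, and a failure of transitivity on the $\phi^{(d)}$-fibers would yield a relation of the form $\prod_{j=1}^d\chi_j\bigl(\sigma^{(jn)}(y)\bigr)=1$ holding for all $n$ and for $y$ ranging over an orbit closure in $F_{\eta-1}$; moreover $G$ is a compact metric group that need not be abelian for $\mathbb{Z}$-actions, so the Pontryagin-dual picture is itself not free. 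Second, turning such a cocycle identity into a distal factor of order $<d$ strictly exceeding $F_{d-1}$---the descent that is supposed to contradict maximality---is comparable in difficulty to the Conze--Lesigne / Host--Kra cocycle analysis and is nowhere carried out. Observing that a relation exists is not the same as extracting a factor from it; that extraction is the whole theorem.

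Two smaller problems, plus a structural comparison. (i) The transfinite induction must pass limit ordinals, where $F_\nu$ is an inverse limit of $\{F_\iota\}_{\iota<\nu}$; you do not argue that the characteristic-factor property persists under inverse limits. (ii) The set-valued map $x\mapsto L_x$ is lower (not upper) semi-continuous, and it is the continuity points of a lower semi-continuous map that form the dense $G_\delta$ you want. Finally, this is not the route Glasner takes in \cite{G94}: there the generic set is built by a countability argument on open covers of $N_d(F_{d-1})$ (the mechanism packaged in Theorem~\ref{Thm-Glasner's-Lemma} of the present paper), and the key saturation of orbit closures of open subsets of $N_d(X)$ is established through enveloping-semigroup and topological-ergodic-decomposition arguments along the tower, rather than by reducing to transitivity of a fiber-group action on a single bundle and invoking duality. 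The reduction you propose organizes the problem but does not diminish it; as written, the claimed ``Kronecker/Pontryagin'' step would still have to reproduce the content of Glasner's Lemma~3.3.
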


\begin{thm}\label{thm-nil-case}
Let $(X,T)$ be a $d$-step nilsystem for some $d\in \N$. Then for each $1\le i\le d-1$,
$X_i$ is
a
$(i+1)$-step topological characteristic factor of $X$.
\end{thm}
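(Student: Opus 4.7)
The plan is to deduce Theorem~\ref{thm-nil-case} as an immediate consequence of the two results that have just been recorded, namely Theorem~\ref{thm-Glasner-distalcase} (Glasner's theorem on topological characteristic factors of distal systems) and Lemma~\ref{qiu} (the Qiu--Zhao identification $F_k = X_k$ inside a pro-nilsystem). There is essentially nothing new to prove; the point is simply to observe that the hypotheses of both results hold for a minimal $d$-step nilsystem.

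First I would note that a minimal $d$-step nilsystem $(X,T)$ is distal, since left translations on a nilmanifold $G/\Lambda$ preserve a $G$-invariant metric and hence form an equicontinuous (and in particular distal) family on each orbit closure. Applying Theorem~\ref{thm-Glasner-distalcase} with parameter $k=i+1$, where $i$ ranges over $\{1,2,\dots,d-1\}$ so that $k\ge 2$, yields that the largest distal factor of order $i$, namely $F_i$, is an $(i+1)$-step topological characteristic factor of $X$. Concretely, this gives a dense $G_\delta$ set $X_0 \subseteq X$ such that for every $x \in X_0$ the orbit closure $L_x = \overline{\O}(x^{(i+1)}, \tau_{i+1})$ is saturated with respect to the product of the Furstenberg factor map $X \to F_i$.

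Next I would invoke Lemma~\ref{qiu}: since $(X,T)$ is in particular a $d$-step pro-nilsystem, one has $F_k = X_k$ as factors of $X$ for every $1 \le k \le d$. In particular $F_i = X_i$ for $1 \le i \le d-1$, so the factor map $X \to F_i$ coincides with $\pi_i : X \to X_i$, and the saturation property from the previous step is precisely the statement that $L_x$ is $\pi_i^{(i+1)}$-saturated. Therefore $X_i$ is an $(i+1)$-step topological characteristic factor of $X$, which is what we wanted.

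Since the argument is purely a concatenation of two already-established results, I do not foresee any real obstacle. The only thing worth double-checking is the bookkeeping of indices: Glasner's theorem, applied to produce a characteristic factor of order $i+1$, supplies $F_i$, and Qiu--Zhao's identification $F_i = X_i$ requires $i \le d$, both of which are guaranteed by the hypothesis $1 \le i \le d-1$.
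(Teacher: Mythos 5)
Your proof is correct and takes exactly the same route as the paper, which also deduces the theorem directly from Theorem~\ref{thm-Glasner-distalcase} and Lemma~\ref{qiu}; you have merely spelled out the index bookkeeping that the paper leaves implicit. Nothing to add.
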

\begin{proof} This following from Theorem \ref{thm-Glasner-distalcase} and Lemma \ref{qiu}.
\end{proof}

Let $(X,T)$ be a t.d.s. and $d\in N$. Let
$$N_d(X,T)=N_d(X)=\overline{\O}(\D_d(X), \tau_d).$$
If $(X,T)$ is transitive and $x\in X$ is a transitive point, then
$N_d({X})=\overline{\O}(x^{(d)},
\langle \sigma_d,\tau_d \rangle).$

\medskip

We want to emphasize that $N_d(X,T)$ also plays
a key role in the study of the pointwise convergence of (\ref{m-ave}) for ergodic distal m.p.t.
In particular, it is shown
in
\cite{HSY-point} that each ergodic m.p.t. admits a uinquely ergodic minimal model
for which
$Z_d=X_d$ for $d\in\N$, where $Z_d$ is the measurable pro-nilfactor defined in \cite{HK05}.

\medskip


The next theorem is fundamental for the analysis carried throughout our work
(for a short enveloping semigroup proof see \cite[Proposition 1.55]{G-book}):

\begin{thm}[Glasner]\cite{G94}\label{thm-Glasner}
Let $(X,T)$ be a minimal t.d.s. and $d\in\N$. Then the system $(N_d(X), \langle \sigma_d, \tau_d\rangle)$
is minimal and the $\tau_d$-minimal points are dense in $N_d(X)$.
\end{thm}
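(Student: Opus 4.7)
My approach is to prove the minimality of $(N_d(X),\langle\sigma_d,\tau_d\rangle)$ first and then deduce density of $\tau_d$-minimal points as a formal consequence. The first observation is that $N_d(X)$ is $\langle\sigma_d,\tau_d\rangle$-invariant: $\tau_d$-invariance is immediate from the definition, and since $\sigma_d$ commutes with $\tau_d$ (both are coordinate-wise products of $T$-powers) and preserves $\Delta_d(X)$, one gets $\sigma_d(\tau_d^n\Delta_d(X))=\tau_d^n\Delta_d(X)$, hence $\sigma_d(N_d(X))\subseteq N_d(X)$. Let $M\subseteq N_d(X)$ be a $\langle\sigma_d,\tau_d\rangle$-minimal subsystem (exists by Zorn). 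Since $M$ is $\tau_d$-invariant and $N_d(X)=\overline{\tau_d^{\Z}\Delta_d(X)}$, to show $M=N_d(X)$ it suffices to establish $\Delta_d(X)\subseteq M$. Furthermore, $\Delta_d(X)\cap M$ is closed and $\sigma_d$-invariant, and under the identification $(\Delta_d(X),\sigma_d|_{\Delta_d(X)})\cong(X,T)$ the diagonal $\Delta_d(X)$ is $\sigma_d$-minimal. So the whole problem reduces to exhibiting a single diagonal point in $M$.

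The key step is to find such a diagonal point via the enveloping semigroup. Let $E=E(X,T)$. Every element of $\langle\sigma_d,\tau_d\rangle$ acts on $X^d$ coordinate-wise as a $d$-tuple of $T$-powers in arithmetic progression, namely $\sigma_d^m\tau_d^n=(T^{m+n},T^{m+2n},\ldots,T^{m+dn})$; passing to coordinate-wise limits, every $p\in E(N_d(X),\langle\sigma_d,\tau_d\rangle)$ has the form $(p_1,\ldots,p_d)$ with $p_i\in E$. A crucial special case: for any $p\in E$, the diagonal lift $p^{(d)}=p\times\cdots\times p$ belongs to $E(N_d(X),\langle\sigma_d,\tau_d\rangle)$, since $\sigma_d^{n_\alpha}\to p^{(d)}$ whenever $T^{n_\alpha}\to p$ in $E$. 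Now pick any $y\in M$; by Auslander-Ellis applied in $E(N_d(X),\langle\sigma_d,\tau_d\rangle)$ there is a minimal idempotent $v$ with $vy=y$. Combining $v$ with suitably chosen diagonal lifts $u^{(d)}$ of minimal idempotents $u\in E$, and using the ideal-theoretic structure of $E$ (minimal left ideals and their idempotents), one produces an element $q\in E(N_d(X),\langle\sigma_d,\tau_d\rangle)$ with $qy\in\Delta_d(X)\cap M$, completing the proof of minimality.

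The main obstacle is precisely the algebraic manipulation in the preceding paragraph: extracting from the coordinate-wise structure $E(N_d(X),\langle\sigma_d,\tau_d\rangle)\subseteq E^d$, together with the lifting of diagonal idempotents, an operator that collapses $y$ onto the diagonal while staying inside the enveloping semigroup of the joint action. Once minimality of $(N_d(X),\langle\sigma_d,\tau_d\rangle)$ is established, density of $\tau_d$-minimal points follows formally. Let $P$ be the set of $\tau_d$-minimal points of $N_d(X)$, which is nonempty by Zorn applied to closed nonempty $\tau_d$-invariant subsets. It is trivially $\tau_d$-invariant, and $\sigma_d$-invariant because $\sigma_d$ commutes with $\tau_d$ and therefore carries $\tau_d$-minimal orbit closures to $\tau_d$-minimal orbit closures. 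Thus $\overline{P}$ is a closed nonempty $\langle\sigma_d,\tau_d\rangle$-invariant subset of $N_d(X)$, and the minimality just proved forces $\overline{P}=N_d(X)$, as required.
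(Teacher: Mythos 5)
The structural scaffolding here is sound: the $\langle\sigma_d,\tau_d\rangle$-invariance of $N_d(X)$, the reduction of minimality to exhibiting a single diagonal point in a minimal subsystem $M$ (using that $\Delta_d(X)$ is $\sigma_d$-minimal and $N_d(X)=\overline{\O(\Delta_d(X),\tau_d)}$), and the derivation of density of $\tau_d$-minimal points from minimality of $(N_d(X),\G_d)$ are all correctly argued. The paper itself does not prove the theorem but defers to Glasner's original argument and to an enveloping-semigroup proof in his book, so I am judging your proposal on its own merits.

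The difficulty is that the crucial step is not actually carried out, and you say so yourself: ``one produces an element $q\in E(N_d(X),\langle\sigma_d,\tau_d\rangle)$ with $qy\in\Delta_d(X)\cap M$'' and then ``the main obstacle is precisely the algebraic manipulation in the preceding paragraph.'' That sentence is the entire content of the theorem; without it, what remains is the easy bookkeeping around a hard fact. Moreover, the direction you chose makes the hard step harder than it needs to be. You start from an arbitrary $y=(y_1,\dots,y_d)\in M$, fix it with a minimal idempotent $v=(v_1,\dots,v_d)$, and then hope to assemble from $v$ and diagonal lifts $u^{(d)}$ an operator that collapses $y$ onto the diagonal. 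But $v$ and the candidate diagonal lifts $u^{(d)}$ live in possibly unrelated minimal left ideals of $\Sigma=E(N_d(X),\G_d)$, and you have no a priori reason to believe $u^{(d)}$ is even a \emph{minimal} idempotent of $\Sigma$ (it is an idempotent, but its minimality in $\Sigma$ is precisely the kind of nontrivial fact that needs proof, since minimality of each coordinate $u$ in $E(X,T)$ does not obviously transfer to minimality of the $d$-tuple in the smaller semigroup $\Sigma\subseteq E(X,T)^d$). The standard route, which Glasner follows, turns the problem around: fix $x\in X$ and a minimal idempotent $u\in E(X,T)$ with $ux=x$, and prove directly that $x^{(d)}$ is a $\G_d$-minimal point (equivalently, that $u^{(d)}$ is minimal in $\Sigma$, or more precisely that $u^{(d)}\Sigma u^{(d)}$ is a group). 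Then $N_d(X)=\overline{\O(x^{(d)},\G_d)}$ is automatically minimal, with no need to pass through an arbitrary $M$. That approach places all the work on a single concrete diagonal point and a single concrete idempotent, whereas yours requires controlling an arbitrary $v$ about which nothing is known; it is not clear the machinery you invoke can deliver the collapsing element $q$ in that generality. As it stands, the proposal identifies the shape of the proof but leaves its substance open.
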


By the same proof of Theorem \ref{thm-Glasner}, we  have

\begin{thm}\label{thm-Glasner-1}
Let $(X,T)$ be a minimal t.d.s. and $a_1,a_2,\ldots, a_d$ be distinct numbers of $ \Z$, where $d\in\N$. Let
$$\tau=T^{a_1}\times T^{a_2}\times \ldots \times T^{a_d}.$$
Then $\big(\overline{\O}(\Delta_d(X),\tau), \langle\sigma_d, \tau \rangle \big)$ is minimal and the $\tau$-minimal points are dense in $\overline{\O}(\Delta_d(X),\tau)$.
\end{thm}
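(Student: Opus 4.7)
The plan is to imitate Glasner's original proof of Theorem \ref{thm-Glasner}. Set $F := \langle \sigma_d, \tau\rangle$ and $N := \overline{\O}(\Delta_d(X), \tau)$. Observe that $\sigma_d$ and $\tau$ commute (both are products of powers of a single homeomorphism $T$) and $\sigma_d$ preserves the diagonal $\Delta_d(X)$; hence $N$ is $F$-invariant. Since $(X,T)$ is minimal, $(\Delta_d(X), \sigma_d)$ is also minimal, and this is the only consequence of the minimality of $(X,T)$ that the argument will need.

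For the minimality of $(N, F)$, I would show that any $F$-minimal subset $M \subseteq N$ intersects $\Delta_d(X)$. Once this holds, the $\sigma_d$-minimality of $\Delta_d(X)$ forces $\Delta_d(X) \subseteq M$, and then $\tau$-invariance of $M$ gives $N = \overline{\O}(\Delta_d(X), \tau) \subseteq M$, so $M = N$. To exhibit a diagonal point inside $M$, I would invoke the enveloping-semigroup argument in \cite[Proposition 1.55]{G-book} applied to the abelian group $F$ acting on $X^d$: approximate a point of $M$ by iterates of the form $\tau^{n_\alpha}(x_\alpha^{(d)})$, pass to a minimal idempotent $v$ of the enveloping semigroup of $(X^d, F)$, and arrange $v$ so that its image lies in $\Delta_d(X)$. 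The numerical input this argument requires is only that the exponents $a_1, \ldots, a_d$ are distinct (ensuring that $F$ is a rank-two, rather than cyclic, subgroup of $\mathrm{Homeo}(X^d)$); the specific values $1, 2, \ldots, d$ used in Theorem \ref{thm-Glasner} play no further role.

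For the density of the $\tau$-minimal points in $N$, I would first fix any $\tau$-minimal point $z^* \in N$, which exists by Zorn's lemma applied to the family of closed non-empty $\tau$-invariant subsets of $N$. Given any non-empty open $U \subseteq N$, the $F$-minimality of $N$ established above yields $k, m \in \Z$ with $\sigma_d^k \tau^m z^* \in U$. Because $\sigma_d$ commutes with $\tau$, the point $\sigma_d^k z^*$ is itself $\tau$-minimal, and then so is $\tau^m(\sigma_d^k z^*) = \sigma_d^k \tau^m z^*$. Hence every non-empty open subset of $N$ contains a $\tau$-minimal point.

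The main obstacle is the enveloping-semigroup step used to produce the diagonal point inside the $F$-minimal subset $M$; this is the crux of Glasner's original proof, and its transfer to the present setting is legitimate because the only features of $\tau_d$ that enter are its commutativity with $\sigma_d$, the $\sigma_d$-minimality of $\Delta_d(X)$, and the distinctness of the exponents, all of which hold here.
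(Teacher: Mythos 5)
Your proposal is correct and takes the same route as the paper, which proves this statement simply by asserting that the proof of Theorem \ref{thm-Glasner} (whose enveloping-semigroup proof is cited from Glasner's book) carries over verbatim; you have correctly identified that the only structural inputs to that proof --- commutativity of $\sigma_d$ with $\tau$, $\sigma_d$-minimality of $\Delta_d(X)$, and distinctness of the exponents --- survive the replacement of $1,2,\ldots,d$ by general distinct $a_1,\ldots,a_d$. Your derivation of density of the $\tau$-minimal points from $F$-minimality of $N$ together with the existence of a single $\tau$-minimal point is exactly the intended argument.
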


The following two lemmas follow from \cite{G94}.

\begin{lem}\label{lem-intt}
Let $(X,T)$ be a minimal system and $d\in \N$. Let $U\subset X$ be a non-empty open subset and let $U^{(d)}=\Delta_d(U)=\{x^{(d)}: x\in U\}$. Then
$$\intt _{N_{d}(X)}\overline{\O}(U^{(d)},\tau_d)\neq \emptyset.$$
\end{lem}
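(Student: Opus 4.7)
The plan is to show that $M := \overline{\O}(U^{(d)}, \tau_d)$ has non-empty interior in $N_d(X)$ by realizing $N_d(X)$ as a \emph{finite} union of $\sigma_d$-translates of $M$ and then invoking the Baire category theorem. The whole strategy rests on two easy observations that have already been noted earlier in the paper: the action of $\sigma_d$ commutes with the action of $\tau_d$ (both are products of powers of $T$), and $\overline{\O}(\Delta_d(X),\tau_d)=N_d(X)$ by definition.

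First, I would use the minimality of $(X,T)$ and the compactness of $X$ to extract a finite subcover
$$X = \bigcup_{i=1}^{k} T^{n_i}U$$
from the open cover $\{T^nU\}_{n\in\Z}$. Diagonalizing this identity gives $\Delta_d(X)=\bigcup_{i=1}^{k}\sigma_d^{n_i}U^{(d)}$. Next I would apply $\tau_d^m$ and use the commutation $\tau_d^m\sigma_d^{n_i}=\sigma_d^{n_i}\tau_d^m$ to obtain
$$\O(\Delta_d(X),\tau_d) \;=\; \bigcup_{m\in\Z}\bigcup_{i=1}^{k}\sigma_d^{n_i}\bigl(\tau_d^m U^{(d)}\bigr)\;=\;\bigcup_{i=1}^{k}\sigma_d^{n_i}\,\O(U^{(d)},\tau_d).$$
Taking the closure of both sides, and using that the closure of a \emph{finite} union equals the union of closures (this is where the finiteness of the subcover is essential), yields
$$N_d(X) \;=\; \overline{\O(\Delta_d(X),\tau_d)} \;=\; \bigcup_{i=1}^{k}\sigma_d^{n_i}M.$$

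Finally, $N_d(X)$ is a non-empty compact metric space, hence a Baire space, and it is now written as a finite union of the closed sets $\sigma_d^{n_i}M$. If each $\sigma_d^{n_i}M$ had empty interior it would be nowhere dense, contradicting Baire. Therefore some $\sigma_d^{n_i}M$ has non-empty interior in $N_d(X)$; since $\sigma_d^{n_i}$ is a self-homeomorphism of $N_d(X)$, translating back gives $\intt_{N_d(X)}M\neq\emptyset$, as required. The only step that takes real thought is compressing the a priori infinite translation set $\{T^n U : n\in\Z\}$ down to a \emph{finite} subcover so that closures distribute over the union; everything else is formal manipulation with the commuting generators $\sigma_d$ and $\tau_d$.
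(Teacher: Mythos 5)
Your proof is correct and follows essentially the same route as the paper's: take a finite subcover $X=\bigcup T^{n_i}U$ by minimality, diagonalize, pull the $\sigma_d$-translates through $\overline{\O}(\cdot,\tau_d)$ using commutativity and finiteness, and conclude by a Baire argument. The only cosmetic difference is that the paper writes the subcover with negative powers $T^{-k}$ and leaves the final Baire step implicit, whereas you spell it out.
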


\begin{proof}
Let $U\subset X$ be a non-empty open subset.
Since $(X,T)$ is minimal, there is some $K\in \N$ such that
$X=\bigcup_{k=1}^{K}T^{-k}U$. It follows that
 $\Delta_d(X)=\bigcup_{k=1}^K (T^{(d)})^{-k} U^{(d)}$. Thus
$$N_d(X)= \overline{\O}(\Delta_d(X),\tau_d)=\overline{\O}(\bigcup_{k=1}^K (T^{(d)})^{-k} U^{(d)} ,\tau_d)= \bigcup_{k=1}^K (T^{(d)})^{-k} \overline{\O}( U^{(d)} ,\tau_d).$$
Therefore $\intt _{N_{d}(X)}\overline{\O}(U^{(d)},\tau_d)\neq \emptyset.$
\end{proof}

\begin{lem}\label{eli-thm} Let $d\in\N$, $(X,T)$ be a distal minimal t.d.s. and $\psi_d:X\lra F_d$ be the factor map to its largest distal factor of order $d$.
Then there is a dense $G_\delta$ set $\Omega$ of $X$ such that if $x\in \Omega$, then $\overline{Orb}(x^{(d)},\tau_d)$ is $\psi_{d}^{(d)}$ is saturated and
$(\psi_{d}^{(d+2)})^{-1}N_{d+2}(F_d)=N_{d+2}(X)$.
\end{lem}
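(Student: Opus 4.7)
The plan is to prove the two assertions of the lemma separately. For the first assertion I would invoke Theorem~\ref{thm-Glasner-distalcase}. Writing $\phi_d : F_d \to F_{d-1}$ for the canonical map of the Furstenberg tower, so that $\psi_{d-1} = \phi_d \circ \psi_d$, the theorem provides (for $d \geq 2$; the case $d = 1$ being vacuous since $X$ is minimal) a dense $G_\delta$ set $\Omega \subseteq X$ on which $\overline{Orb}(x^{(d)},\tau_d)$ is $\psi_{d-1}^{(d)}$-saturated. Since each fiber of $\psi_d$ lies inside a fiber of $\psi_{d-1}$, any $\psi_{d-1}^{(d)}$-saturated set is automatically $\psi_d^{(d)}$-saturated, and the first claim follows on the same $\Omega$.

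For the second assertion, $(\psi_d^{(d+2)})^{-1}N_{d+2}(F_d) = N_{d+2}(X)$, the inclusion $N_{d+2}(X) \subseteq (\psi_d^{(d+2)})^{-1}N_{d+2}(F_d)$ is immediate from the equivariance of $\psi_d$. For the reverse inclusion my plan is to show that $(\psi_d^{(d+2)})^{-1}N_{d+2}(F_d)$ is itself a single minimal subsystem of $X^{d+2}$ under the group $G := \langle \sigma_{d+2}, \tau_{d+2}\rangle$. Since $N_{d+2}(X)$ is already $G$-minimal by Theorem~\ref{thm-Glasner}, contains the diagonal $\Delta_{d+2}(X)$, and lies inside the preimage, its equality with the preimage would then follow from minimality. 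To prove the $G$-minimality of the preimage I would use that $\psi_d^{(d+2)} : X^{d+2} \to F_d^{d+2}$ is a distal factor map (distality is preserved under products) and that by the Furstenberg structure theorem $\psi_d$ decomposes as an iterated equicontinuous extension from $F_d$ up to $X = F_\eta$; transfinite induction along this tower reduces the problem to a single equicontinuous step.

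The main obstacle lies precisely in this equicontinuous inductive step. At a single equicontinuous extension $Z \to Z_0$ with compact structure group $K$, the natural translation action on the fibered product $(\pi^{(d+2)})^{-1}(\Delta_{d+2}(Z_0))$ is by $K^{d+2}$, whereas the $G$-orbit closure of the diagonal $\Delta_{d+2}(Z)$ is only invariant under the diagonal copy of $K$ a priori. What upgrades this to full $K^{d+2}$-invariance is that the $d+2$ distinct frequencies $T, T^2, \ldots, T^{d+2}$ appearing in $\tau_{d+2}$ act with enough independence on fibers -- a Weyl-type equidistribution inside the compact structure group -- so that the $G$-orbit of $\Delta_{d+2}(Z)$ already sweeps all of $K^{d+2}$ modulo the base orbit. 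Verifying this equidistribution carefully at each rung of the Furstenberg tower, and passing through transfinite joins at limit stages, is the technical heart of the proof.
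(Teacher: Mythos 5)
Your treatment of the first assertion is correct and coincides with the paper's: for $d\ge 2$, Theorem~\ref{thm-Glasner-distalcase} gives a dense $G_\delta$ set on which $\overline{\O}(x^{(d)},\tau_d)$ is $\psi_{d-1}^{(d)}$-saturated, and since $\psi_{d-1}=\phi_d\circ\psi_d$, Lemma~\ref{lem-saturated-basic}(1) passes this to $\psi_d^{(d)}$-saturation; the $d=1$ case is indeed vacuous.

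For the second assertion, however, your route diverges from the paper's and contains a genuine gap. The paper's argument is short and already available: applying Theorem~\ref{thm-Glasner-distalcase} with $d+1$ in place of $d$ shows that $F_d$ is a $(d+1)$-step topological characteristic factor of $X$; since $\psi_d$ is a factor map between minimal distal systems it is open; and Lemma~\ref{lem-saturated} (proved independently in the next section, with parameter $d+1$) then yields $(\psi_d^{(d+2)})^{-1}N_{d+2}(F_d)=N_{d+2}(X)$ directly. Your plan instead tries to establish $G$-minimality of the preimage from scratch by transfinite induction along the Furstenberg tower, reducing to a ``Weyl-type equidistribution'' claim at a single equicontinuous rung. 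This step is exactly the nontrivial content that Glasner's characteristic-factor theorem packages, and you neither prove it nor formulate it correctly: you describe the $G$-orbit of $\Delta_{d+2}(Z)$ as sweeping out all of $K^{d+2}$ over $\Delta_{d+2}(Z_0)$, but in the inductive step the base set is $N_{d+2}(Z_0)$, not the diagonal, and even over the diagonal the $G$-orbit closure of $\Delta_{d+2}(Z)$ in general does \emph{not} fill the full fibre $K^{d+2}$ (already for a circle rotation the $\langle\sigma_{d+2},\tau_{d+2}\rangle$-orbit closure of the diagonal is a proper subgroup of the torus, namely the arithmetic-progression configurations). So the ``equidistribution'' you invoke is false as stated, and what would actually be required is exactly the saturation property encoded in Theorem~\ref{thm-Glasner-distalcase} and Lemma~\ref{lem-saturated}. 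In short: you have reconstructed the problem rather than solved it, and missed the two-line deduction the paper intends.
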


\begin{proof}
The first part is from Theorem \ref{thm-Glasner-distalcase}, and the sencond part follows from Lemma \ref{lem-saturated} in the sequel.
\end{proof}

\subsection{Open extensions}\
\medskip

Let $(X,T)$ and $(Y,S)$ be t.d.s. and let $\pi: X \to Y$ be a factor map.
One says that:
\begin{enumerate}
  \item $\pi$ is an {\it open} extension if it is open as a map
  \item $\pi$ is an {\it almost one to one} extension  if there
exists a dense $G_\d$ set $\Omega\subseteq X$ such that
$\pi^{-1}(\{\pi(x)\})=\{x\}$ for any $x\in \Omega$;
\end{enumerate}


We will often use the following
construction which is due originally to Veech
(see \cite[Theorem 3.1]{Veech})

\begin{thm}\label{O}
Given a factor map $\pi:X\rightarrow Y$ between minimal systems
$(X,T)$ and $(Y,S)$, there exists a commutative diagram of factor
maps (called {\em O-diagram})
\[
\begin{CD}
X @<{\sigma^*}<< X^*\\
@V{\pi}VV      @VV{\pi^*}V\\
Y @<{\tau^*}<< Y^*
\end{CD}
\]
such that

\noindent (a) $\sigma^*$ and $\tau^*$ are almost one to one
extensions;

\noindent (b) $\pi^*$ is an open extension;

\noindent (c) $X^*$ is the unique minimal set in $R_{\pi
\tau^*}=\{(x,y)\in X\times Y^*: \pi(x)=\tau^* (y)\}$ and
$\sigma^*$ and $\pi^*$ are the restrictions to $X^*$ of the
projections of $X\times Y^*$ onto $X$ and $Y^*$ respectively.
\end{thm}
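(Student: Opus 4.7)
The plan is to follow Veech and use the hyperspace $2^X$ of closed non-empty subsets of $X$, equipped with the Hausdorff topology and the induced action $T_*F=TF$, in order to resolve the upper-semicontinuity of the fiber map of $\pi$. First I would define $\varphi:Y\to 2^X$ by $\varphi(y)=\pi^{-1}(y)$. Equivariance of $\pi$ gives $\varphi(Sy)=T_*\varphi(y)$, and $\varphi$ is upper semicontinuous (standard for fiber maps of continuous surjections), so its continuity set $C(\varphi)$ is a dense $G_\delta$ in $Y$. I would then set $Y^*$ to be a minimal subsystem of the closed $(S\times T_*)$-invariant set $\overline{\{(y,\varphi(y)):y\in Y\}}\subset Y\times 2^X$, and let $\tau^*:Y^*\to Y$ be the projection onto the first coordinate. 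Surjectivity of $\tau^*$ is immediate from minimality of $Y$. For any $y\in C(\varphi)$ and any net $(y_\alpha,\varphi(y_\alpha))\to(y,F)$, continuity of $\varphi$ at $y$ forces $F=\varphi(y)$; hence $(\tau^*)^{-1}(y)\cap Y^*=\{(y,\varphi(y))\}$ is a singleton for each $y\in C(\varphi)$, making $\tau^*$ almost one-to-one.

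Next I would form the fiber product $R_{\pi\tau^*}=\{(x,y^*)\in X\times Y^*:\pi(x)=\tau^*(y^*)\}$, a closed $T\times(S\times T_*)$-invariant subset of $X\times Y^*$, and define $X^*$ to be a minimal subsystem of it. The projections $\sigma^*=p_1|_{X^*}:X^*\to X$ and $\pi^*=p_2|_{X^*}:X^*\to Y^*$ are equivariant factor maps, each surjective by minimality of $X$ and of $Y^*$ respectively. For $x\in\pi^{-1}(C(\varphi))$, the singleton-fiber property of $\tau^*$ above forces $(\sigma^*)^{-1}(x)\cap X^*$ to consist of at most one point; combined with the standard observation that, for a factor map between minimal systems, the set of $x$ with singleton fiber is either empty or residual, this yields $\sigma^*$ almost one-to-one. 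Property (a) follows.

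To prove openness of $\pi^*$ in (b), and the uniqueness in (c), the plan is to identify $X^*$ with
\[
\widetilde X=\{(x,(y,F))\in X\times Y^*:x\in F\},
\]
a closed, $T\times(S\times T_*)$-invariant, nonempty subset of $R_{\pi\tau^*}$ (closedness uses that $x_n\in F_n$ with $x_n\to x$ and $F_n\to F$ in Hausdorff forces $x\in F$). Granting $X^*=\widetilde X$, the fiber of $\pi^*$ over $(y,F)\in Y^*$ is $F\times\{(y,F)\}$, and this depends continuously on $(y,F)$ in the Hausdorff metric on $2^{X^*}$ simply because $(y_n,F_n)\to(y,F)$ in $Y^*\subset Y\times 2^X$ means $F_n\to F$ in Hausdorff. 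Continuity of $(\pi^*)^{-1}$ as a set-valued map is equivalent to openness of $\pi^*$, giving (b). Uniqueness in (c) follows as well: any minimal $M\subset R_{\pi\tau^*}$ must meet the fiber over some $(y,\varphi(y))\in Y^*$ with $y\in C(\varphi)$, where the $R_{\pi\tau^*}$-fiber and the $\widetilde X$-fiber both coincide with $\pi^{-1}(y)\times\{(y,\varphi(y))\}$; closedness and invariance of $\widetilde X$ give $M\subseteq\widetilde X$, and minimality of $\widetilde X$ gives $M=\widetilde X=X^*$.

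The main obstacle is precisely the identification $X^*=\widetilde X$, i.e.\ the minimality of $\widetilde X$. The inclusion that any minimal subset of $R_{\pi\tau^*}$ lies in $\widetilde X$ is the easy half, as just sketched. The hard half is showing that any point $(x_0,(y_0,F_0))\in\widetilde X$ has a dense orbit in $\widetilde X$: given a target $(x',(y',F'))\in\widetilde X$, one must produce $n_k\in\mathbb Z$ with $T^{n_k}x_0\to x'$ and $(S^{n_k}y_0,T^{n_k}F_0)\to(y',F')$ simultaneously. The construction of $Y^*$ from $\varphi$ is engineered so that this joint convergence is available: since $F'$ arises as a Hausdorff limit of push-forward fibers along an orbit, every $x'\in F'$ can be approached by $T^{n_k}x_0$ along a subsequence realizing the concurrent $Y^*$-convergence, and this simultaneous recurrence is the technical heart of the argument.
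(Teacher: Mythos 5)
The paper cites this result to Veech rather than proving it, so I compare your proposal to the standard Veech construction, which you do follow. Your framework is correct: $\varphi(y)=\pi^{-1}(y)$ is upper semicontinuous, its continuity set $C(\varphi)$ is residual, $Y^*$ (a minimal subsystem of $\overline{\{(y,\varphi(y)):y\in Y\}}$) is an almost one-to-one extension of $Y$ with singleton fibers over $C(\varphi)$, $X^*$ is taken in the fiber product $R_{\pi\tau^*}$, and, granting $X^*=\widetilde X=\{(x,(y,F))\in X\times Y^*:x\in F\}$, the openness of $\pi^*$ follows since its fiber map $(y,F)\mapsto F\times\{(y,F)\}$ is manifestly continuous in the Hausdorff topology. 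The easy half of (c), that any minimal $M\subset R_{\pi\tau^*}$ lies in $\widetilde X$, is also handled correctly.

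The sketch you give for the hard half, the minimality of $\widetilde X$, does not work as written. Fixing $(x_0,(y_0,F_0))\in\widetilde X$ and a target $(x',(y',F'))\in\widetilde X$, you propose to secure $(S^{n_k}y_0,T^{n_k}F_0)\to(y',F')$ in $Y^*$ and then pass to a subsequence along which $T^{n_k}x_0\to x'$. But $T^{n_k}x_0$ is one determined point inside $T^{n_k}F_0$ for each $n_k$, so a subsequence only furnishes one limit point of $F'$, not an arbitrary preassigned $x'\in F'$. The argument must be reversed and must exploit the almost one-to-one structure on the $\sigma^*$ side. First observe that $Y^*$, chosen as \emph{a} minimal subsystem, is in fact \emph{the unique} one, equal to $\overline{\{(y,\varphi(y)):y\in C(\varphi)\}}$, since every minimal subset must contain $(y,\varphi(y))$ for all $y\in C(\varphi)$. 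Consequently $(\sigma^*)^{-1}(\pi^{-1}(C(\varphi)))$ is dense in $\widetilde X$: given $(x',(y',F'))\in\widetilde X$, take $y_\alpha\in C(\varphi)$ with $(y_\alpha,\pi^{-1}(y_\alpha))\to(y',F')$, pick $x_\alpha\in\pi^{-1}(y_\alpha)$ with $x_\alpha\to x'\in F'$, and then $(x_\alpha,(y_\alpha,\pi^{-1}(y_\alpha)))\to(x',(y',F'))$. Moreover, over any $x$ with $\pi(x)\in C(\varphi)$, the $\sigma^*$-fiber of $\widetilde X$ is the singleton $\{(x,(\pi(x),\pi^{-1}(\pi(x))))\}$. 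Any minimal $M\subset\widetilde X$ has $\sigma^*(M)=X$ by minimality of $X$, hence $M$ contains this dense set of singleton-fiber points and therefore $M=\widetilde X$. Equivalently, in your recurrence picture: when $y'\in C(\varphi)$, choose $n_k$ by minimality of $X$ so that $T^{n_k}x_0\to x'$; then $(S^{n_k}y_0,T^{n_k}F_0)\to(y',F')$ is automatic, because any subsequential limit lies in $Y^*$ over $y'$ and that fiber is a singleton; the density of such targets $(x',(y',F'))$ concludes. With this correction your proof is complete.
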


We note that this diagram is canonical. In particular, when the map $\pi$ is open, we have $X^*=X$.

\subsection{Some subsets of $\Z$}\
\medskip

A subset $S$ of $\mathbb{Z}$ is {\it syndetic} if it has a bounded
gap, i.e. there is $N \in \mathbb{N}$ such that $\{i, i+1, \ldots,
i+N\} \cap S \neq \emptyset$ for every $i \in \mathbb{Z}$. A subset $S\subset \Z$
is {\it thick} if it contains arbitrarily long runs of positive
integers, i.e., for every $n \in \mathbb{N}$ there exists some $a_n
\in \mathbb{Z}$ such that $\{a_n, a_n+1, \ldots, a_n+n\} \subset S$.

A subset $S$ of $\mathbb{Z}$ is {\it piecewise syndetic} if it is the intersection of a syndetic set with a thick set;
and it is {\it thickly syndetic} if for each $n\in\N$ there is a syndetic
subset $\{w^n_1,w^n_2, \ldots\}$ of $S$ such that
$\{w^n_i,w^n_i+1,\ldots,w^n_i+n\}\subset S$ for each $i\in\N$.
Denote by $\F_{ts}$ the family of all thickly syndetic sets. It is clear that if $F_1,F_2\in \F_{ts}$ so is $F_1\cap F_2$. That is, $\F_{ts}$ is a filter.

\section{The main tools used in proving Theorem A}

In this section we will introduce the main tools to show Theorem A. We start from the Saturation theorem.

\subsection{A Saturation Theorem}\
\medskip

In \cite{G94}
Glasner  proved an auxiliary theorem in order to prove saturation with respect to the Furstenberg's tower.
By simplifying the assumptions of this theorem, we find that it applies to a more general setup.
We will discuss
this theorem, and other lemmas related to
saturation properties in the sequel.

\begin{lem}\cite[Lemma 2.1.]{G90}\label{Lem-Glasner}
Let $\phi: X\rightarrow Y$ be an open map of compact metric spaces. Let $\V=\{V\subseteq X: V$ open and $\phi(V)=Y\}$.
Then there exists a countable subset $\{V_i\}_{i=1}^\infty$ of $\V$ such that every element of $\V$ contains some $V_i$.
\end{lem}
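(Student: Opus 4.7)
The plan is to exploit second countability of $X$, openness of $\phi$, and compactness of $Y$, with no real obstacles beyond putting these three facts together correctly.

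First, since $X$ is a compact metric space it is second countable, so I fix a countable base $\{B_n\}_{n=1}^\infty$ for the topology of $X$. I then form the countable collection $\mathcal{W}$ consisting of all finite unions $W = B_{n_1}\cup\cdots\cup B_{n_k}$ with the property that $\phi(W) = Y$. Since the set of finite subsets of $\N$ is countable, $\mathcal{W}$ is countable; by construction $\mathcal{W}\subseteq\V$, so I may enumerate its elements as $\{V_i\}_{i=1}^\infty$.

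Next, given an arbitrary $V\in\V$, I need to locate some $V_i\subseteq V$. I look at the subfamily $\mathcal{B}_V = \{B_n : B_n\subseteq V\}$, which forms a base for the relative topology on $V$. Because $\phi$ is open, each $\phi(B_n)$ is open in $Y$. Moreover $\{\phi(B_n) : B_n\in\mathcal{B}_V\}$ is an open cover of $Y$: given $y\in Y = \phi(V)$, choose $x\in V$ with $\phi(x) = y$, then pick $B_n\in\mathcal{B}_V$ with $x\in B_n\subseteq V$, so $y\in\phi(B_n)$. Compactness of $Y$ then yields a finite subcover $\phi(B_{n_1}),\ldots,\phi(B_{n_k})$. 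Setting $W = B_{n_1}\cup\cdots\cup B_{n_k}$, we have $W\subseteq V$ and $\phi(W) = \bigcup_{j} \phi(B_{n_j}) = Y$, so $W\in\mathcal{W}$. Thus $W = V_i$ for some $i$, and $V_i\subseteq V$, completing the proof.

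The only point where one has to be a bit careful is making sure that the finite cover of $Y$ obtained from compactness comes from basis elements \emph{contained in $V$}, rather than from arbitrary basis elements whose $\phi$-image meets $Y$; this is why I first restrict to $\mathcal{B}_V$ before invoking openness of $\phi$ and compactness of $Y$. Beyond this, the argument is essentially a standard second-countability-plus-compactness construction, and I expect no hidden difficulties.
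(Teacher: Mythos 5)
Your proof is correct, and it takes a cleaner route than the paper's. The paper first establishes an auxiliary fact: for each $V\in\V$ there is a \emph{closed} subset $L_V\subseteq V$ with $\phi(L_V)=Y$. That step itself requires a separate argument (looking at $V_\ep=\{x\in V:d(x,V^c)\ge\ep\}$, supposing $\phi(V_{1/n})\ne Y$ for all $n$, extracting a convergent sequence $y_n\to y$ and using openness of $\phi$ to produce a contradiction). Only then does the paper invoke compactness of $L_V$ to cover it by finitely many basis elements sitting inside $V$. You bypass this intermediate construction entirely: since $\phi$ is open, the images $\phi(B_n)$ for $B_n\subseteq V$ are open and cover $Y$ (using $\phi(V)=Y$), and compactness of $Y$ directly yields the required finite union. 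Both arguments produce the same countable family --- finite unions of basis elements mapping onto $Y$ --- but yours moves the compactness argument from the source (via $L_V$) to the target $Y$, which eliminates the need for the auxiliary closed-selection step and is shorter. Your point of care at the end (restricting to $\mathcal{B}_V$ before covering $Y$) is exactly the right thing to flag, and it is handled correctly.
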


\begin{proof}
First we show that for each $V\in\V$, there exists a closed subset $L_V\subset V$ with $\phi(L_V)=Y$. Let $V\in \V$. If $V^c=\emptyset$, then set $L_V=V$. If $V^c\not =\emptyset$, then for $\ep>0$ denote $$V_\ep=\{x\in V: d(x, V^c)\ge \ep\}.$$
If for every $n\in \N$, $\phi(V_{1/n})\neq Y$, then there exits $y_n\in Y\setminus \phi(V_{1/n})$. Let $\lim_{n\to\infty}y_{n}=y$ without loss of generality. By assumption there exits $x\in V$ with $\phi(x)=y$. Let $\d=d(x,V^c)$. By our assumption, $\d>0$. Since $\phi$ is open we can find $x_n\in X$ with $\phi(x_n)=y_n$ such that $\lim _{n\to\infty} x_n=x$. But then $x_n$ is eventually in $V_{\d/2}$ and $y_n\in \phi(V_{\d/2})$, a contradiction. Thus we proved that for every $V\in \V$ there exists a closed subset $L_V\subseteq V$ with $\phi(L_V)=Y$.

Let $\U=\{U_i\}_{i=1}^\infty$ be a countable basis for open sets on $X$. Then for each $V\in \V$, one can find a finite subset $\{U_{i_1},\ldots, U_{i_k}\}$ covers $L_V$ and satisfies $\bigcup_{j=1}^kU_{i_j}\subseteq V$. Thus
$$\V_0=\Big\{V=\bigcup_{j=1}^kU_{i_j}: U_{i_j}\in \U\ \text{and }\ \phi(V)=Y\Big\}$$
is the required countable collection of subsets.
\end{proof}

We derive a useful lemma from \cite[Lemma 3.3]{G94}. For completeness, we include a proof.

\begin{thm}[Saturation theorem]\label{Thm-Glasner's-Lemma}
Let $I$ be some index set and for each $\zeta\in I$ let $\sigma_\zeta: (X_\zeta,\Gamma)\rightarrow (Z_\zeta, \Gamma)$ be an extension of t.d.s.( not necessarily minimal systems), where $\Gamma$ is a discrete countable group. Let
$$(X,\Gamma)=(\prod_{\zeta\in I} X_\zeta, \Gamma), \ \ (Z,\Gamma)=(\prod_{\zeta\in I} Z_\zeta, \Gamma)$$
and let $\sigma: X\rightarrow Z$ be the product homomorphisms.
Let $N_Z$ be a non-empty closed $\Gamma$-invariant subset of $Z$, and let $N_X=\sigma^{-1}(N_Z)$.

Let $Q$ be a closed subset of $N_X$. Suppose that
\begin{enumerate}
 \item $\sigma$ is open, i.e. $\sigma_\zeta$ is open for each $\zeta \in I$;
 \item For every non-empty relatively open set $U\subseteq {N_X}$, one has that $$\overline{\O}(U,\Gamma)=\sigma^{-1}\big(\sigma (\overline{\O}(U,\Gamma))\big);$$
   \item $\overline{\O}(Q, \Gamma)=N_X$;
   \item for every non-empty relatively open subset $U$ of $Q$, ${\rm int}_{N_X}(\overline{\O}(U, \Gamma))\neq\emptyset.$
\end{enumerate}
Then there exists a dense $G_\delta$ subset $\Omega$ of $Q$ such that ${\bf x}\in \Omega$ implies $\overline{\O}({\bf x},\Gamma)$ is $\sigma$-saturated.
\end{thm}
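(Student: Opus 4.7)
The plan is a Baire category argument carried out in the compact metric space $Q$. I fix a countable basis $\{U_n\}_{n\in\mathbb{N}}$ for the topology of $N_X$ and set $W_n=\sigma^{-1}(\sigma(U_n))\cap N_X$, which is open in $N_X$ because the openness of $\sigma$ hypothesized in (1) descends to openness of $\sigma|_{N_X}:N_X\to N_Z$ (any open $U\subseteq N_X$ is of the form $U'\cap N_X$ with $U'\subseteq X$ open, and one checks $\sigma(U)=\sigma(U')\cap N_Z$). The key observation is that $L_{\mathbf{x}}=\overline{\O}(\mathbf{x},\Gamma)$ fails to be $\sigma$-saturated if and only if there is some $n$ with $L_{\mathbf{x}}\cap U_n=\emptyset$ but $L_{\mathbf{x}}\cap W_n\neq\emptyset$: a witness $\mathbf{y}\in\sigma^{-1}(\sigma(L_{\mathbf{x}}))\setminus L_{\mathbf{x}}$ can be isolated from the closed set $L_{\mathbf{x}}$ by a basic $U_n$, and the matching $\mathbf{z}\in L_{\mathbf{x}}$ with $\sigma(\mathbf{z})=\sigma(\mathbf{y})$ then lies in $W_n$. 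Accordingly I set
$$\Omega_n=\{\mathbf{x}\in Q:L_{\mathbf{x}}\cap U_n\neq\emptyset\}\cup\{\mathbf{x}\in Q:L_{\mathbf{x}}\cap W_n=\emptyset\},\qquad \Omega=\bigcap_n\Omega_n,$$
so that every $\mathbf{x}\in\Omega$ automatically has $L_{\mathbf{x}}$ $\sigma$-saturated.

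That each $\Omega_n$ is $G_{\delta}$ in $Q$ is routine: writing $A_n=\bigcap_{g\in\Gamma}g^{-1}(U_n^c)$ (closed) and $D_n=\bigcup_{g\in\Gamma}g^{-1}(W_n)$ (open), the complement $Q\setminus\Omega_n=A_n\cap D_n\cap Q$ is locally closed, hence $F_{\sigma}$ in the metric space $Q$.

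The substantive step is the density of $\Omega_n$. Suppose for contradiction that some non-empty relatively open $U\subseteq Q$ is contained in $Q\setminus\Omega_n$, so that $\Gamma U\cap U_n=\emptyset$ while every orbit $\Gamma\mathbf{x}$, $\mathbf{x}\in U$, meets $W_n$. Let $B=\overline{\O}(U,\Gamma)$; since $U_n$ is open, $B\cap U_n=\emptyset$. By hypothesis (4), $V':=\intt_{N_X}(B)$ is non-empty and open in $N_X$, and by hypothesis (2) applied to $V'$, the orbit closure $\overline{\O}(V',\Gamma)$ is $\sigma$-saturated. Because $\overline{\O}(V',\Gamma)\subseteq B$ is disjoint from $U_n$, saturation forces $\overline{\O}(V',\Gamma)\cap W_n=\emptyset$: any $\mathbf{b}$ in this intersection would have $\sigma(\mathbf{b})\in\sigma(U_n)$, and then the full $\sigma$-fibre of $\sigma(\mathbf{b})$---hence some point of $U_n$ itself---would be pulled into $\overline{\O}(V',\Gamma)$. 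On the other hand $\Gamma U$ is dense in $B$ and $V'$ is a non-empty relatively open subset of $B$, so we may choose $\mathbf{z}=g\mathbf{x}_0\in V'\cap\Gamma U$ with $\mathbf{x}_0\in U$; picking $h\in\Gamma$ with $h\mathbf{x}_0\in W_n$ yields $hg^{-1}\mathbf{z}=h\mathbf{x}_0\in\Gamma V'\cap W_n\subseteq\overline{\O}(V',\Gamma)\cap W_n$, the desired contradiction. Hence $\Omega_n$ is dense, and $\Omega$ is a dense $G_{\delta}$ in the Baire space $Q$.

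The main obstacle I foresee is precisely this density step, where open sets living in two different ambient spaces must be linked: $U$ is open only in the smaller space $Q$, whereas hypothesis (2) is formulated only for open subsets of the larger space $N_X$. Hypothesis (4) is the essential bridge, upgrading the orbit closure of an open subset of $Q$ to a set with non-empty $N_X$-interior $V'$, and hypothesis (2) is then what propagates the emptiness of $\overline{\O}(V',\Gamma)\cap U_n$ to the enlarged set $W_n=\sigma^{-1}(\sigma(U_n))$; the density of $\Gamma U$ in $B$ is what finally closes the loop. (Hypothesis (3) is not explicitly used in the denseness argument; it ensures that $Q$ is non-trivial in $N_X$ and is typically needed in applications to pass from saturation on $Q$ back to statements about $N_X$.)
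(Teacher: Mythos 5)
Your proof is correct, and it takes a genuinely different and more economical route than the paper's. The paper's argument runs through an auxiliary Claim of the following shape: for every closed set $S\subseteq N_Z$ equal to the closure of its interior, there is a dense $G_\delta$ subset of $Q_S=\{x\in Q:\O(x,\Gamma)\cap\sigma^{-1}[\intt_{N_Z}S]\neq\emptyset\}$ on which $L_x$ swallows a full fiber $\sigma^{-1}(z)$ for some $z\in S$; this Claim is proved with the aid of Glasner's countability lemma (Lemma \ref{Lem-Glasner}) about extracting a countable cofinal subfamily of open sets $V$ with $\sigma[V]\supseteq S$, and the final $\Omega$ is then built by a careful inductive intersection over a basis $\{B_j\}$ of $N_Z$, followed by a contradiction argument against the maximal saturated subset $L_\sigma\subseteq L_{x_0}$. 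You bypass all of that: the reformulation of non-saturation of $L_x$ as the existence of a basic open $U_n\subseteq N_X$ with $L_x\cap U_n=\emptyset$ yet $L_x\cap W_n\neq\emptyset$, where $W_n=\sigma^{-1}(\sigma(U_n))$, turns the saturated-orbit-closure locus directly into a countable intersection of sets $\Omega_n$ each visibly (open $\cup$ closed), hence $G_\delta$, and the density of each $\Omega_n$ is a single short contradiction feeding (4) into (2). No covering lemma, no $Q_S$, no inductive nesting of $G_\delta$'s.

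What each approach buys: yours is shorter and makes the role of each hypothesis transparent — (1) gives openness of $\sigma|_{N_X}$ and hence of $W_n$; (2) gives saturation of $\overline{\O}(V',\Gamma)$ for $V'$ open in $N_X$; (4) is the bridge from relatively open subsets of the small space $Q$ to sets with non-empty interior in $N_X$; (3) indeed plays no role in the argument (it guarantees $Q$ is "large" in $N_X$, which matters for the applications of the theorem, but not for the Baire category scheme itself). The paper's version is more structured around fibers and produces, as a by-product of its Claim, slightly more localized information (that $L_x$ contains a fiber over a point of a prescribed $S$), which could be of independent interest, but it is not needed for the statement as given. One small point worth making explicit in your write-up: both arguments tacitly use that $N_X$ and $N_Z$ are second countable (needed for the countable basis and for Glasner's lemma respectively), so the theorem, as proved, requires $N_X$ metrizable; this is automatic in the paper's applications where $I$ is finite and $X$ is a compact metric space.
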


\begin{proof}
Let $S$ be a non-empty closed subset of $N_Z$ for which $\cl(\intt_{N_Z}(S))=S$, and let
\begin{equation*}
  Q_S=\{x\in Q: \O(x,\Gamma)\cap \sigma^{-1}[\intt_{N_Z}(S)]\neq \emptyset\}.
\end{equation*}
Then by (3), $Q_S$ is a non-empty open subset of $Q$.

\medskip

\noindent{\bf Claim:}\ {\em
There exists a dense $G_\d$ subset $\Omega_S$ of $Q_S$ such that for each $x\in \Omega_S$, there exits $z\in S$ with $$\sigma^{-1}(z)\subseteq L_x=\overline{\O}(x,\Gamma).$$}

\noindent {\em Proof of Claim.}\
Let
\begin{equation*}
  \V=\{V\subseteq N_X: V\cap \sigma^{-1}[S] \ \text{is relatively open in $\sigma^{-1}[S]$ and } \ \sigma[V]\supseteq S\}.
\end{equation*}
Since $\sigma(U\cap \sigma^{-1}(S))=\sigma(U)\cap S$ for each open set $U$ of $N_X$,  by openness of $\sigma$ it follows that $\sigma|_{\sigma^{-1}[S]}: \sigma^{-1}(S)\rightarrow S$ is open. Therefore by Lemma \ref{Lem-Glasner} there exists a countable sub-collection $\{V_k\}_{k=1}^\infty$ of $\V$ such that each $V\in \V$ contains an element of $\{V_k\}_{k=1}^\infty$.

For each $k\in \N$, $V_k\cap \sigma^{-1}(S)$ is relatively open in $\sigma^{-1}(S)$ and $\sigma[V_k]\supseteq S$, so by the fact that $\intt_{N_Z} (S)\neq \emptyset$, we conclude that $\intt_{N_X}(V_k)\neq \emptyset$. By condition (2) and $\cl(\intt_{N_Z}(S))=S$ we get
\begin{equation}\label{g1}
  \overline{\O}(\intt_{N_X} (V_k),\Gamma) = \sigma^{-1}\big[\sigma[\overline{\O} (\intt_{N_X} (V_k),\Gamma)]\big]\supseteq \overline{\O}(\sigma^{-1}[S],\Gamma).
\end{equation}

For each $k\in \N$, let
\begin{equation*}
  \Lambda_k=\{x\in Q_S: \O(x,\Gamma)\cap V_k\neq \emptyset\}.
\end{equation*}
Let $U$ be a non-empty open subset of $Q_S$, then $U$ is a relatively open subset of $Q$ (since $Q_S$ is an open subset of $Q$) and by (4) $\intt_{N_X}\overline{\O}(U,\Gamma)\neq \emptyset$.
From this fact we verify that
$$\overline{\O}(U,\Gamma)\cap {\O}(V_k,\Gamma)\neq \emptyset.$$
For each $x\in U\subset Q_S$, we have
$$\O(x,\Gamma)\cap \sigma^{-1}[\intt_{N_Z}(S)]\neq \emptyset.$$
Thus $x\in \O(\sigma^{-1}[\intt_{N_Z}(S)],\Gamma),$ i.e.
$$U\subset \O(\sigma^{-1}[\intt_{N_Z}(S)],\Gamma).$$
By \eqref{g1},
$$\overline{\O}(U,\Gamma)\subset
\overline{\O}(\sigma^{-1}[\intt_{N_Z}(S)],\Gamma)\subset  \overline{\O}(\intt_{N_X} (V_k),\Gamma). $$
By $\intt_{N_X}\overline{\O}(U,\Gamma)\neq \emptyset$, we have that
$$\overline{\O}(U,\Gamma)\cap {\O}(V_k,\Gamma)\neq \emptyset.$$
Hence $\O(U,\Gamma)\cap V_k\neq \emptyset$, and it follows that
$\Lambda_k$ is an open dense subset of $Q_S$. The set $$\Omega_S=\bigcap_{k=1}^\infty \Lambda_k$$ is therefore a dense $G_\d$ subset of $Q_S$. In particular, for $S=N_Z$, $\Omega_{N_Z}$ is a dense $G_\d$ subset of $Q$.

Now for $x\in \Omega_S$ we show that there exits $z\in S$ with $\sigma^{-1}(z)\subseteq L_x=\overline{\O}(x,\Gamma)$. Let $V=N_X\setminus L_x$. Then $V$ is an open $\Gamma$-invariant subset of $N_X$ and, if $\sigma[V]\supseteq S$, then $V\in \V$ and for some $k$, $V_k\subseteq V$. Since then, however, $x\in \overline{\O}(V_k,\Gamma)\subseteq V$, we get a contradiction. Thus there exists $z\in S$ with $\sigma^{-1}(z)\cap V=\emptyset$, i.e. $\sigma ^{-1}(z)\subseteq L_x$.

The proof of Claim is complete.
\hfill $\square$
\medskip

Let $\{ B_0=N_Z, B_1,B_2,\ldots\}$ be a basis for the topology of $N_Z$. Define $S_j=\overline{B_j}$ and, inductively, we define dense $G_\d$ subset $\Omega_j$ of $Q$ as follows. Let $\Omega_0=\Omega_{S_0}=\Omega_{N_Z}$. We put
\begin{equation*}
  \Omega_{j+1}=(\Omega_j\cap \Omega_{S_{j+1}})\cup \Big(\Omega_j\cap (\overline{Q_{S_{j+1}}})^c\Big).
\end{equation*}
Notice that this is a disjoint union. Finally let $\Omega=\bigcap_{j=0}^\infty \Omega_j$.

Let $x_0\in \Omega$ and denote $L=\overline{\O}(x_0,\Gamma)$. Put
\begin{equation*}
  L_\sigma=\{x\in L:\sigma^{-1}(\sigma(x))\subseteq L\}.
\end{equation*}
Since $\sigma$ is open, $L_\sigma$ is closed and clearly $L_\sigma$ is $\sigma$-saturated. If  $L_\sigma=L$, we are done. Otherwise, let $z_0\in \sigma(L)\setminus \sigma(L_\sigma)$. Since $N_Z\setminus \sigma(L_\sigma)$ is open and $\{B_i\}_{i=0}^\infty$ is a base of $N_Z$, there is some $j$ such that $z_0\in B_j\subseteq S_j\subseteq N_Z\setminus \sigma(L_\sigma)$.

Now
\begin{equation*}
  x_0\in \Omega\subset \Omega_j=(\Omega_{j-1}\cap \Omega_{S_{j}})\cup\Big(\Omega_{j-1}\cap (\overline{Q_{S_{j}}})^c\Big).
\end{equation*}
Since $z_0\in \sigma(L)\cap B_j$, we have that
$$\O(x_0,\Gamma)\cap \sigma^{-1}(B_j)\neq \emptyset,$$  and hence  $x_0\in Q_{S_{j}}$. Therefore $x_0\in \Omega_{j-1}\cap \Omega_{S_j}\subset \Omega_{S_j}$. By Claim there exits $z\in S_j$ with $\sigma^{-1}(z)\subseteq L$, whence $\sigma^{-1}(z)\subseteq L_\sigma$; this contradicts with $S_j\subseteq N_Z\setminus \sigma(L_\sigma)$.
To sum up, for all ${x_0}\in \Omega$ we have that $\overline{\O}({x_0},\Gamma)$ is $\sigma$-saturated. The proof is completed.
\end{proof}

The following lemma was implicitly proved in \cite{G94}, we give a proof for completeness.
\begin{lem}\label{lem-saturated}
Let $\pi: (X,T)\rightarrow (Y,T)$ be an open extension of minimal systems and $d\in \N$. If $Y$ is
a
$d$-step topological characteristic factor of $X$, then $N_{d+1}(X)$ is $\pi^{(d+1)}$-saturated, i.e.
$$(\pi^{(d+1)})^{-1}(N_{d+1}(Y))=N_{d+1}(X).$$
\end{lem}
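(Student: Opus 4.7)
The plan is to show the nontrivial inclusion $(\pi^{(d+1)})^{-1}(N_{d+1}(Y))\subseteq N_{d+1}(X)$; the reverse containment is immediate from the $\Gamma$-equivariance of $\pi^{(d+1)}$, where $\Gamma=\langle\sigma_{d+1},\tau_{d+1}\rangle$. I begin with a structural observation. Setting $\tau'_{d+1}=\id\times T\times T^2\times\cdots\times T^d$, one has $\tau_{d+1}=\sigma_{d+1}\cdot\tau'_{d+1}$, hence $\langle\sigma_{d+1},\tau_{d+1}\rangle=\langle\sigma_{d+1},\tau'_{d+1}\rangle$ and consequently $N_{d+1}(X)=\overline{\O}(\D_{d+1}(X),\tau'_{d+1})$. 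In particular $\overline{\O}(x^{(d+1)},\tau'_{d+1})=\{x\}\times L_x\subseteq N_{d+1}(X)$ for every $x\in X$, and the analogous description holds in $Y$.

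Let $\Omega_0\subseteq X$ be the dense $G_\d$ set from the $d$-step characteristic-factor hypothesis, so $L_x=(\pi^{(d)})^{-1}(L_{\pi(x)})$ for $x\in\Omega_0$. Given $(x_0,\ldots,x_d)\in(\pi^{(d+1)})^{-1}(N_{d+1}(Y))$ with $y_i=\pi(x_i)$, I will use the description of $N_{d+1}(Y)$ above to pick $w_k\in Y$ and $n_k\in\Z$ with $w_k\to y_0$ and $T^{in_k}w_k\to y_i$ for $i=1,\ldots,d$, and then lift this approximation to $X$ in such a way that the base point lies in $\Omega_0$; once this is done, the rest is a closedness argument.

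The heart of the proof is the enhanced lift. Openness of $\pi$, combined with a Kuratowski--Ulam-type category argument for open surjections of compact metric spaces, shows that the set $Y^{\ast}=\{y\in Y:\pi^{-1}(y)\cap\Omega_0\text{ is dense in }\pi^{-1}(y)\}$ is comeager in $Y$. For each $k$, the collection of $w\in Y$ admitting some $n$ with $\rho(w,y_0)<1/k$ and $\rho(T^{in}w,y_i)<1/k$ for all $i$ is open and nonempty in $Y$, so it meets the comeager set $Y^{\ast}$; thus one may arrange $w_k\in Y^{\ast}$. Openness of $\pi$ then provides $v_k\in\Omega_0\cap\pi^{-1}(w_k)$ with $v_k\to x_0$, and for each $i=1,\ldots,d$ a lift $\tilde x_i^{(k)}\in\pi^{-1}(T^{in_k}w_k)$ with $\tilde x_i^{(k)}\to x_i$.

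To conclude, note that $(T^{n_k}w_k,\ldots,T^{dn_k}w_k)\in L_{w_k}$, being a $\tau_d$-iterate of $w_k^{(d)}$; the saturation $L_{v_k}=(\pi^{(d)})^{-1}(L_{w_k})$ (valid since $v_k\in\Omega_0$) then places $(\tilde x_1^{(k)},\ldots,\tilde x_d^{(k)})\in L_{v_k}$. Hence $(v_k,\tilde x_1^{(k)},\ldots,\tilde x_d^{(k)})\in\{v_k\}\times L_{v_k}\subseteq N_{d+1}(X)$, and the sequence converges to $(x_0,\ldots,x_d)$; closedness of $N_{d+1}(X)$ finishes the proof. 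The main obstacle is the Kuratowski--Ulam step, which is precisely where openness of $\pi$ is essential in guaranteeing the simultaneous constraints $v_k\in\Omega_0$, $\pi(v_k)=w_k$, and $v_k\to x_0$.
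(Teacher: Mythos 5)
Your proof is correct, but it takes a genuinely different and somewhat heavier route than the paper's. The paper only establishes the inclusion over the diagonal: it shows $(\pi^{(d+1)})^{-1}(\Delta_{d+1}(Y))\subseteq N_{d+1}(X)$ by approximating a fixed $x\in\pi^{-1}(y)$ with points of $\Omega_d$ (trivially possible since $\Omega_d$ is dense in $X$), so that $\{x_i\}\times\overline{\O}(x_i^{(d)},\tau_d)=\{x_i\}\times(\pi^{(d)})^{-1}(\overline{\O}(y_i^{(d)},\tau_d))$ lies in $N_{d+1}(X)$, and then passes to the limit. It then upgrades to $(\pi^{(d+1)})^{-1}(N_{d+1}(Y))\subseteq N_{d+1}(X)$ in one stroke via the identity $(\pi^{(d+1)})^{-1}\bigl(\overline{\O}(A,\tau_{d+1})\bigr)=\overline{\O}\bigl((\pi^{(d+1)})^{-1}(A),\tau_{d+1}\bigr)$, valid because $\pi^{(d+1)}$ is open. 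You instead attack an arbitrary point of $(\pi^{(d+1)})^{-1}(N_{d+1}(Y))$ directly; the difficulty this creates is that the approximating base points in $X$ must lie in $\Omega_0$ \emph{and} converge to the prescribed $x_0$, which forces you to invoke a Kuratowski--Ulam-type fact for open maps (that $\{y:\pi^{-1}(y)\cap\Omega_0\text{ is dense in }\pi^{-1}(y)\}$ is comeager). That fact is true (for each open dense $U_n\supset\Omega_0$ and each basic open $V\subset X$, the set $\pi(V)\setminus\pi(V\cap U_n)$ is nowhere dense because $\pi$ is open) and your deployment of it is sound, so your argument is valid; but the paper's trick of reducing to $\Delta_{d+1}(Y)$ and then taking the $\tau_{d+1}$-orbit closure buys a shorter proof that dispenses with the category argument entirely. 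Both approaches make essential use of openness of $\pi$, just at different junctures.
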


\begin{proof}
By the hypothesis, there is a dense $G_\delta$ subset $\Omega_d$ of $X$ such that for any $x\in \Omega_d$
	\begin{equation*}
	\overline{\O}({x}^{(d)},\tau_{d})
	\end{equation*}
is $\pi^{(d)}$-saturated. It is clear that
	\begin{equation*}
N_{d+1}(X)\subset (\pi^{(d+1)})^{-1}(N_{d+1}(Y)).
	\end{equation*}
Now we prove the converse.  Let ${ y}\in Y$
and ${x}	\in \pi^{-1}(y)$. Since $\Omega_d$ is dense, choose $\{{ x}_i\}_{i\in\N}\subseteq \Omega_d$ such that ${x}_i\rightarrow {x}, i\to\infty$. Let ${y}_i=\pi({x}_i)$, then ${y}_i\rightarrow {y}, i\to\infty.$ Since $\{ x_i\}_{i\in\N}\subseteq \Omega_d$, for each $i\in \N$
	\begin{equation*}
\overline{\O}((x_i)^{(d)},\tau_{d})=(\pi^{(d)})^{-1} \left(\overline{\O}({y_i}^{(d)},\tau_{d})\right).
	\end{equation*}
It follows that (recall here $\tau'_{d+1}=\id \times \tau_d$)
\begin{equation*}
\begin{split}	
& \quad \{{ x}_i\}\times(\pi^{(d)})^{-1}\left(\overline{\O}({ y_i}^{(d)},\tau_{d})\right)\\
	&=\{{x}_i\}\times \overline{\O}({x}_i^{(d)},\tau_{d}) \\
	&=\overline{\O}({ x}_i^{(d+1)},{\rm id} \times \tau_{d})=\overline{\O}({ x}_i^{(d+1)},\tau'_{d+1})\\
	&\subseteq \overline{\O}(\Delta_{d+1}(X), \tau'_{d+1})=\overline{\O}(\Delta_{d+1}(X),\tau_{d+1}).
\end{split}		
\end{equation*}
In particular,
	\begin{equation*}
	\{{ x}_i\}\times(\pi^{(d)})^{-1}\left({ y_i}^{(d)}\right)\subseteq \overline{\O}(\Delta_{d+1}(X),\tau_{d+1}).
	\end{equation*}
Note that $\pi^{-1}$ is continuous as $\pi$ is open, and we have
	\begin{equation*}
	\begin{split}
	&\quad \{{ x}\}\times(\pi^{(d)})^{-1}\left({ y}^{(d)}\right)=\lim_{i\to\infty}\{{ x}_i\}\times(\pi^{(d)})^{-1}\left({ y_i}^{(d)}\right)  \\
	&\subseteq \overline{\O}(\Delta_{d+1}(X),\tau_{d+1}).
	\end{split}
	\end{equation*}
That is,
	\begin{equation*}
	(\pi^{(d+1)})^{-1}\left({ y}^{(d+1)}\right)= \pi^{-1}({ y})\times(\pi^{(d)})^{-1}\left({ y}^{(d)}\right)  \subseteq \overline{\O}(\Delta_{d+1}(X),\tau_{d+1}).
	\end{equation*}
	Since ${y}\in Y$ is arbitrary, we have
	\begin{equation*}
	(\pi^{(d+1)})^{-1}(\Delta_{d+1}(Y))
	\subseteq \overline{\O}(\Delta_{d+1}(X),\tau_{d+1}).
	\end{equation*}
	By the continuity of $\pi^{-1}$, we have
	\begin{equation*}
	\begin{split}
	&\quad (\pi^{(d+1)})^{-1}(N_{d+1}(Y))= (\pi^{(d+1)})^{-1}(\overline{\O}(\Delta_{d+1}(Y)
	 ,\tau_{d+1})) \\	&=\overline{\O}((\pi^{(d+1)})^{-1}(\Delta_{d+1}(Y))
	 ,\tau_{d+1})\\
	&\subseteq \overline{\O}(\Delta_{d+1}(X),\tau_{d+1})=N_{d+1}(X).
	\end{split}
	\end{equation*}
The proof is completed.
\end{proof}

\medskip
The following lemma is easy to verify, by the definitions.

\begin{lem}\label{lem-saturated-basic}
Let $X,Y,Z$ be compact metric spaces.
Let $\pi: X\rightarrow Y, \phi: X\rightarrow Z, \psi: Z\rightarrow Y$ be continuous surjective maps such that $\pi= \psi\circ \phi$
$$\xymatrix@R=0.5cm{
  X \ar[dd]_{\pi} \ar[dr]^{\phi}             \\
                & Z \ar[dl]_{\psi}         \\
  Y                 }
$$
\begin{enumerate}
  \item If $A\subset X$ is $\pi$ saturated, then $A$ is $\phi$ saturated.
  \item If $A\subset X$ is $\phi$ saturated and $\phi(A)$ is $\psi$ saturated, then $A$ is $\pi$ saturated.
\end{enumerate}
\end{lem}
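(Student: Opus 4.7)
The statement is purely set-theoretic, so the plan is just to chase definitions. Recall that $A \subseteq X$ is $f$-saturated for a map $f: X \to W$ iff $A = f^{-1}(f(A))$, equivalently: whenever $x \in X$ satisfies $f(x) = f(a)$ for some $a \in A$, then $x \in A$.

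For part (1), assume $A = \pi^{-1}(\pi(A))$. The inclusion $A \subseteq \phi^{-1}(\phi(A))$ is automatic. For the reverse inclusion, I would take $x \in \phi^{-1}(\phi(A))$, pick $a \in A$ with $\phi(x) = \phi(a)$, and apply $\psi$ to both sides to get
\[
\pi(x) = \psi(\phi(x)) = \psi(\phi(a)) = \pi(a),
\]
so $x \in \pi^{-1}(\pi(A)) = A$. This gives $\phi^{-1}(\phi(A)) \subseteq A$, hence equality.

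For part (2), assume $A = \phi^{-1}(\phi(A))$ and $\phi(A) = \psi^{-1}(\psi(\phi(A)))$. Again $A \subseteq \pi^{-1}(\pi(A))$ is automatic. For the reverse, take $x \in \pi^{-1}(\pi(A))$ and pick $a \in A$ with $\pi(x) = \pi(a)$, i.e.\ $\psi(\phi(x)) = \psi(\phi(a))$. Since $\phi(a) \in \phi(A)$, this places $\phi(x)$ in $\psi^{-1}(\psi(\phi(A)))$, which by $\psi$-saturation of $\phi(A)$ equals $\phi(A)$. Hence $\phi(x) \in \phi(A)$, so $x \in \phi^{-1}(\phi(A)) = A$ by $\phi$-saturation of $A$.

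There is no real obstacle here: the only thing to be careful about is to use the hypothesis in part (2) in the right order, namely first transporting the information from $X$ down to $Z$ via the identity $\pi = \psi \circ \phi$, then invoking $\psi$-saturation of $\phi(A)$ to conclude $\phi(x) \in \phi(A)$, and only then applying $\phi$-saturation of $A$ to conclude $x \in A$. No continuity, openness, or topological input is needed; the lemma holds for arbitrary maps of sets.
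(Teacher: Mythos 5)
Your proof is correct, and since the paper omits the argument entirely (it just says the lemma "is easy to verify, by the definitions"), your definitional unwinding is exactly the intended one. Both parts are handled cleanly, and your observation that continuity and compactness are irrelevant is also accurate.
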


\subsection{The connection of $\RP^{[d]}$ with recurrence sets}\
\medskip

We now turn to the second tool
we use in the proof of
Theorem A.
We need the notions of Poincar\'e and
Birkhoff recurrence sets of higher order, see \cite{F77, FLW}. To define them we
appeal to
the MERT and MTRT theorems stated in the introduction.






\begin{de}
Let $d\in \N$.
\begin{enumerate}
\item We say that $S \subset \Z$ is a set of {\em $d$-recurrence } if
for every measure preserving system $(X,\X,\mu,T)$ and for every
$A\in \X$ with $\mu (A)
> 0$, there exists $n \in S$  such that
$\mu(A\cap T^{-n}A\cap \ldots \cap T^{-dn}A)>0.$


\item We say that $S\subset \Z$ is a set of {\em $d$-topological
recurrence} if for every minimal t.d.s. $(X, T)$ and for every
nonempty open subset $U$ of $X$, there exists $n\in S$ such that
$U\cap T^{-n}U\cap \ldots \cap T^{-dn}U\neq \emptyset.$

\item We say that $S\subset \Z$ is a Nil$_d$ Bohr$_0$-set, if there are a $d$-step
nilsystem $(X,T)$, $x\in X$ and a neighbourhood $U$ of $x$ such that $S\supset N(x,U)$.
\end{enumerate}
\end{de}

Let $\F_{Poi_d}$ (resp. $\F_{Bir_d}$, $\F_d$) be
the family consisting of all sets of $d$-recurrence (resp. sets of
$d$-topological recurrence, the sets of Nil$_d$ Bohr$_0$-set). It is obvious by the definitions above
that $\F_{Poi_d}\subset \F_{Bir_d}$.

\medskip

Let $(X,T)$ be a t.d.s., $x\in X$ and $U\subset X$. Set
$$N_T(x,U)=\{n\in \Z: T^nx\in U\}.$$

The following result plays an important role in the proof of Theorem A.
\begin{thm}\cite[Theorem 7.2.7]{HSY16}\label{several}
Let $(X,T)$ be a minimal t.d.s., $d\in\N$ and $x,y\in X$. Then the
following statements are equivalent:

\begin{enumerate}

\item $(x,y)\in \RP^{[d]}(X,T)$.

\item
$N_T(x,U)\in \F_{Poi_d}$ for each neighborhood $U$ of $y$.

\item $N_T(x,U)\in \F_{Bir_d}$ for each
neighborhood $U$ of $y$.

\item $N_T(x,U)\in \F_d^*$, i.e. $N(x,U)\cap A\not=\emptyset$ for each Nil$_d$ Bohr$_0$-set $A$.
\end{enumerate}
\end{thm}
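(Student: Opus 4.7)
The plan is to prove the four conditions equivalent via a cycle built around the pivot equivalence $(1)\Leftrightarrow(4)$, with the other two implications obtained by separating the topological content ($\F_{Bir_d}$) from the measurable one ($\F_{Poi_d}$). The key external input is Theorem~\ref{lift}, which identifies $X_d=X/\RP^{[d]}$ as the maximal $d$-step pro-nilfactor of $(X,T)$ and hence as an inverse limit of minimal $d$-step nilsystems.

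\textbf{Implication $(1)\Leftrightarrow(4)$.} I would prove the contrapositive of $(4)\Rightarrow(1)$ first. If $\pi_d(x)\neq \pi_d(y)$ in $X_d$, the inverse-limit presentation yields a factor map $\phi\colon(X,T)\to(Z,T_Z)$ onto a $d$-step nilsystem with $\phi(x)\neq\phi(y)$. Pick disjoint open neighborhoods $V_x\ni\phi(x)$ and $V_y\ni\phi(y)$, set $U=\phi^{-1}(V_y)$ and $A=N_{T_Z}(\phi(x),V_x)$. Then $A$ is a Nil$_d$ Bohr$_0$-set while $N(x,U)=N_{T_Z}(\phi(x),V_y)$, and these are forced to be disjoint by $V_x\cap V_y=\emptyset$, contradicting (4). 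For $(1)\Rightarrow(4)$, given $\pi_d(x)=\pi_d(y)$ and a Nil$_d$ Bohr$_0$-set $A=N_{T_Z}(z,V)$ with $(Z,T_Z)$ a $d$-step nilsystem, I would form the joining $(X\times Z,T\times T_Z)$, extract a minimal subsystem $M$ of $\overline{\mathcal{O}}((y,z),T\times T_Z)$ (using distality of $Z$), and pass to its maximal $d$-step pro-nilfactor, which embeds into $X_d\times\overline{\mathcal{O}}(z,T_Z)$. Because $\pi_d(x)=\pi_d(y)$, the minimality of this pro-nilfactor lets us extract $n_i$ with $T^{n_i}y\to y$ and $T_Z^{n_i}z\to z$, so eventually $n_i\in N(x,U)\cap A$.

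\textbf{Implications $(2)\Rightarrow(3)\Rightarrow(4)$.} The first is immediate from $\F_{Poi_d}\subseteq \F_{Bir_d}$. For $(3)\Rightarrow(4)$, given a Nil$_d$ Bohr$_0$-set $A=N_{T_Z}(z,V)$, I replace $Z$ by the minimal nilsystem $\overline{\mathcal{O}}(z,T_Z)$ and shrink $V$ to a basic neighborhood of $z$. The point here is that in a minimal $d$-step nilsystem the set of $n$ for which $V\cap T_Z^{-n}V\cap\cdots\cap T_Z^{-dn}V\neq\emptyset$ is syndetic and, by an equicontinuity-along-the-orbit argument on nilmanifolds, is contained (up to a bounded perturbation) in $N_{T_Z}(z,V')$ for a slightly larger $V'$; applying $d$-topological recurrence on $(Z,T_Z)$ with $N(x,U)$ then lands a point of $N(x,U)$ in $A$.

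\textbf{Implication $(4)\Rightarrow(2)$.} Given $N(x,U)\in\F_d^*$ and a measure preserving system $(Y,\mathcal{Y},\nu,S)$ with $\nu(B)>0$, I would first reduce to the ergodic case by ergodic decomposition. Then the Host--Kra--Ziegler theory, which identifies the $d$-step pro-nilsystem $Z_{d}$ as the characteristic factor for the multiple averages \eqref{m-ave}, implies that the set $R(B)=\{n:\nu(B\cap S^{-n}B\cap\cdots\cap S^{-dn}B)>0\}$ contains a shifted Nil$_d$ Bohr$_0$-set coming from $Z_{d}$. Since $N(x,U)\in\F_d^*$ meets every such set, $N(x,U)\cap R(B)\neq\emptyset$, which is precisely the statement $N(x,U)\in\F_{Poi_d}$.

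\textbf{Main obstacle.} The deepest step is the forward direction $(1)\Rightarrow(4)$: translating the purely topological regional proximality condition in $X$ into hitting information against return-time sets in an external nilsystem. The joining argument outlined above is the natural bridge, but its success hinges on distality of the nilsystem factor together with the precise description of $X_d$ as the maximal $d$-step pro-nilfactor; without these, no direct combinatorial attack seems to work. The implication $(4)\Rightarrow(2)$ is the other point where genuinely deep input (Host--Kra structure) is unavoidable.
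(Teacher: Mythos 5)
Your cycle of implications differs from the one used in \cite{HSY16} and sketched in this paper (which runs $(2)\Rightarrow(3)\Rightarrow(4)\Rightarrow(1)\Rightarrow(2)$), and two of your steps have real gaps. In the direct argument for $(1)\Rightarrow(4)$, you join $(y,z)$ and extract a sequence $n_i$ with $T^{n_i}y\to y$ and $T_Z^{n_i}z\to z$, then conclude $n_i\in N_T(x,U)\cap A$. But $N_T(x,U)$ requires $T^{n_i}x\in U$, not $T^{n_i}y\in U$; since $(x,y)\in \RP^{[d]}$ does not make $x$ and $y$ proximal, having $T^{n_i}y$ close to $y$ says nothing about where $T^{n_i}x$ lands. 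To run a joining argument you would have to track the $x$-coordinate, e.g. work inside $\overline{\O}((x,z),T\times T_Z)$ and use $\pi_d(x)=\pi_d(y)$ to push the orbit closure across the fibre of $X_d$ over $\pi_d(y)$; as written the conclusion simply does not follow.

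The step $(3)\Rightarrow(4)$ is also substantially underestimated. What is needed is that every Nil$_d$ Bohr$_0$-set $A=N_{T_Z}(z,V)$ contains a set of the form $\{n:W\cap T_Z^{-n}W\cap\cdots\cap T_Z^{-dn}W\neq\emptyset\}$ for some minimal $d$-step nilsystem and open $W$; this is exactly the inclusion $\F_{Bir_d}\subset\F_d^*$, and the paper explicitly flags this as the hard technical core of \cite{HSY16}, proved via an intricate analysis of generalized polynomials of degree $d$. Your ``equicontinuity-along-the-orbit'' heuristic is nowhere near this, and the ``up to a bounded perturbation'' fudge does not yield the required genuine containment. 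Finally, in $(4)\Rightarrow(2)$, the Bergelson--Host--Kra theorem gives that the return set $R(B)$ agrees with a Nil$_d$ Bohr$_0$-set only up to a set of zero density (not exactly, and shifted sets do not lie in $\F_d$); membership in $\F_d^*$ alone does not force intersection with $R(B)$. To close this gap one needs the further information, obtained in \cite{HSY16} from the Ellis-semigroup proof of $(4)\Rightarrow(1)$, that under $(1)$ the sets $N_T(x,U)\cap A$ are in fact \emph{syndetic}, hence of positive upper density; that extra quantitative leverage is what allows the density-error in the BHK decomposition to be absorbed. Your proposal omits this crucial refinement.
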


Since this theorem is one of the main ingredients in the proof of Theorem A, let us
say
some
words as to
why it is true.

\medskip

To do this, we first note that by mainly using the result in \cite{BHK-10} we have the following fact: for any minimal
t.d.s. $(X,T)$, $d\in\N$ and any nonempty open set $U$ of $X$,
$$M_d=\{n\in \Z:U\cap T^{-n}U\cap \ldots \cap T^{-dn}U\not=\emptyset\}$$
is almost a Nil$_d$ Bohr$_0$-set, in the sense that the difference of $M_d$ with a Nil$_d$ Bohr$_0$-set has zero density.

Another fact we need is: if $d\in\N$ and $A\subset \Z$ is a Nil$_d$ Bohr$_0$-set, then there exists a minimal $d$-step nilsystem $(X,T)$ and a non-empty
open subset $V$ of $X$ with
$$A\supset \{n\in \Z:V\cap T^{-n}V\cap \ldots\cap T^{-dn}V\not=\emptyset\}.$$
The proof of the latter fact is very involved: it is proved through generalized polynomials of degree $d$ and a very complicated computation.
As a corollary of the latter fact
we get immediately that
$$ \F_{Poi_d}\subset \F_{Bir_d}\subset \F_d^*.$$ So, it remains to
to show
that (1) implies (2) and
that
(4) implies (1) in Theorem \ref{several}.

\medskip

In the proof of the implication
 (4) to (1) we need to use the Ellis semigroup theory and some non-trivial discussions.
 One useful consequence of
the proof is that $N_T(x,U)\cap A$ (stated in (4) of Theorem \ref{several}) is a syndetic set, which implies that it has positive upper density.

To prove (1) implies (2) we mainly use the first fact and the consequence we obtain from the proof (4) implies (1).
Once we have
this, then the proof follows by all
the
elements mentioned above.

\section{Proof of Theorem A}

With the preparation in Section 3, we are in
a
position to show Theorem A.
First we
prove
a key lemma.


\subsection{Proof of Theorem A assuming a  key lemma}\
\medskip

We will give a very useful lemma in this subsection, which allows us to reduce problems related to general minimal systems
to the nilfactors.
Recall that $X_d=X/\RP^{[d]}$ for $d\in \N$ and $\pi_d: (X,T)\rightarrow (X_d,T)$ is the corresponding factor map.
For $j<i$, let $$\pi_{i,j}: (X_i,T)\rightarrow (X_j,T).$$ Then we have
$$\xymatrix{
                & X\ar[dr]^{\pi_j} \ar[dl]_{\pi_i}             \\
 X_i \ar[rr]^{\pi_{i,j}} & &     X_j        }
$$


The following lemma plays a key role in the proof of Theorem A, whose proof will be given in later subsections
since it is very long. We remark that perhaps the number $d'$ picked in the lemma is not sharp, but it is convenient
for us to get all the information we need.

\begin{lem}\label{lem-Key}
Let $\pi:(X,T)\rightarrow (Y,T)$ be an extension of minimal systems, $d\in \N$. Let $\pi$ be open and $X_{d'}$ be a factor of $Y$ with $d'\ge 2d!(d-1)!$.
$$
\xymatrix{
                &         X \ar[d]^{\pi} \ar[dl]_{\pi_{d'}}    \\
  X_{d'} & Y   \ar[l]_{\phi}          }
$$
Then for every non-empty relatively open subset $O$ of ${N_d(X)}$, one has $$\overline{\O}(O,\tau_d)
 =(\pi^{(d)})^{-1}\big(\pi^{(d)} (\overline{\O}(O,\tau_d))\big).$$
\end{lem}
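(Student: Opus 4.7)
My plan is to fix a non-empty relatively open set $O\subseteq N_d(X)$, a point $\mathbf{x}=(x_1,\dots,x_d)\in\overline{\O}(O,\tau_d)$, and a fibre mate $\mathbf{y}=(y_1,\dots,y_d)$ with $\pi(x_i)=\pi(y_i)$ for each $i$, and to deduce that $\mathbf{y}\in\overline{\O}(O,\tau_d)$. Since $X_{d'}$ is a factor of $Y$, we have $R_\pi\subseteq R_{\pi_{d'}}$, and therefore $(x_i,y_i)\in R_{\pi_{d'}}=\RP^{[d']}(X,T)$ for every $i$. Expressing $\mathbf{x}$ as a limit of $\tau_d$-iterates of points of $O$, the problem reduces to producing, for each $\epsilon>0$, a single integer $n\in\Z$ with
\[
\rho(T^{in}x_i,y_i)<\epsilon\quad\text{for all }1\le i\le d,
\]
because then an additional $\tau_d^n$ applied to a good approximation of $\mathbf{x}$ from $O$ will yield an approximation of $\mathbf{y}$ lying in $\overline{\O}(O,\tau_d)$.

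The central analytic input is the recurrence-set characterisation of $\RP^{[d']}$ furnished by Theorem \ref{several}: for each $i$ and each neighbourhood $V_i$ of $y_i$, the return set $N_T(x_i,V_i)$ is a Birkhoff $d'$-recurrence set and meets every Nil$_{d'}$-Bohr$_0$ set. The desired $n$ must lie in the weighted intersection $\bigcap_{i=1}^{d}\frac{1}{i}N_T(x_i,V_i)$, a genuine multiple-recurrence condition with the distinct multipliers $1,2,\ldots,d$. To manage it I would deploy the generator trick emphasised in the introduction: $\G_d=\langle\sigma_d,\tau_d\rangle$ is generated, together with $\sigma_d$, by any one of the auxiliary elements $\alpha_k:=\sigma_d^{-k}\tau_d=(T^{1-k},T^{2-k},\ldots,T^{d-k})$, and $\alpha_k$ is the identity on the $k$-th coordinate. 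Combined with the $\G_d$-minimality of $N_d(X)$ (Theorem \ref{thm-Glasner}), the family $\{\alpha_k\}_{k=1}^{d}$ permits a coordinate-by-coordinate substitution $x_k\leadsto y_k$ in which the $k$-th coordinate is moved while the remaining coordinates are held approximately fixed via multiple recurrence.

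Implementing this substitution inductively on the coordinate index, at step $k$ one must find a common return time witnessing $T^{kn}x_k\approx y_k$ while simultaneously preserving the already-substituted and still-to-be-substituted coordinates up to a controlled error, again invoking Theorem \ref{several} with the pair $(x_k,y_k)$. Each such step activates a weighted multiple-recurrence condition whose order grows with the number of remaining coordinates, and carefully tracking how these nested orders compound across the $d$ substitutions is what forces the quantitative hypothesis $d'\ge 2d!(d-1)!$. The main obstacle I anticipate is precisely this simultaneous control: individually the return sets $N_T(x_i,V_i)$ are rich, but after applying the multipliers $n\mapsto in$ and intersecting them subject to the constraints imposed by $\alpha_k$-navigation one genuinely needs the full strength of Theorem \ref{several}(4), namely that $N_T(x_i,V_i)\in\F_{d'}^*$, in order to match against the Nil$_{d'}$ Bohr$_0$ sets arising from the pro-nil structure of $X_{d'}$. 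This is the step at which the Saturation framework of Section 3 and the recurrence characterisation of $\RP^{[d']}$ have to be combined, and where I expect the real technical work of the proof to lie.
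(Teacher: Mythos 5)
Your high-level map is correct: the paper does substitute one coordinate at a time using the family of auxiliary generators $S_j=\tau_d^{-1}(T^{(d)})^j$ (your $\alpha_j^{-1}$), which fix the $j$-th coordinate, and the recurrence characterisation of $\RP^{[d']}$ in Theorem \ref{several} is indeed the combinatorial engine. However, the proposal stops short of the actual mechanism, and two of the anticipated difficulties are misidentified.

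First, the proof does not attempt to find a single $n$ with $\tau_d^n\mathbf{x}$ close to $\mathbf{y}$, nor an inductive chain of substitutions with ``nested'' recurrence orders compounding across the $d$ steps. Each coordinate substitution is handled by \emph{one} application of Theorem \ref{several} at the fixed level $d'=2d!(d-1)!$, and the passage from the one-coordinate Claim to full saturation is a soft limit argument using openness of $\pi$ (write a point of $\overline{\O}(O,\tau_d)$ as a limit of $\tau_d^{n_i}\mathbf{x}^i$ with $\mathbf{x}^i\in O$, apply the Claim to each $\mathbf{x}^i$, push forward by $\tau_d^{n_i}$ and take limits). There is no accumulation of recurrence orders; $2d!(d-1)!$ is a per-coordinate bound, chosen so that a single string of $S_j$-returns of length $d'$ contains all the needed weighted indices.

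Second, the actual construction that makes one coordinate movable while the others are held still is missing. The paper does not try to ``preserve'' coordinates via multiple recurrence of $T$ directly; rather it invokes Theorem \ref{thm-Glasner-1} for $\langle S_j,\sigma_d\rangle$ to choose an $S_j$-\emph{minimal} point $\mathbf{z}\in N_d(X)$ near $\mathbf{x}$, works inside the $S_j$-orbit closure $A$ of $\mathbf{z}$ (where the $j$-th coordinate is literally constant $=z_j$), and applies Theorem \ref{several} to the pair $(z_j,y_j)\in\RP^{[d']}(X,T^{j(d-1)!})$ with $U=A\cap\prod B_\ep(z_i)$. The crucial additional trick is the re-centring: given $\mathbf{w}'\in U\cap S_j^{-n}U\cap\dots\cap S_j^{-d'n}U$, one sets $\mathbf{w}=S_j^{(d'/2)n}\mathbf{w}'$, so that $S_j^{in}\mathbf{w}\in U$ for all $|i|\le d'/2$; this two-sided control is what lets one read off $\tau_d^{(d-1)!n}\mathbf{w}$ close to $(z_1,\dots,z_{j-1},y_j,z_{j+1},\dots,z_d)$, because the index $l(d-1)!$ equals $(j-l)\cdot i$ for an integer $i$ with $|i|\le d!(d-1)!$. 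Without the $S_j$-minimal orbit closure and the re-centring, the weighted intersection of return sets you wrote down has no reason to be non-empty, and this is precisely the obstacle your plan flags but does not resolve. The reference to the Saturation Theorem of Section 3 is also a red herring here: that theorem is applied in the proof of Theorem A \emph{assuming} Lemma \ref{lem-Key}, not inside the proof of the lemma itself.
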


Now we prove Theorem A assuming Lemma \ref{lem-Key}. In fact, Theorem A follows from the following theorem and the $O$-diagram construction.

\begin{thm}\label{key}
Let $\pi:(X,T)\rightarrow (Y,T)$ be an extension of minimal systems.
If $\pi$ is open and $X_{\infty}$ is a factor of $Y$, then $Y$ is a $d$-step topological
characteristic factor of $X$ for all $d\in \N$.
$$
\xymatrix{
                &         X \ar[d]^{\pi} \ar[dl]_{\pi_\infty}    \\
  X_{\infty} & Y   \ar[l]_{\phi}          }
$$
That is, for all $d\in \N$ there exists a dense $G_\d$ subset $\Omega_d$ of $X$ such that for each $x\in \Omega_d$ the orbit closure $L_x=\overline{\O}(x^{(d)}, \tau_{d})$ is $\pi^{(d)}$-saturated.

\end{thm}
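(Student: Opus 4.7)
The plan is to derive Theorem \ref{key} as a direct application of the Saturation Theorem (Theorem \ref{Thm-Glasner's-Lemma}) to the product map $\sigma = \pi^{(d)} : X^d \to Y^d$, with $\Gamma = \langle \tau_d \rangle$ acting as a cyclic $\Z$-action. Essentially all the content of the proof consists in verifying the four hypotheses of the Saturation Theorem, with Lemma \ref{lem-Key} doing the heavy lifting.

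First I would set up the data: take the index set of Theorem \ref{Thm-Glasner's-Lemma} to be a singleton, so that $\sigma = \pi^{(d)}$, and put $N_Z = N_d(Y)$. To identify $N_X$ with $N_d(X)$ (i.e., to check $\sigma^{-1}(N_Z) = N_d(X)$), I would apply Lemma \ref{lem-Key} to the non-empty relatively open set $O = N_d(X)$ itself: since $N_d(X)$ is $\tau_d$-invariant and $\pi^{(d)}(N_d(X)) = N_d(Y)$, the lemma gives $N_d(X) = (\pi^{(d)})^{-1}(N_d(Y))$. Then I would take $Q = \Delta_d(X) \subseteq N_X$ as the candidate set of ``good'' points.

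Next I would verify conditions (1)--(4) of the Saturation Theorem in turn. For (1), $\pi^{(d)}$ is open because $\pi$ is open and finite products of open maps are open. For (2), the hypothesis that $X_\infty$ is a factor of $Y$ ensures that $X_{d'}$ is a factor of $Y$ for every $d' \in \N$ (via the canonical maps $X_\infty \to X_{d'}$), so in particular for $d' \ge 2d!(d-1)!$, whence Lemma \ref{lem-Key} applies and yields exactly the assertion of condition (2). For (3), one has $\overline{\O}(\Delta_d(X), \tau_d) = N_d(X) = N_X$ by the very definition of $N_d(X)$. For (4), any non-empty relatively open $U \subseteq \Delta_d(X)$ has the form $\Delta_d(V)$ for some non-empty open $V \subseteq X$, and Lemma \ref{lem-intt} gives $\intt_{N_d(X)}(\overline{\O}(U, \tau_d)) \neq \emptyset$.

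With all four conditions verified, the Saturation Theorem yields a dense $G_\delta$ subset $\Omega^* \subseteq \Delta_d(X)$ such that for every $\mathbf{x} \in \Omega^*$, the orbit closure $\overline{\O}(\mathbf{x}, \tau_d)$ is $\pi^{(d)}$-saturated. Transporting $\Omega^*$ back to $X$ through the homeomorphism $x \mapsto x^{(d)}$ produces the required dense $G_\delta$ subset $\Omega_d \subseteq X$ with the property that $L_x = \overline{\O}(x^{(d)}, \tau_d)$ is $\pi^{(d)}$-saturated for every $x \in \Omega_d$, which is exactly the conclusion of Theorem \ref{key}. The genuine obstacle here is not in this deduction at all but in the proof of Lemma \ref{lem-Key}, where (according to the discussion in the introduction) one must exploit the full set of generators of $\langle \sigma_d, \tau_d \rangle$ together with the recurrence-set characterization of $\RP^{[d]}$ from \cite{HSY16}; granting that lemma, Theorem \ref{key} is a routine, if delicate, bookkeeping of the hypotheses of the Saturation Theorem.
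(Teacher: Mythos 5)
Your proof is correct, and it makes a small but genuine simplification of the paper's argument. The paper proves Theorem~\ref{key} by induction on $d$: the base case $d=2$ is immediate (since $N_2(X)=X^2$ and $N_2(Y)=Y^2$), and for the inductive step the only nontrivial hypothesis of the Saturation Theorem is condition (3), namely $N_{d+1}(X)=(\pi^{(d+1)})^{-1}(N_{d+1}(Y))$; the paper obtains this by invoking the inductive hypothesis that $Y$ is already a $d$-step topological characteristic factor of $X$, together with Lemma~\ref{lem-saturated}. You sidestep the induction entirely by observing that Lemma~\ref{lem-Key}, applied to the (relatively open) set $O=N_d(X)$, already gives
\[
N_d(X)=\overline{\O}(N_d(X),\tau_d)
=(\pi^{(d)})^{-1}\bigl(\pi^{(d)}(N_d(X))\bigr)
=(\pi^{(d)})^{-1}\bigl(N_d(Y)\bigr),
\]
so condition (3) follows at once from the same lemma that furnishes condition (2), with no appeal to Lemma~\ref{lem-saturated} or to the result for smaller~$d$. (The step $\pi^{(d)}(N_d(X))=N_d(Y)$ holds because $\pi^{(d)}$ is a factor map of $\tau_d$-systems carrying $\Delta_d(X)$ onto $\Delta_d(Y)$.) The remaining conditions are verified exactly as in the paper: $(1)$ because a finite product of open maps is open, $(4)$ from Lemma~\ref{lem-intt}, and the hypothesis that $X_\infty$ is a factor of $Y$ makes $X_{d'}$ a factor of $Y$ for every $d'$, so Lemma~\ref{lem-Key} applies. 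Both proofs rest entirely on Lemma~\ref{lem-Key} and the Saturation Theorem, so the gain is organizational rather than conceptual, but the non-inductive version is cleaner and renders Lemma~\ref{lem-saturated} unnecessary for this particular deduction. One minor imprecision in your write-up: to fit the literal statement of Theorem~\ref{Thm-Glasner's-Lemma} (which acts diagonally on a product), the natural choice is $I=\{1,\dots,d\}$ with $X_\zeta=X$ carrying the $T^\zeta$-action, $\sigma_\zeta=\pi$, and $\Gamma=\Z$ acting via $\tau_d$, rather than a singleton index set; but since the theorem's proof only uses openness of the product map $\sigma$, this does not affect the validity of the argument.
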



\begin{proof}[Proof of Theorem \ref{key} assuming Lemma \ref{lem-Key}]
We prove the theorem by induction on $d$. Case $d=1$ is trivial.

\medskip
\noindent \textbf{Case $d=2$.} \quad In this case $N_2(X)=X\times X$, $N_2(Y)= Y\times Y$. Let $\pi: (X,T)\rightarrow (Y,T)$. Then also we have $\pi: (X,T^2)\rightarrow (Y,T^2)$. Let
$$\pi^{(2)}: (X\times X,T\times T^2) \rightarrow (Y\times Y,T\times T^2).$$

We need to verify the following:
\begin{enumerate}
 \item[$(1)_2$] $\pi^{(2)}$ is open;
 \item[$(2)_2$] For every non-empty relatively open subset $O$ of ${N_2(X)}$, one has that $$\overline{\O}(O,T\times T^2)
 =(\pi^{(2)})^{-1}\big(\pi^{(2)} (\overline{\O}(O,T\times T^2))\big);$$

   \item[$(3)_2$] $\overline{\O}(\Delta(X), T\times T^2)=N_2(X)=(\pi^{(2)})^{-1}(N_2(Y))$;
   \item[$(4)_2$] for every  non-empty open subset $U$ of $X$, ${\rm int}_{N_2(X)}(\overline{\O}(U^{(2)}, T\times T^2))\neq\emptyset.$
\end{enumerate}

$(1)_2$ is from our assumption, and $(3)_2$ is clear. By Lemma \ref{lem-intt}, we have $(4)_2$.  $(2)_2$ follows from Lemma \ref{lem-Key}.

\medskip

Then by Saturation Theorem (Theorem \ref{Thm-Glasner's-Lemma}) there exists a dense $G_\delta$ subset $\Omega_2$ of
$X$ such that ${ x}\in \Omega_2$ implies $\overline{\O}({x^{(2)}}, T\times T^2)$ is $\pi^{(2)}$-saturated.
That is, $Y$ is a $2$-step topological characteristic factor of $X$.

\medskip
\noindent \textbf{Case $d+1$.}\ We assume that the result holds for $d\ge 2$, and we show $d+1$.
We will verify the conditions of the  Saturation Theorem. That is, we will verify the following conditions:
\begin{enumerate}
 \item[$(1)_{d+1}$] $\pi^{(d+1)}$ is open;
 \medskip
 \item[$(2)_{d+1}$] For every non-empty relatively  open set $O$ of ${N_{d+1}(X)}$, one has that $$\overline{\O}(O,\tau_{d+1})=(\pi^{(d+1)})^{-1}\Big(\pi^{(d+1)} (\overline{\O}(O,\tau_{d+1}))\Big);$$

   \item[$(3)_{d+1}$] $\overline{\O}(\Delta_{d+1}(X), \tau_{d+1})=N_{d+1}(X)=(\pi^{(d+1)})^{-1}(N_{d+1}(Y))$;
       \medskip
   \item[$(4)_{d+1}$] for every non-empty open subset $U$ of $X$, ${\rm int}_{N_{d+1}(X)}(\overline{\O}(U^{(d+1)}, \tau_{d+1})\neq\emptyset.$
\end{enumerate}
\medskip

Condition $(1)_{d+1}$ follows from our assumption that $\pi$ is open. $(2)_{d+1}$ follows from Lemma \ref{lem-Key}. By inductive hypothesis on $d$, $Y$ is a $d$-step topological characteristic factor of $X$, then by Lemma \ref{lem-saturated}, $N_{d+1}(X)$ is $\pi^{(d+1)}$-saturated, i.e.
$$(\pi^{(d+1)})^{-1}(N_{d+1}(Y))=N_{d+1}(X).$$
Hence we have $(3)_{d+1}$. By Lemma \ref{lem-intt}, we have $(4)_{d+1}$.

So by Theorem \ref{Thm-Glasner's-Lemma} there exists a dense $G_\delta$ subset $\Omega_{d+1}$ of
$X$ such that ${x}\in \Omega_{d+1}$ implies $\overline{\O}({x^{(d+1)}}, \tau_{d+1})$ is $\pi^{(d+1)}$-saturated.
That is, $Y$ is
a
$d+1$-step topological characteristic factor of $X$. The proof is completed.
\end{proof}

We are ready to show Theorem A assuming Lemma \ref{lem-Key}.

\begin{proof}[Proof of Theorem A] It follows Theorem \ref{key} and the $O$-diagram.
\end{proof}


As a corollary of Theorem A, we have

\begin{thm}\label{main-distal}
Let $(X,T)$ be a minimal system which is an open extension of its maximal distal factor,
and $d\in \N$. Then $X_d$ is
a
$(d+1)$-step topological characteristic factor of $X$.
\end{thm}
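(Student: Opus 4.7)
The plan is to reduce the problem to Theorem \ref{key} (the open-extension case of Theorem A) and then descend from $X_\infty$ down to $X_d$ using Glasner's characteristic-factor theorem for minimal distal systems (Theorem \ref{thm-Glasner-distalcase}). The first step is to verify that $\pi_\infty: X\to X_\infty$ is open. Writing $D$ for the maximal distal factor of $X$, the map $\pi_D: X\to D$ is open by hypothesis. Because $X_\infty$ is an $\infty$-step pro-nilsystem, it is minimal distal, hence a factor of $D$; so $\pi_\infty = \phi\circ\pi_D$ with $\phi: D\to X_\infty$ a factor map between two minimal distal systems. Factor maps between minimal distal systems are automatically open (a classical consequence of Furstenberg's structure theorem, since such a map is an inverse limit of equicontinuous extensions, each of which is open). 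Therefore $\pi_\infty$ is a composition of open maps, hence open.

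With $\pi_\infty$ open, Theorem \ref{key} applied with $Y=X_\infty$ produces a dense $G_\delta$ set $\Omega_1\subseteq X$ such that for every $x\in\Omega_1$ the orbit closure $L_x:=\overline{\O}(x^{(d+1)},\tau_{d+1})$ is $\pi_\infty^{(d+1)}$-saturated. I would then apply Theorem \ref{thm-Glasner-distalcase} to the minimal distal system $X_\infty$: its largest distal factor of order $d$, namely $F_d(X_\infty)$, is a $(d+1)$-step topological characteristic factor of $X_\infty$. Since $X_\infty$ is pro-nil, one has $F_d(X_\infty)=(X_\infty)_d=X_d$, using Lemma \ref{qiu} on each finite-step pro-nil approximating $X_\infty$ and passing to the inverse limit, together with the elementary observation that every $d$-step pro-nilfactor of $X$ factors through $X_\infty$. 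Consequently there is a dense $G_\delta$ set $\Omega_2\subseteq X_\infty$ such that for every $y\in\Omega_2$, $\overline{\O}(y^{(d+1)},\tau_{d+1})\subseteq X_\infty^{d+1}$ is $\pi_{\infty,d}^{(d+1)}$-saturated, where $\pi_{\infty,d}: X_\infty\to X_d$.

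Finally, set $\Omega=\Omega_1\cap\pi_\infty^{-1}(\Omega_2)$. Since $\pi_\infty$ is continuous, open and surjective, $\pi_\infty^{-1}(\Omega_2)$ is a dense $G_\delta$ in $X$, hence so is $\Omega$. For $x\in\Omega$, $L_x$ is $\pi_\infty^{(d+1)}$-saturated and its image $\pi_\infty^{(d+1)}(L_x)=\overline{\O}(\pi_\infty(x)^{(d+1)},\tau_{d+1})$ is $\pi_{\infty,d}^{(d+1)}$-saturated. Since $\pi_d^{(d+1)}=\pi_{\infty,d}^{(d+1)}\circ\pi_\infty^{(d+1)}$, part (2) of Lemma \ref{lem-saturated-basic} then yields that $L_x$ is $\pi_d^{(d+1)}$-saturated, which is exactly the statement that $X_d$ is a $(d+1)$-step topological characteristic factor of $X$. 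The main subtleties to nail down are the openness of the factor map $\phi: D\to X_\infty$ between two minimal distal systems, and the identification $F_d(X_\infty)=X_d$ in the $\infty$-step pro-nil setting; once these are in hand, the argument is simply a clean assembly of Theorem \ref{key}, Theorem \ref{thm-Glasner-distalcase}, and the transitivity of saturation encoded in Lemma \ref{lem-saturated-basic}.
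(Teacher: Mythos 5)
Your argument is correct and follows the same overall skeleton as the paper's (reduce via the open-extension form of Theorem~A, descend to $X_d$ using the Glasner/Qiu--Zhao identification of the Furstenberg tower with the $\RP^{[k]}$-tower for pro-nilsystems, then compose saturations via Lemma~\ref{lem-saturated-basic}(2)), but the intermediate factor you pass through is different. You invoke Theorem~\ref{key} verbatim with $Y=X_\infty$, whereas the paper notes that the \emph{proof} of Theorem~\ref{key} only requires $X_{d'}$ for $d'\ge 2(d+1)!\,d!$ to be a factor of $Y$, and therefore works with a \emph{finite}-step factor $X_{\widetilde d}$ (for suitably large $\widetilde d$) as the intermediate. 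The paper's choice is not cosmetic: with $X_{\widetilde d}$ a $\widetilde d$-step pro-nilsystem, Lemma~\ref{qiu} (as stated) applies directly to give $F_d(X_{\widetilde d})=(X_{\widetilde d})_d=X_d$, and Theorem~\ref{thm-nil-case} is quotable off the shelf. By routing through $X_\infty$ you instead need the identification $F_d(X_\infty)=X_d$ for the $\infty$-step pro-nilsystem $X_\infty$; this is true, but Lemma~\ref{qiu} is only stated for finite-step pro-nilsystems, and the inverse-limit bootstrapping you gesture at (``apply Lemma~\ref{qiu} to each finite-step approximant and pass to the limit'') is a nontrivial extra step that should be spelled out, since in general one cannot upgrade $\psi_d$-saturation to $\pi_{\infty,d}$-saturation unless the two quotient maps actually coincide. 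On the plus side, you make explicit the openness of the factor map $D\to X_\infty$ (a classical fact about factor maps between minimal distal flows), which the paper invokes only implicitly when it asserts $\pi_\infty$ is open ``by our assumption.''
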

\begin{proof}
Let $(X,T)$ is a minimal system which is an open extension of its maximal distal factor and $d\in \N$. We want to show that $X_d$ is
a
$d+1$-step topological characteristic factor of $X$. Since $X_\infty$ is a factor of the maximal distal factor, by our assumption $\pi_\infty: X\rightarrow X_\infty$ is open. By the proof of Theorem \ref{key}, there is some $\widetilde{d}$ such that $X_{\widetilde{d}}$ is a $d+1$-step topological characteristic factor of $X$.
$$\xymatrix@R=0.5cm{
  X \ar[dd]_{\pi_1} \ar[dr]^{\pi_{\widetilde{d}}}             \\
                & X_{\widetilde{d}} \ar[dl]^{\pi_{\widetilde{d},d}}         \\
  X_d                 }
$$
By Theorem \ref{thm-nil-case}, $X_d$ is
a
$d+1$-step topological characteristic factor of $X_{\widetilde{d}}$.
By Lemma \ref{lem-saturated-basic}, $X_d$ is
a
$d+1$-step topological characteristic factor of $X$.
\end{proof}

We now proceed to the proof of Lemma \ref{lem-Key}.

\subsection{Cases $d=1,\ d=2$ for Lemma \ref{lem-Key}}\
\medskip

In this subsection, to make the idea clear, we show the cases $d=1$ and $d=2$ of Lemma~ \ref{lem-Key}, and we show the general case after that.

Let $\rho$ be the metric of $X$ and $\rho_d$ the metric of $X^d$ defined by
$$\rho_d({\bf x},{\bf y})=\max_{1\le j\le d} \rho(x_j,y_j),$$
where ${\bf x}=(x_1,x_2,\ldots, x_d), {\bf y}=(y_1,y_2,\ldots,y_d) \in X^d$.

\medskip
\noindent \textbf{Case $d=1$.} \quad In this case , we take $d'=1$:
$$
\xymatrix{
                &         X \ar[d]^{\pi} \ar[dl]_{\pi_{1}}    \\
  X_{1} & Y   \ar[l]_{\phi}          }
$$
It is easy to see that
$N_2(X)=X\times X$, $N_2(Y)= Y\times Y$. Let $\pi: (X,T)\rightarrow (Y,T)$. Note that $R_\pi\subset \RP^{[1]}(X)$.  Let
$$\pi^{(2)}: (X\times X,T\times T^2) \rightarrow (Y\times Y,T\times T^2).$$

For any non-empty open set $O\subset N_2(X)$, we need to show that
$$N=:\overline{\O}(O,T\times T^2) =(\pi^{(2)})^{-1}\big(\pi^{(2)} (\overline{\O}(O,T\times T^2))\big).$$

To show this,
we first show the following claim:

\bigskip
\noindent{\bf Claim:} {\em Let $(x_1,x_2)\in O$. Then
$\pi^{-1}(\pi(x_1))\times \{x_2\}\subset N$, and $\{x_1\}\times \pi^{-1}(\pi(x_2))\subset N$}

\medskip

Let $(x_1,x_2)\in O$. Let $\ep>0$ such that $B_\ep(x_1)\times B_\ep(x_2)\subset O$. Let $z_1\in X$ such that $(x_1,z_1)\in R_\pi\subset \RP^{[1]}(X)$.

Since $(x_1,z_1)\in R_\pi\subset \RP^{[1]}(X)$, we know by Theorem \ref{several} that $N(x_1,B_\ep(z_1))\in  \F_{Bir_1}$. By the definition of $\F_{Bir_1}$, there is $n\in\Z$ such that
$$T^n(x_1)\in B_\ep(z_1)\ \text{and}\ A=B_\ep(x_2)\cap T^{-2n} B_\ep(x_2)\not=\emptyset.$$
Pick $x_2'\in A$, and we have $T^{n}x_1 \in B_\ep(z_1)$ and $T^{2n}x_2'\in B_\ep(x_2).$ It is clear that
$$(x_1,x_2')\in B_\ep(x_1)\times B_\ep(x_2)\subset O.$$
Thus $$\rho_2((z_1,x_2), \overline{\O}(O, T\times T^2))\le \rho_2((z_1,x_2), \overline{\O}((x_1,x_2'),T\times T^2))<\ep$$

Since $\ep$ is arbitrary, we know that $(z_1,x_2)\in N=\overline{\O}(O, T\times T^2))$. This implies that
 $$\pi^{-1}\pi(x_1)\times \{x_2\}\subset N.$$

Now let $z_2\in X$ with $(x_2,z_2)\in R_\pi\subset \RP^{[1]}(X)$.
Since $(x_2,z_2)\in R_\pi\subset \RP^{[1]}(X,T)=\RP^{[1]}(X,T^2)$, by Theorem \ref{several} $$N_{T^2}(x_2,B_\ep(z_2))\in \F_{Bir_1}.$$
Thus by the definition of $ \F_{Bir_1}$, there is some $n$ such that
$$T^{2n}x_2 \in B_{\ep}(z_2),$$
and
$$B_\ep(x_1)\cap T^{-n}B_{\ep}(x_1)\neq \emptyset.$$
Let $x_1' \in B_\ep(x_1)\cap T^{-n}B_{\ep}(x_1)$. Then
$$T^n x_1'\in B_\ep (x_1), \quad T^{2n}x_2 \in B_\ep (z_2).$$
Thus $\rho_2((T\times T^2)^n(x_1',x_2), (x_1,z_2))<\ep.$
Note that $(x_1',x_2)\subset B_\ep(x_1)\times B_\ep(x_2)\subset O\subset N.$
It follows
$$\rho_2((x_1,z_2), \overline{\O}(O, T\times T^2) )\le \rho_2((x_1,z_2), \overline{\O}((x_1',x_2),T\times T^2))<\ep.$$
Since $\ep$ is arbitrary, $(x_1,x_2')\in N=\overline{\O}(O, T\times T^2).$
Thus
$$\{x_1\}\times \pi^{-1}(\pi(x_2))\subset N$$
The proof of Claim is completed. \hfill $\square$

\medskip

Let $$(z_1,z_2)\in  N=\overline{\O}(O,\tau_2).$$
We will show that
$$(z_1',z_2')\in N,$$
where $\pi(z_i)=\pi(z_i'), i=1,2.$

First we show that $(z_1',z_2)\in N$.
Since $(z_1,z_2)\in  N=\overline{\O}(O,\tau_2)$, there are some sequences $\{(x_1^i,x_2^i)\}_{i\in \N}\subset  O$ and $\{n_i\}_{i\in \N}\subset \Z$ such that
$$\tau_2^{n_i}(x_1^i,x_2^i)\to (z_1,z_2), \ i\to\infty.$$
By Claim and $(x_1^i,x_2^i)\in O$,
$$\pi^{-1}(\pi(x_1^i))\times \{x_2^i\}\subset N.$$
Since $\pi$ is open, it follows that
\begin{equation*}
  \begin{split}
   &\quad  \tau_2^{n_i}\Big (\pi^{-1}(\pi(x_1^i))\times \{x_2^i\}\Big )
     =\pi^{-1}(\pi(T^{n_i }x_1^i))\times \{T^{2n_i}x_2^i\} \\
    & \xrightarrow{i\to\infty} \pi^{-1}(\pi(z_1))\times \{z_2\}
     \subset \overline{\O}(N,\tau_2)= N.
   \end{split}
\end{equation*}
Thus we have
$(z_1',z_2)\in N.$
Similarly, we have
$(z_1',z_2')\in N.$

\medskip

Thus we finished the case $d=1$.

\medskip

\medskip
\noindent \textbf{Case $d=2$.} \
For every non-empty relatively open set $O\subseteq {N_3(X)}$, we want to show  that $$N=\overline{\O}(O,\tau_3)=(\pi^{(3)})^{-1}\Big(\pi^{(3)} (\overline{\O}(O,\tau_3))\Big),$$
where $\tau_3=T\times T^2\times T^3$.

In this case , we take $d'=4$:
$$
\xymatrix{
                &         X \ar[d]^{\pi} \ar[dl]_{\pi_{4}}    \\
  X_{4} & Y   \ar[l]_{\phi}          }
$$

Let

$$\pi^{(3)}: (X^3, \tau_3) \rightarrow (Y^3, \tau_3).$$

$$
\xymatrix{
                &         (X^3,\tau_3) \ar[d]^{\pi^{(3)}} \ar[dl]_{\pi^{(3)}_{4}}    \\
 ( X_{4}^{3},\tau_3) & (Y^{3},\tau_3)   \ar[l]_{\phi^{(3)}}          }
$$

Let $\tau_{3,1}=\id\times T\times T^2$, $\tau_{3,2}=T\times \id\times T^{-1}$ and $\tau_{3,3}=T^2\times T\times \id$.
It is easy to see that
$$\langle\tau_3, T^{(3)}\rangle=\langle\tau_{3,1,}, T^{(3)}\rangle=\langle\tau_{3,2}, T^{(3)}\rangle=\langle\tau_{3,3}, T^{(3)}\rangle.$$


\medskip
\noindent{\bf Step 1}.  {\em Let $(x_1,x_2,x_3)\in O$. If $y\in X$ with  $\pi(x_1)=\pi(y)$, then $(y,x_2, x_3)\in N=\overline{\O}(O, \tau_3)$. }
\medskip

Since $(x_1,x_2,x_3) \in O$ and $O$ is a non-empty relatively open subset of ${N_3(X)}$, there is some $\d>0$ such that
$$B_{\d}((x_1,x_2,x_3))\subset O.$$

By Theorem \ref{thm-Glasner-1}, $(N_3(X), \langle \id\times T\times T^2, T^{(3)}\rangle)$ is minimal and the $\id\times T\times T^2$-minimal points in $N_3(X)$ are dense in $N_3(X)$.

Let $\ep>0$ with $\ep<\frac{\d}{2}$.
Choose an $\id\times T\times T^2$-minimal point $(z_1,z_2, z_3)\in N_{3}(X)$ such that
$$\rho_3((z_1,z_2, z_3),(x_1,x_2,x_3) )<\ep<\d/2.$$
By the openness of $\pi$, we may assume that there is some $y_1 \in X$ such that $$\rho(y_1,y)< {\ep}\ \text{and}\ \pi(z_1)=\pi(y_1).$$

Let
$$A=\overline{\O}((z_1,z_2,z_3), \id\times T\times T^2).$$
Since $(z_1,z_2,z_3)$ is $\id\times T\times T^2$-minimal, $(A, \id\times T\times T^2)$ is a minimal system.
Note that by the definition of
$A$, for all $(w_1,w_2, w_3)\in A$, one has that $w_1=z_1$.

Let
$$U=A\cap \Big(B_{\ep}(z_1)\times B_{\ep}(z_2)\times B_{\ep}(z_3)\Big)$$
be a non-empty open subset of $A$.

By the assumption $(z_1, y_1)\in R_\pi\subset \RP^{[4]}(X,T)=\RP^{[4]}(X,T^2)$, we have $$N_{T^2}(z_1,B_\ep(y_1))\in \F_{Bir_4}.$$
Thus, according to Theorem \ref{several}
there is some $n\in \Z$ such that
$$T^{2n}z_1\in B_\ep(y_1),$$
and
$$B=U\cap (\id\times T\times T^2)^{-n}U\cap (\id\times T\times T^2)^{-2n}U\cap (\id \times T\times T^2)^{-3n}U\cap(\id\times T\times T^2)^{-4n}U\neq \emptyset.$$

Let $(z_1, y_2,y_3)\in B$. Then from $(z_1, y_2,y_3)\in (\id\times T\times T^2)^{-4n}U$, we get
$T^{4n}y_2\in B_\ep(z_2)$, and from $(z_1, y_2,y_3)\in (\id \times T\times T^2)^{-3n}U$ we get $T^{6n}y_3\in B_\ep(z_3)$
(this explains why we need to use $\RP^{[4]}$ instead of $\RP^{[2]}$ for our method).
Hence, we have
\begin{equation}\label{}
 T^{2n} z_1\in B_\ep(y_1),\ T^{4n}y_2\in B_\ep(z_2),\  T^{6n}y_3\in B_\ep(z_3).
\end{equation}
Thus
$$\rho_3\Big(\tau_3^{2n}(z_1,y_2,y_3),(y_1,z_2,z_3)\Big)<\ep,$$
and
$$\rho_3\Big((y_1,z_2,z_{3}),\overline{\O}((z_1,y_2,y_3),\tau_3)\Big)< \ep.$$
Since $\rho_3((z_1,z_2,z_3),(x_1,x_2,x_3))<\ep$ and $\rho(y_1,y)< \ep$,
we have
$$\rho_3\Big((y_1,z_{2},z_3), (y,x_{2}, x_{3}) \Big)<\ep .$$
Thus
\begin{equation}\label{list-4-2}
  \rho_3\Big((y, x_{2}, x_3), \overline{\O}((z_1,y_2,y_3),\tau_3) \Big)<\ep+\ep=2\ep.
\end{equation}
Since $(z_1,y_2,y_3)\in U$, $\rho_3((z_1,y_2,y_3),(z_1,z_2,z_3))<\ep$, it follows that
$$\rho_3((z_1,y_2,y_3), (x_1,x_2,x_3)) < \rho_3((z_1,y_2,y_3), (z_1,z_2,z_3)) + \rho_d((z_1,z_2,z_3), (x_1,x_2,x_3)) <\ep+\ep<\d.$$
Hence $$(z_1,y_2,y_3) \in B_\d((x_1,x_2,x_3))\subset O.$$
So $\overline{\O}((z_1,y_2,y_3),\tau_3)\subset \overline{\O}(O , \tau_3) $, and we have
$$\rho_3\Big((y, x_2, x_3), \overline{\O}(O , \tau_3) \Big)<2\ep,$$
by (\ref{list-4-2}).
As $\ep$ is arbitrary, we have that indeed
$$(y, x_2, x_3) \in \overline{\O}(O,\tau_3).$$

\medskip
\noindent{\bf Step 2}.  {\em Let $(x_1,x_2,x_3)\in O$. If $y\in X$ with  $\pi(x_2)=\pi(y)$, then $(x_1,y, x_3)\in N$. }
\medskip

Since $(x_1,x_2,x_3) \in O$ and $O$ is a non-empty relatively open subset of ${N_3(X)}$, there is some $\d>0$ such that
$$B_{\d}((x_1,x_2,x_3))\subset O.$$

By Theorem \ref{thm-Glasner-1}, $(N_3(X), \langle T\times \id \times T^{-1}, T^{(3)}\rangle)$ is minimal and the $T\times \id \times T^{-1}$-minimal points in $N_3(X)$ are dense in $N_3(X)$.

Let $\ep>0$ with $\ep<\frac{\d}{2}$.
Choose an $T\times \id \times T^{-1}$-minimal point $(z_1,z_2, z_3)\in N_{3}(X)$ such that
$$\rho_3((z_1,z_2, z_3),(x_1,x_2,x_3) )<\ep<\d/2.$$
By the openness of $\pi$, we may assume that there is some $y_2 \in X$ such that $$\rho(y_2,y)< {\ep}\ \text{and}\ \pi(z_2)=\pi(y_2).$$

Let
$$A=\overline{\O}((z_1,z_2,z_3), T\times \id \times T^{-1}).$$
Since $(z_1,z_2,z_3)$ is $T\times \id \times T^{-1}$-minimal, $(A, T\times \id \times T^{-1})$ is a minimal system.
Note that by the definition of $T\times \id \times T^{-1}$, for all $(w_1,w_2, w_3)\in A$, one has that $w_2=z_2$.

Let
$$U=A\cap \Big(B_{\ep}(z_1)\times B_{\ep}(z_2)\times B_{\ep}(z_3)\Big)$$
be a non-empty open subset of $A$.

By Theorem \ref{several} and the assumption $(z_2, y_2)\in R_\pi\subset \RP^{[4]}(X,T)=\RP^{[4]}(X,T^2)$, we have $$N_{T^2}(z_2,B_\ep(y_2))\in \F_{Bir_4}.$$
Thus by the definition of $\F_{Bir_4}$ there is some $n\in \Z$ such that
$$T^{2n}z_2\in B_\ep(y_2),$$
and
$$B=U\cap (T\times \id \times T^{-1})^{-n}U\cap (T\times \id \times T^{-1})^{-2n}U\cap (T\times \id \times T^{-1})^{-3n}U\cap(T\times \id \times T^{-1})^{-4n}U\neq \emptyset.$$
Let $(y_1, z_2, y_3)\in B$. Then we have
\begin{equation}\label{}
 T^{n} y_1\in B_\ep(z_1),\ T^{2n}z_2\in B_\ep(y_2),\  T^{3n}y_3\in B_\ep(z_3).
\end{equation}
Thus
$$\rho_3\Big(\tau_3^{n}(y_1,z_2,y_3),(z_1,y_2,z_3)\Big)<\ep,$$
and
$$\rho_3\Big((z_1,y_2,z_{3}),\overline{\O}((y_1,z_2,y_3),\tau_3)\Big)< \ep.$$
Since $\rho_3((z_1,z_2,z_3),(x_1,x_2,x_3))<\ep$ and $\rho(y_2,y)< \ep$,
we have
$$\rho_3\Big((z_1,y_{2},z_3), (x_1,y , x_{3}) \Big)<\ep .$$
Thus
\begin{equation}\label{list-4-4}
  \rho_3\Big((x_1, y, x_3), \overline{\O}((y_1,z_2,y_3),\tau_3) \Big)<\ep+\ep=2\ep.
\end{equation}
Since $(y_1,z_2,y_3)\in U$, $\rho_3((y_1,z_2,y_3),(z_1,z_2,z_3))<\ep$, it follows that
$$\rho_3((y_1,z_2,y_3), (x_1,x_2,x_3)) < \rho_3((y_1,z_2,y_3), (z_1,z_2,z_3)) + \rho_d((z_1,z_2,z_3), (x_1,x_2,x_3)) <\ep+\ep<\d.$$
Hence $$(y_1,z_2,y_3) \in B_\d((x_1,x_2,x_3))\subset O.$$
So $\overline{\O}((y_1,z_2,y_3),\tau_3)\subset \overline{\O}(O , \tau_3) $, and we have
$$\rho_3\Big((x_1,y, x_3), \overline{\O}(O , \tau_3) \Big)<2\ep,$$
by (\ref{list-4-4}).
As $\ep$ is arbitrary, we have
$$( x_1,y, x_3) \in \overline{\O}(O,\tau_3).$$

\medskip
\noindent{\bf Step 3}.  {\em Let $(x_1,x_2,x_3)\in O$. If $y\in X$ with  $\pi(x_3)=\pi(y)$, then $(x_1,x_2, y)\in N$. }
\medskip

Since $(x_1,x_2,x_3) \in O$ and $O$ is a non-empty relatively open subset of ${N_3(X)}$, there is some $\d>0$ such that
$$B_{\d}((x_1,x_2,x_3))\subset O.$$

By Theorem \ref{thm-Glasner-1}, $(N_3(X), \langle T^2\times  T\times \id , T^{(3)}\rangle)$ is minimal and the $T^2\times  T\times \id$-minimal points in $N_3(X)$ are dense in $N_3(X)$.

Let $\ep>0$ with $\ep<\frac{\d}{2}$.
Choose an $T^2\times  T\times \id$-minimal point $(z_1,z_2, z_3)\in N_{3}(X)$ such that
$$\rho_3((z_1,z_2, z_3),(x_1,x_2,x_3) )<\ep<\d/2.$$
By the openness of $\pi$, we may assume that there is some $y_3 \in X$ such that $$\rho(y_3,y)< {\ep}\ \text{and}\ \pi(z_3)=\pi(y_3).$$

Let
$$A=\overline{\O}((z_1,z_2,z_3), T^2\times  T\times \id).$$
Since $(z_1,z_2,z_3)$ is $T^2\times  T\times \id$-minimal, $(A, T^2\times  T\times \id)$ is a minimal system.
Note that by the definition of $T^2\times  T\times \id$, for all $(w_1,w_2, w_3)\in A$, one has that $w_3=z_3$.

Let
$$U=A\cap \Big(B_{\ep}(z_1)\times B_{\ep}(z_2)\times B_{\ep}(z_3)\Big)$$
be a non-empty open subset of $A$.

By Theorem \ref{several} and the assumption $(z_3, y_3)\in R_\pi\subset \RP^{[4]}(X,T)=\RP^{[4]}(X,T^6)$, we have $$N_{T^6}(z_3,B_\ep(y_3))\in \F_{Bir_4}.$$
Thus be the definition of $\F_{Bir_4}$ there is some $n\in \Z$ such that
$$T^{6n}z_3\in B_\ep(y_3),$$
and
$$B=U\cap (T^2\times  T\times \id)^{-n}U\cap (T^2\times  T\times \id)^{-2n}U\cap (T^2\times  T\times \id)^{-3n}U\cap(T^2\times  T\times \id)^{-4n}U\neq \emptyset.$$
Let $(y_1, y_2, z_3)\in B$. Then we have
\begin{equation}\label{}
 T^{2n} y_1\in B_\ep(z_1),\ T^{4n}y_2\in B_\ep(z_2),\  T^{6n}z_3\in B_\ep(y_3).
\end{equation}
Thus
$$\rho_3\Big(\tau_3^{2n}(y_1,y_2,z_3),(z_1,z_2,y_3)\Big)<\ep,$$
and
$$\rho_3\Big((z_1,z_2,y_{3}),\overline{\O}((y_1,y_2,z_3),\tau_3)\Big)< \ep.$$
Since $\rho_3((z_1,z_2,z_3),(x_1,x_2,x_3))<\ep$ and $\rho(y_3,y)< \ep$,
we have
$$\rho_3\Big((z_1,z_{2},y_3), (x_1, x_{2},y) \Big)<\ep .$$
Thus
\begin{equation}\label{list-4-6}
  \rho_3\Big((x_1, x_2, y), \overline{\O}((y_1,y_2,z_3),\tau_3) \Big)<\ep+\ep=2\ep.
\end{equation}
Since $(y_1,y_2,z_3)\in U$, $\rho_3((y_1,y_2,z_3),(z_1,z_2,z_3))<\ep$, it follows that
$$\rho_3((y_1,y_2,z_3), (x_1,x_2,x_3)) < \rho_3((y_1,y_2,z_3), (z_1,z_2,z_3)) + \rho_d((z_1,z_2,z_3), (x_1,x_2,x_3)) <\ep+\ep<\d.$$
Hence $$(y_1,y_2,z_3) \in B_\d((x_1,x_2,x_3))\subset O.$$
So $\overline{\O}((y_1,y_2,z_3),\tau_3)\subset \overline{\O}(O , \tau_3) $, and we have that
$$\rho_3\Big((x_1,x_2, y), \overline{\O}(O , \tau_3) \Big)<2\ep,$$ by (\ref{list-4-6}).
As $\ep$ is arbitrary, we have
$$( x_1,x_2, y) \in \overline{\O}(O,\tau_3).$$

\medskip
\noindent{\bf Step 4}.  {\em $N=(\pi^{(3)})^{-1}(\pi^{(3)}N)$. }
\medskip

Let $$(z_1,z_2,z_3)\in  N=\overline{\O}(O,\tau_3).$$
We will show that
$$(z_1',z_2',z_3')\in N,$$
where $\pi(z_i)=\pi(z_i'), i=1,2,3.$

First we show that $(z_1',z_2,z_3)\in N$.
Since $(z_1,z_2,z_3)\in  N=\overline{\O}(O,\tau_3)$, there are some sequences $\{(x_1^i,x_2^i,x_3^i)\}_{i\in \N}\subset  O$ and $\{n_i\}_{i\in \N}\subset \Z$ such that
$$\tau_3^{n_i}(x_1^i,x_2^i,x_3^i)\to (z_1,z_2,z_3), \ i\to\infty.$$
By step 1 and $(x_1^i,x_2^i,x_3^i)\in O$,
$$\pi^{-1}(\pi(x_1^i))\times \{x_2^i\}\times \{x^i_3\}\subset N.$$
Since $\pi$ is open, it follows that
\begin{equation*}
  \begin{split}
   &\quad  \tau_3^{n_i}\Big (\pi^{-1}(\pi(x_1^i))\times \{x_2^i\}\times \{x^i_3\}\Big ) \\
    & =\pi^{-1}(\pi(T^{n_i }x_1^i))\times \{T^{2n_i}x_2^i\}\times \{T^{3n_i} x^i_3\} \\
    & \xrightarrow{i\to\infty} \pi^{-1}(\pi(z_1))\times \{z_2\}\times \{z_3\} \\
    & \subset \overline{\O}(N,\tau_3)= N.
   \end{split}
\end{equation*}
Thus we have
$$(z_1',z_2,z_3)\in N.$$
Similarly, using Step 2 we have
$$(z_1',z_2',z_3)\in N.$$
And then using Step 3 we have
$$(z_1',z_2',z_3')\in N.$$

\medskip

Thus we have finished the proof for the case $d=2$.

\subsection{Proof of Lemma \ref{lem-Key} in the general case}
\begin{proof}[Proof of Lemma \ref{lem-Key}]
Let $\rho$ be the metric of $X$ and $\rho_d$ the metric of $X^d$ defined by
$$\rho_d({\bf x},{\bf y})=\max_{1\le j\le d} \rho(x_j,y_j),$$
where ${\bf x}=(x_1,x_2,\ldots, x_d), {\bf y}=(y_1,y_2,\ldots,y_d)\in X^d$.

\medskip

Let $O$ be a non-empty relatively open subset of ${N_d(X)}$.
First we show the following claim.

\medskip

\noindent \textbf{Claim:} \  {\em Let ${\bf x}=(x_1,x_2,\ldots,x_d)\in O$ and $j\in \{1,2,\ldots, d\}$. Then for each $y\in X$ with $\pi(y)=\pi(x_j)$, one has that
$$(x_1,\ldots, x_{j-1}, y,x_{j+1},\ldots, x_d) \in \overline{\O}(O,\tau_d).$$
}

\noindent {\em Proof of the Claim.} \
Since ${\bf x}\in O$ and $O$ is a non-empty relatively open subset of ${N_d(X)}$, there is some $\d>0$ such that
$$B_{\d}({\bf x})\subset O.$$

Let
\begin{equation}\label{}
S_j=\tau_d^{-1} (T^{(d)})^{j}=T^{j-1}\times T^{j-2}\times \ldots \times T\times \id \times T^{-1}\times \ldots \times T^{-(d-j)}, 1\le j\le d.
\end{equation}
Note that
$$\langle\tau_d, T^{(d)}\rangle=\langle S_j, T^{(d)}\rangle.$$
By Theorem \ref{thm-Glasner-1}, $(N_d(X), \langle S_j, T^{(d)}\rangle)$ is minimal and the $S_j$-minimal points in $N_d(X)$ are dense in $N_d(X)$.

Let $\ep>0$ with $\ep<\frac{\d}{2}$.
Choose an $S_j$-minimal point ${\bf z}=(z_1,z_2,\ldots, z_d)\in N_{d}(X)$ such that
$$\rho_d({\bf z},{\bf x})<\ep<\d/2.$$
By the openness of $\pi$, we may assume that there is some $y_j \in X$ such that $$\rho(y_j,y)< {\ep}\ \text{and}\ \pi(z_j)=\pi(y_j).$$

Let
$$A=\overline{\O}({\bf z}, S_j).$$
Since ${\bf z}$ is $S_j$-minimal, $(A, S_j)$ is a minimal system.
Note that by the definition of $S_j$, for all ${\bf w}=(w_1,w_2,\ldots, w_d)\in A$, one has that $w_j=z_j$.

Let
$$U=A\cap \Big(B_{\ep}(z_1)\times B_{\ep}(z_2)\times \ldots \times B_{\ep}(z_d)\Big)$$
be a non-empty open subset of $A$.

Since $X_{d'}$ is a factor of $Y$, we have that $(z_j,y_j)\in \RP^{[d']}(X,T)$, where $d'= 2d!(d-1)!$. By Lemma \ref{lem-RP}, $\RP^{[d']}(X,T)=\RP^{[d']}(X,T^{j(d-1)!})$. Thus by Theorem \ref{several}, $N_{T^{j(d-1)!}}(z_j, V)\in \F_{Bir_{d'}}$ for each
neighborhood $V$ of $y_j$. Together with the definition of $\F_{Bir_{d'}}$, there is some $n\in \Z$ such that
\begin{equation}\label{h1}
  T^{j(d-1)!n}z_j\in B_{\ep }(y_j),
\end{equation}
and
\begin{equation}\label{h2}
  U\cap S^{-n}_j U\cap S_j^{-2n} U\cap \ldots \cap S^{-d'n}_jU\neq \emptyset.
\end{equation}
Let
\begin{equation*}
  {\bf w'}\in U\cap S^{-n}_j U\cap S_j^{-2n} U\cap \ldots \cap S^{-d'n}_jU\neq \emptyset ,
\end{equation*}
and
\begin{equation*}
 {\bf w}=(w_1,w_2,\ldots,w_d)= S_j^{\frac{d'}{2}n}{\bf w'}\in U.
\end{equation*}
Then by \eqref{h2},
$$S_j^{in}{\bf w}=S_j^{(\frac{d'}{2}+i)n}{\bf w'}\in U, \quad \text{for all $i$ with }\  -\frac{d'}{2}\le i\le \frac{d'}{2}.$$
That is, for all $i$ with $-\frac{d'}{2}\le i\le \frac{d'}{2}$, we have
\begin{equation}\label{h3}
  \begin{split}
     T^{(j-1)in }w_1 & \in B_{\ep}(z_1),\\
      T^{(j-2)in }w_2 & \in B_{\ep}(z_2),\\ & \ldots, \\
      T^{in }w_{j-1}& \in B_{\ep}(z_{j-1}), \\
        w_j& =z_j, \\
       T^{-in }w_{j+1}& \in B_{\ep}(z_{j+1}),\\ & \ldots,\\ T^{-(d-j)in }w_d& \in B_{\ep}(z_d).
   \end{split}
\end{equation}
Since $d'= 2d!(d-1)!$, by \eqref{h3} and \eqref{h1} we have
\begin{equation*}
  \begin{split}
     T^{(d-1)!n }w_1 & \in B_{\ep}(z_1),\\
      T^{(d-1)!2n }w_2 & \in B_{\ep}(z_2),\\ & \ldots, \\
      T^{(d-1)!(j-1)n }w_{j-1}& \in B_{\ep}(z_{j-1}), \\
       T^{(d-1)!jn} w_j& \in B_{\ep}(y_j), \\
       T^{(d-1)!(j+1)n }w_{j+1}& \in B_{\ep}(z_{j+1}),\\ & \ldots,\\ T^{(d-1)!d n }w_d & \in B_{\ep}(z_d).
   \end{split}
\end{equation*}
It follows that
$$\rho_d\Big( \tau_d^{(d-1)!n} {\bf w}, (z_1,\ldots,z_{j-1},y_j,z_{j+1},\ldots, z_d) \Big)< \ep,$$
and hence
$$\rho_d\Big((z_1,\ldots,z_{j-1},y_j,z_{j+1},\ldots, z_d),\overline{\O}({\bf w},\tau_d)\Big)< \ep.$$
Since $\rho_d({\bf z},{\bf x})<\ep$ and $\rho(y_j,y)< \ep$,
we have that
$$\rho_d\Big((z_1,\ldots,z_{j-1},y_j,z_{j+1},\ldots, z_d), (x_1,\ldots, x_{j-1}, y,x_{j+1},\ldots, x_d) \Big)<\ep .$$
Thus
\begin{equation}\label{h4}
  \rho_d\Big((x_1,\ldots, x_{j-1}, y,x_{j+1},\ldots, x_d), \overline{\O}({\bf w},\tau_d) \Big)<\ep+\ep=2\ep.
\end{equation}
Since ${\bf w}\in U$, $\rho_d({\bf w},{\bf z})<\ep$, it follows that
$$\rho_d({\bf w}, {\bf x}) < \rho_d({\bf w}, {\bf z}) + \rho_d({\bf z}, {\bf x}) <\ep+\ep<\d.$$
Hence $${\bf w}\in B_\d({\bf x})\subset O.$$

By \eqref{h4}, we have
$$\rho_d\Big((x_1,\ldots, x_{j-1}, y,x_{j+1},\ldots, x_d), \overline{\O}(O , \tau_d) \Big)<2\ep.$$
As $\ep$ is arbitrary, we have
$$(x_1,\ldots, x_{j-1}, y,x_{j+1},\ldots, x_d) \in \overline{\O}(O,\tau_d).$$
The proof of Claim is completed. \hfill $\square$

\medskip

Now we will use the Claim to show that the orbit closure $L=\overline{\O}(O, \tau_{d})$ is $\pi^{(d)}$-saturated.

For $j\in \{1,2,\ldots, d\}$, let
$${\bf z}=(z_1,z_2,\ldots, z_d)\in L=\overline{\O}(O,\tau_d).$$
We show that $${\bf z'}=(z_1,z_2,\ldots, z_{j-1}, z_j',z_{j+1},\ldots, z_d)\in L,$$
where $\pi(z_j')=\pi(z_j)$.

Since ${\bf z}=(z_1,z_2,\ldots, z_d)\in L=\overline{\O}(O,\tau_d),$
there are some sequences $\{{\bf x^i}=(x_1^i,x_2^i,\ldots,x_d^i)\}_{i\in \N}\subset O$, $\{n_i\}_{i\in \N}$ such that
$$\tau_d^{n_i}{\bf x^i}\rightarrow {\bf z},\quad {i\to\infty}.$$
By Claim we have that
$$\{x^i_1\}\times \ldots\times \{ x^i_{j-1}\}\times \pi^{-1}(\pi(x^i_j))\times \{x^i_{j+1}\}\times \ldots \times \{ x^i_d\}\subset L=\overline{\O}(O,\tau_d).$$
Since $\pi$ is open, it follows that
\begin{equation*}
  \begin{split}
   &\quad  \tau_d^{n_i}\Big (\{x^i_1\}\times \ldots\times \{ x^i_{j-1}\}\times \pi^{-1}(\pi(x^i_j))\times \{x^i_{j+1}\}\times \ldots \times \{ x^i_d\} \Big ) \\
    & =\{T^{n_i}x^i_1\}\times \ldots\times \{ T^{(j-1)n_i}x^i_{j-1}\}\times \pi^{-1}(\pi(T^{jn_i}x^i_j)),\{T^{(j+1)n_i}x^i_{j+1}\}\times \ldots \times \{T^{dn_i} x^i_d\} \\
    & \xrightarrow{i\to\infty} \{z_1\}\times \{z_2\}\times \ldots \times \{z_{j-1}\}\times \pi^{-1}(\pi(z_j))\times \{z_{j+1}\}\times \ldots \times \{z_d\}\\
    & \subset \overline{\O}(L,\tau_d)=L.
   \end{split}
\end{equation*}

To sum up, we have that if
${\bf z}=(z_1,z_2,\ldots, z_d)\in L=\overline{\O}(O,\tau_d),$ then for each $j\in \{1,2,\ldots, d\}$,
$$(z_1,z_2,\ldots, z_{j-1}, z_j',z_{j+1},\ldots, z_d)\in L,$$
where $\pi(z_j')=\pi(z_j)$.
Thus,  $(z_1,z_2,\ldots, z_d)\in L$ if and only if $(z_1',z_2',\ldots, z_d')\in L$ whenever for all $j\in \{1,2,\ldots, d\}$, $\pi(z_i)=\pi(z_i')$. That is,
$L= \overline{\O}(O,\tau_d)$ is $\pi^{(d)}$-saturated.

The proof is complete.
\end{proof}

\section{Proofs of Theorems B and C}

In this section we prove Theorems B and C.

\subsection{Proof of Theorem B for pro-nilsystems}\
\medskip

For a t.d.s. $(X,T)$ and subsets $U,V$ of $X$, put $N(U,V)=\{n\in \Z: U\cap T^{-n}V\not=\emptyset\}$.

\begin{lem}\label{prod} Let $(X,T)$ and $(Y,S)$ be two minimal t.d.s.
Then the maximal equicontinuous factor of $(X\times Y, T^n\times S^m)$ is $X_{eq}\times Y_{eq}$ for any $n,m\in\N$,
where $X_{eq}=X_1,Y_{eq}=Y_1$ are the maximal equicontinuous factors of $X$ and $Y$ respectively.
\end{lem}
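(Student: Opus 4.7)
The plan is to identify the projection $\pi_X\times \pi_Y\colon X\times Y \to X_{eq}\times Y_{eq}$ (with the induced $T^n\times S^m$ action on the target) as the maximal equicontinuous factor of $(X\times Y, T^n\times S^m)$. The easy direction is to show that $(X_{eq}\times Y_{eq}, T^n\times S^m)$ is itself equicontinuous: since $(X_{eq},T)$ and $(Y_{eq},S)$ are equicontinuous, the families $\{T^{nk}|_{X_{eq}}\}_{k\in\Z}$ and $\{S^{mk}|_{Y_{eq}}\}_{k\in\Z}$ are equicontinuous, so the product family on $X_{eq}\times Y_{eq}$ is equicontinuous under the max metric. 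By the defining universal property of the maximal equicontinuous structure relation, this gives the inclusion $S_{eq}(X\times Y, T^n\times S^m) \subseteq R_{\pi_X\times \pi_Y}$.

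The nontrivial direction is $R_{\pi_X\times \pi_Y} \subseteq S_{eq}(X\times Y, T^n\times S^m)$, and I would establish it by first exhibiting ``horizontal'' and ``vertical'' $\RP$-pairs. Explicitly, I claim that if $(x_1,x_2)\in R_{\pi_X}$ and $y\in Y$, then $((x_1,y),(x_2,y))\in \RP(X\times Y, T^n\times S^m)$. The key observation is that by Theorem \ref{thm-RP} and Lemma \ref{lem-RP}, $R_{\pi_X}=\RP(X,T)=\RP(X,T^n)$, so given $\ep>0$ we can choose $x_1',x_2'$ within $\ep$ of $x_1,x_2$ and $k\in\Z$ with $\rho(T^{nk}x_1',T^{nk}x_2')<\ep$. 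The pair $((x_1',y),(x_2',y))$ then approximates $((x_1,y),(x_2,y))$ and is contracted by $(T^n\times S^m)^k$, because the second coordinates coincide and contribute $0$ to the distance. The analogous argument gives that every vertical pair $((x,y_1),(x,y_2))$ with $(y_1,y_2)\in R_{\pi_Y}$ lies in $\RP(X\times Y, T^n\times S^m)$.

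To finish, I would take any $((x_1,y_1),(x_2,y_2))\in R_{\pi_X\times \pi_Y}$ and chain
\[
(x_1,y_1) \;\sim\; (x_2,y_1) \;\sim\; (x_2,y_2)
\]
through a horizontal and a vertical $\RP$-pair. Since $\RP\subseteq S_{eq}$ and $S_{eq}$ is by construction an equivalence relation, transitivity gives $((x_1,y_1),(x_2,y_2))\in S_{eq}(X\times Y, T^n\times S^m)$, yielding the desired reverse inclusion. The one subtle point worth flagging is that $(X\times Y, T^n\times S^m)$ need not be minimal; however, the argument is carried out entirely at the level of the closed invariant equivalence relation $S_{eq}$, which is defined for arbitrary t.d.s., and the production of horizontal and vertical $\RP$-pairs never invokes minimality of the product.
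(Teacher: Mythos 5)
Your proof is correct, but it takes a genuinely different and somewhat more elementary route than the paper's. Both proofs begin the same way (reduce to showing $R_{\pi_X\times\pi_Y}\subseteq S_{eq}(X\times Y,T^n\times S^m)$ after observing that the equicontinuous product is itself equicontinuous), but they diverge on the key step. The paper attacks a general pair $((x_1,y_1),(x_2,y_2))$ with $\pi_X(x_1)=\pi_X(x_2)$ and $\pi_Y(y_1)=\pi_Y(y_2)$ directly, and to produce a single time $k$ that simultaneously contracts both coordinates it invokes the nontrivial fact that for an $\RP$-pair in a system with dense minimal points the return-time set $N(U,W)$ to a diagonal neighborhood is thickly syndetic, together with the filter property of $\F_{ts}$; this yields $R_{\pi_X\times\pi_Y}\subseteq\RP(X\times Y,T^n\times S^m)$. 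You instead decompose the pair as a chain $(x_1,y_1)\sim(x_2,y_1)\sim(x_2,y_2)$ through a horizontal and a vertical step. Because one coordinate is literally equal in each step, its contribution to the max-metric distance is zero, so no simultaneous contraction is needed at all -- each horizontal or vertical pair is in $\RP$ for a trivial reason -- and you then close the chain via the transitivity of $S_{eq}$, which holds by definition for an arbitrary t.d.s. This avoids the thickly syndetic machinery entirely. The trade-off is that the paper's argument yields the slightly stronger conclusion $R_{\pi_X\times\pi_Y}=\RP(X\times Y,T^n\times S^m)$ (hence $\RP$ is already an equivalence relation on the possibly non-minimal product), while your route establishes only containment in $S_{eq}$; for the lemma as stated, either suffices. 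Your flagging of the non-minimality issue at the end is exactly the right subtlety to address, and you handle it correctly: both the inclusion $\RP\subseteq S_{eq}$ and the production of horizontal/vertical $\RP$-pairs are carried out without assuming minimality of $X\times Y$, and the minimality of $X$ and $Y$ individually is all that is used (via $R_{\pi_X}=\RP(X,T)=\RP(X,T^n)$ and its analogue for $Y$).
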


\begin{proof}
First recall two facts used in the proof. (1) If $(Z,R)$ has a dense set of minimal point,
so does $(Z\times Z, R\times R)$. (2) If $(Z,R)$ is a t.d.s with dense minimal points and $(x,y)\in \RP(Z)$, then
for each neighborhood $U$ of $(x,y)$ and each neighborhood $W$ of the diagonal $\Delta_Z=\{(z,z):z\in Z\}$,
$N(U,W)$ is thickly syndetic.

\medskip

Now let $\pi:(X,T)\lra (X_{eq},T)$ and $\phi:(Y,S)\lra (Y_{eq},S)$ be the factor maps to the maximal equicontinuous factors
of $(X,T)$ and $(Y,S)$ respectively. Fix $n,m\in\N$. Since $\RP(X,T)=\RP(X,T^n)$ and $\RP(Y,S)=\RP(Y,S^m)$, we have that $\pi:(X,T^n)\lra (X_{eq},T^n)$ and
$\phi:(Y,S^m)\lra (Y_{eq},S^m)$ are the factor maps to the maximal equicontinuous factors
of $(X,T^n)$ and $(Y,S^m)$ respectively. Since $(X_{eq}\times Y_{eq},T^n\times S^m)$ is equicontinuous, it remains to
show that $R_{\pi\times \phi}\subset \RP(T^n\times S^m)$.

To this aim, we assume that $(x_1,y_1), (x_2,y_2)\in X\times Y$ with $(x_1,x_2)\in \RP(X,T^n)$ and $(y_1,y_2)\in \RP(Y,S^m)$. Then $\pi(x_1)=\pi(x_2)$
and $\phi(y_1)=\phi(y_2)$. Fix $\ep>0$. Let $U_1\times V_1$, $U_2\times V_2$ be neighborhoods of $(x_1,y_1)$
and $(x_2,y_2)$ respectively. Moreover, let $W_1$ and $W_2$ be the $\ep$-neighborhoods of $\Delta_X$ and $\Delta_Y$
respectively. Then both of $N_{T^n\times T^n}(U_1\times U_2,W_1)$ and $N_{S^m\times S^m}(V_1\times V_2, W_2)$ are thickly syndetic. Thus
$$ N_{T^n\times T^n}(U_1\times U_2,W_1)\cap N_{S^m\times S^m}(V_1\times V_2, W_2)\neq \emptyset.$$
Pick $k\in
N_{T^n\times T^n}(U_1\times U_2,W_1)\cap N_{S^m\times S^m}(V_1\times V_2, W_2)$. Then there are $(x_1',y_1')\in U_1\times V_1$ and $(x_2',y_2')\in U_2\times V_2$
such that
$$T^{kn}\times T^{kn}(x_1',x_2')\in W_1, \quad S^{km}\times S^{km}(y_1',y_2')\in W_2.$$
Denote the metrics of $X$ and $Y$ by $\rho_X$ and $\rho_Y$, and let the metric of $X\times Y$ be $$\rho_{X\times Y}((x_1,y_1),(x_2,y_2))=\max\{\rho_X(x_1,x_2), \rho_Y(y_1,y_2)\}.$$ Then we have that
$$\rho_{X\times Y}((T^{n}\times S^m)^k(x_1',y_1'), (T^{n}\times S^m)^k(x_2',y_2'))\le \rho_X(T^{nk}x_1',T^{nk}x_2')
+\rho_Y(S^{mk}y_1',S^{mk}y_2')<2\ep.$$
As $\ep$ is arbitrary, we have
$$\Big((x_1,x_2),(y_1,y_2)\Big)\in \RP(X\times Y, T^n\times S^m).$$
This ends the proof.
\end{proof}






\begin{lem}\label{lem-5.2}
Let $\pi:(X,\Gamma)\rightarrow  (Y,\Gamma)$ be a factor map between two minimal systems and $k\in \N$,
where $\Gamma$ is abelian. If for some $y\in Y$, $\pi^{-1}(y)^2\subset \RP^{[k]}(X)$ then $$R_\pi\subset \RP^{[k]}(X).$$
\end{lem}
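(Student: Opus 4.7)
\medskip
\noindent\textbf{Proof plan.} The plan is to pick an arbitrary $(z_1,z_2)\in R_\pi$ and show $(z_1,z_2)\in \RP^{[k]}(X)$ by combining three ingredients already established in the excerpt: $\RP^{[k]}(X)$ is a closed $\Gamma$-invariant equivalence relation (Theorem~\ref{thm-RP-d}, whose statement the authors explicitly extend to abelian $\Gamma$); the proximal relation satisfies $\mathbf{P}(X)\subset \RP^{[k]}(X)$ (from the inclusion chain displayed just after the definition of $\RP^{[d]}$); and the Auslander--Ellis theorem quoted in the Preliminaries, applied to the diagonal $\Gamma$-action on $X\times X$.

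\medskip
\noindent First I would form the orbit closure $M=\overline{\Gamma\cdot(z_1,z_2)}\subset R_\pi$ (using that $R_\pi$ is closed and $\Gamma$-invariant). By the Auslander--Ellis theorem applied to the diagonal action of $\Gamma$ on $X\times X$, there exist a minimal subset $M''\subset M$ and a point $(m_1,m_2)\in M''$ such that $(z_1,z_2)$ is proximal to $(m_1,m_2)$ in $(X\times X,\Gamma)$; in particular $(z_1,m_1),(z_2,m_2)\in \mathbf{P}(X)$, which are therefore in $\RP^{[k]}(X)$.

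\medskip
\noindent Next I would argue that $(m_1,m_2)\in \RP^{[k]}(X)$. Since $M''\subset R_\pi$, its image $(\pi\times\pi)(M'')$ is a closed $\Gamma$-invariant subset of $\Delta(Y)\cong Y$; by minimality of $(Y,\Gamma)$ this image equals $\Delta(Y)$, so there is some $(w_1,w_2)\in M''$ with $\pi(w_1)=\pi(w_2)=y$. Then $(w_1,w_2)\in \pi^{-1}(y)^2\subset \RP^{[k]}(X)$ by hypothesis. Because $\RP^{[k]}(X)$ is closed and $\Gamma$-invariant and $M''$ is minimal, the intersection $M''\cap \RP^{[k]}(X)$, being closed, $\Gamma$-invariant, and non-empty, must equal all of $M''$; in particular $(m_1,m_2)\in \RP^{[k]}(X)$.

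\medskip
\noindent Finally, the equivalence-relation property of $\RP^{[k]}(X)$ (Theorem~\ref{thm-RP-d}(1) in the abelian setting) applied to the chain
$$z_1\;\RP^{[k]}\;m_1\;\RP^{[k]}\;m_2\;\RP^{[k]}\;z_2$$
yields $(z_1,z_2)\in \RP^{[k]}(X)$, completing the proof. I do not anticipate a genuine obstacle here: once one recognises that the hypothesis gives a foothold in every minimal subset of $R_\pi$ (via the $\pi\times\pi$-image hitting the diagonal of $Y$), the remaining content is just transitivity combined with the fact that proximal pairs lie in $\RP^{[k]}$. The only mild point to check is that the Auslander--Ellis theorem and the equivalence-relation property both apply to the diagonal action of the abelian group $\Gamma$, which is granted by the statements quoted in Section~2.
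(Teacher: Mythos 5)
Your proof is correct and follows essentially the same route as the paper's: apply Auslander--Ellis to obtain a minimal point in $R_\pi$ proximal to the given pair, use minimality of $Y$ to show the orbit closure of that minimal point meets $\pi^{-1}(y)^2$, deduce via invariance and closedness that the whole minimal orbit closure lies in $\RP^{[k]}(X)$, and finish by transitivity of $\RP^{[k]}(X)$ together with $\mathbf{P}(X)\subset\RP^{[k]}(X)$. The only difference is cosmetic: you spell out explicitly why the image $(\pi\times\pi)(M'')$ must be all of $\Delta(Y)$, a step the paper condenses to ``it is easy to see.''
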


\begin{proof}
Let $z\in Y$ and $x_1,x_2\in \pi^{-1}(z)$. Then by Auslander-Ellis Theorem  $(x_1,x_2)$ is proximal to some
minimal point $(y_1,y_2)$ with $\pi(y_1)=\pi(y_2)$.
Since $(Y,\Gamma)$ is minimal, it is easy to see that $$\emptyset \neq \overline{\O}((y_1,y_2),\Gamma)\cap \pi^{-1}(y)^2\subset \RP^{[k]}(X).$$
As $\overline{\O}((y_1,y_2),\Gamma)$ is minimal and $\RP^{[k]}(X)$ is $\Gamma$-invariant closed subset of $X\times X$, we have
$$\overline{\O}((y_1,y_2),\Gamma)\subset \RP^{[k]}(X).$$
In particular, $(y_1,y_2)\in \RP^{[k]}(X)$.
By ${\bf P}(X)\subset \RP^{[k]}(X)$ we have $$(x_1,y_1), (x_2,y_2)\in \RP^{[k]}(X).$$
We conclude that $(x_1,x_2)\in \RP^{[k]}(X)$ since $\RP^{[k]}(X)$ is an equivalence relation by Theorem \ref{thm-RP-d}.
\end{proof}

\begin{cor}\label{cor-5.3}
Let $\pi:(X,T)\rightarrow  (Y,T)$ be a factor map between two minimal systems and $d,k\in \N$.
Then $$\pi^{(d)}: (N_d(X), \langle\sigma_d, \tau_d\rangle)\rightarrow (N_d(Y),\langle\sigma_d, \tau_d\rangle)$$
is a factor map. If for some $x\in X$
$$\{x^{(d)}\}\times \Big(\pi^{-1}(\pi(x))\Big)^d\subset \RP^{[k]}(N_d(X),\langle\sigma_d, \tau_d\rangle),$$
Then $$R_{\pi^{(d)}}\subset \RP^{[k]}(N_d(X),\langle\sigma_d, \tau_d\rangle).$$
\end{cor}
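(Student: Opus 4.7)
The plan is to reduce Corollary 5.3 to a direct application of Lemma 5.2 applied to the factor map $\pi^{(d)}:N_d(X)\to N_d(Y)$, with acting group $\Gamma=\langle\sigma_d,\tau_d\rangle$.

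First I would verify that the set-up of Lemma 5.2 is available. By Theorem 2.23 (Glasner) both $(N_d(X),\langle\sigma_d,\tau_d\rangle)$ and $(N_d(Y),\langle\sigma_d,\tau_d\rangle)$ are minimal, and $\pi^{(d)}$ is clearly continuous, surjective, and intertwines the $\sigma_d$- and $\tau_d$-actions, so it is a factor map between minimal systems. The group $\Gamma=\langle\sigma_d,\tau_d\rangle$ is abelian, since $\sigma_d$ and $\tau_d$ act coordinatewise by powers of $T$, and powers of $T$ commute. Thus Lemma 5.2 is applicable provided we can exhibit a single fibre of $\pi^{(d)}$ whose square lies in $\RP^{[k]}(N_d(X),\Gamma)$.

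The natural candidate for such a fibre is $y=(\pi(x))^{(d)}\in N_d(Y)$. Observe that
\[
(\pi^{(d)})^{-1}\bigl((\pi(x))^{(d)}\bigr)=\bigl(\pi^{-1}(\pi(x))\bigr)^d\cap N_d(X)\subset \bigl(\pi^{-1}(\pi(x))\bigr)^d,
\]
so the hypothesis
\[
\{x^{(d)}\}\times\bigl(\pi^{-1}(\pi(x))\bigr)^d\subset \RP^{[k]}(N_d(X),\Gamma)
\]
implies in particular that for every point $z=(z_1,\ldots,z_d)$ in this fibre we have $(x^{(d)},z)\in \RP^{[k]}(N_d(X),\Gamma)$. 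Note that $x^{(d)}$ itself lies in this fibre (since $\pi(x)=\pi(x)$).

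Next I would use that $\RP^{[k]}(N_d(X),\Gamma)$ is an equivalence relation. This is the content of Theorem 2.10 applied to the minimal abelian-group system $(N_d(X),\Gamma)$. Thus, for any two points $u,v$ in the fibre over $(\pi(x))^{(d)}$ we have both $(x^{(d)},u)\in\RP^{[k]}$ and $(x^{(d)},v)\in\RP^{[k]}$, and then by symmetry and transitivity $(u,v)\in\RP^{[k]}(N_d(X),\Gamma)$. This shows
\[
\bigl((\pi^{(d)})^{-1}((\pi(x))^{(d)})\bigr)^2\subset \RP^{[k]}(N_d(X),\Gamma).
\]

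Finally, Lemma 5.2 applied to $\pi^{(d)}:(N_d(X),\Gamma)\to(N_d(Y),\Gamma)$ and the point $y=(\pi(x))^{(d)}$ yields $R_{\pi^{(d)}}\subset \RP^{[k]}(N_d(X),\Gamma)$, as desired. There is no real obstacle here; the only thing to be careful about is to check that the hypotheses of Lemma 5.2 (minimality of both systems and the abelian-group acting) are met, and to invoke the equivalence-relation property of $\RP^{[k]}$ under the $\langle\sigma_d,\tau_d\rangle$-action rather than under the single transformation $T$.
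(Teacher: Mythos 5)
Your proof is correct and follows essentially the same route as the paper's: apply Lemma 5.2 to the factor map $\pi^{(d)}:(N_d(X),\langle\sigma_d,\tau_d\rangle)\to(N_d(Y),\langle\sigma_d,\tau_d\rangle)$, use the hypothesis to show that every point of the fibre over $(\pi(x))^{(d)}$ is $\RP^{[k]}$-related to $x^{(d)}$, and then invoke the equivalence-relation property of $\RP^{[k]}$ (Theorem~\ref{thm-RP-d}) to conclude that the square of that fibre lies in $\RP^{[k]}$. The only difference is that you spell out the verification that $\langle\sigma_d,\tau_d\rangle$ is abelian and that both $N_d$-systems are minimal, and you note the fibre is $\bigl(\pi^{-1}(\pi(x))\bigr)^d\cap N_d(X)$; these are details the paper leaves implicit but do not change the argument.
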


\begin{proof}
Let $y=\pi(x)$ and $\G_d=\langle\sigma_d, \tau_d\rangle$. Note that $(N_d(X,T), \G_d)$ is minimal. By Lemma \ref{lem-5.2}, we need to show that
$$ \Big((\pi^{(d)})^{-1}(y^{(d)})\Big)^2\subset \RP^{[k]}(N_d(X),\langle\sigma_d, \tau_d\rangle). $$
Let ${\bf x}, {\bf x'}\in (\pi^{(d)})^{-1}(y^{(d)}) $. Since $x^{(d)}\in N_d(X)$, by our assumption
$$(x^{(d)},{\bf x}), (x^{(d)},{\bf x'})\in \{x^{(d)}\}\times \Big(\pi^{-1}(\pi(x))\Big)^d\subset \RP^{[k]}(N_d(X),\langle\sigma_d, \tau_d\rangle).$$
Since $\RP^{[k]}$ is an equivalence relation (Theorem \ref{thm-RP-d}),
$$({\bf x},{\bf x'})\in \RP^{[k]}(N_d(X),\langle\sigma_d, \tau_d\rangle).$$
That is,
$$ \Big((\pi^{(d)})^{-1}(y^{(d)})\Big)^2\subset \RP^{[k]}(N_d(X),\langle\sigma_d, \tau_d\rangle). $$
The proof is complete.
\end{proof}

We will prove the following theorem.
\begin{thm} \label{proof-nil}Let $(X,T)$ be a minimal pro-nilsystem. Then the maximal equicontinuous
factor of $(N_d(X), \langle\sigma_d, \tau_d\rangle)$
is $(N_d(X_{eq}), \langle\sigma_d, \tau_d\rangle)$.
\end{thm}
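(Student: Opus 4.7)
My plan is to verify separately that $(N_d(X_{eq}),\G_d)$ is equicontinuous and that it is the \emph{maximal} equicontinuous factor of $(N_d(X),\G_d)$. The first is immediate: since $T$ acts equicontinuously on $X_{eq}$, both generators $\sigma_d=T^{(d)}$ and $\tau_d=T\times T^2\times\cdots\times T^d$ act coordinate-wise on $X_{eq}^d$ via powers of $T$, hence equicontinuously; this descends to the closed $\G_d$-invariant subsystem $N_d(X_{eq})$. Theorem~\ref{thm-RP}(2), applied to the factor map $\pi_1^{(d)}|_{N_d(X)}\colon (N_d(X),\G_d)\to (N_d(X_{eq}),\G_d)$, then yields the easy inclusion $\RP(N_d(X),\G_d)\subset R_{\pi_1^{(d)}}$.

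For the reverse inclusion $R_{\pi_1^{(d)}}\subset \RP(N_d(X),\G_d)$, I plan to invoke Corollary~\ref{cor-5.3} with $k=1$ applied to the factor map $\pi_1\colon X\to X_{eq}$, reducing the problem to exhibiting a single $x\in X$ with
\[
\{x^{(d)}\}\times\bigl(\pi_1^{-1}(\pi_1(x))\bigr)^d\subset \RP(N_d(X),\G_d).
\]
This requires both (a) the saturation $(\pi_1^{-1}(\pi_1(x)))^d\subset N_d(X)$, and (b) the claimed regional proximality. For (a), I use that pro-nilsystems are distal, so all canonical factor maps are open; combining Lemma~\ref{qiu} ($F_k=X_k$), Theorem~\ref{thm-nil-case} ($X_i$ is an $(i{+}1)$-step topological characteristic factor of $X$), and Lemma~\ref{lem-saturated} iteratively along the tower $X\to X_{d-1}\to\cdots\to X_1$ (invoking Lemma~\ref{eli-thm} where convenient), I obtain $N_d(X)=(\pi_1^{(d)})^{-1}(N_d(X_{eq}))$, so every element of the full fiber $(\pi_1^{-1}(\pi_1(x)))^d$ lies in $N_d(X)$.

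For (b), iterating Lemma~\ref{prod} gives that the maximal equicontinuous factor of $(X^d,\tau_d)$ is $(X_{eq}^d,\tau_d)$, so $\RP(X^d,\tau_d)=R_{\pi_1^{(d)}}$; combined with $\RP(X^d,\tau_d)\subset \RP(X^d,\G_d)\subset R_{\pi_1^{(d)}}$ (the last inclusion from equicontinuity of $(X_{eq}^d,\G_d)$), this yields $\RP(X^d,\G_d)=R_{\pi_1^{(d)}}$. Hence $(x^{(d)},\mathbf{z})\in \RP(X^d,\G_d)$ for every $\mathbf{z}\in (\pi_1^{-1}(\pi_1(x)))^d$. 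The chief obstacle is to upgrade this regional proximality from the ambient $X^d$ to the subsystem $N_d(X)$, since the witnessing approximants produced by the definition of $\RP(X^d,\G_d)$ need not lie in $N_d(X)$. I plan to overcome this using the $\pi_1^{(d)}$-saturation of $N_d(X)$ from step (a) together with minimality of $(N_d(X),\G_d)$ and density of $\tau_d$-minimal points (Theorem~\ref{thm-Glasner}): neighborhoods of $x^{(d)}$ and $\mathbf{z}$ in $N_d(X)$ are pullbacks of open sets in $N_d(X_{eq})$, and the thickly syndetic structure of the relevant return-time sets (inherited from equicontinuity of $X_{eq}$) furnishes a common $n\in\Z$ whose action realises approximate closeness of lifts staying inside $N_d(X)$, thereby producing witnesses of $\RP(N_d(X),\G_d)$.
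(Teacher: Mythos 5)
Your plan has a genuine and fatal gap in step (a). You claim
\[
N_d(X)=(\pi_1^{(d)})^{-1}\bigl(N_d(X_{eq})\bigr),
\]
i.e.\ that $N_d(X)$ is $\pi_1^{(d)}$-saturated. This is false for pro-nilsystems of step $\ge 2$ once $d\ge 4$. Take the Heisenberg nilsystem $X=\T^2$ with $T(x,y)=(x+\alpha,\,y+x)$, so $X_{eq}=\T$. A point $\bigl((u_j,v_j)\bigr)_{j=1}^{4}$ in the $\G_4$-orbit of the diagonal has $v_j=y+(k+jn)x+\binom{k+jn}{2}\alpha$, a polynomial of degree $2$ in $j$; hence the third difference $v_1-3v_2+3v_3-v_4$ vanishes, and this closed constraint persists on $N_4(X)$. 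But $(\pi_1^{(4)})^{-1}(N_4(X_{eq}))$ imposes no constraint whatsoever on the $v$-coordinates, so it strictly contains $N_4(X)$. The chain you invoke cannot reach $X_1$: Lemma~\ref{eli-thm} together with Lemma~\ref{qiu} and Theorem~\ref{thm-nil-case} yields only $N_d(X)=(\pi_{d-2}^{(d)})^{-1}(N_d(X_{d-2}))$, and for $X_{d-2}$ (a $(d-2)$-step system) the Furstenberg/nil tower stabilizes at level $d-2$, so iterating Lemma~\ref{lem-saturated} gives nothing new — the correct formula with $N_d$ on the left never descends below $X_{d-2}$.

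Because step (a) fails, the hypothesis of Corollary~\ref{cor-5.3} with $\pi=\pi_1$ simply cannot be satisfied (the set $\{x^{(d)}\}\times\bigl(\pi_1^{-1}(\pi_1(x))\bigr)^d$ is not even contained in $N_d(X)^2$), and the saturation you were planning to lean on in step (b) to push $\RP(X^d,\G_d)$-witnesses into $N_d(X)$ is not there. The paper's proof is structured quite differently precisely to avoid this pitfall: it inducts on $d$, projects to the first $d-1$ coordinates, and uses Lemma~\ref{sharpnumber} (namely $(x^{(d-1)},y)\in N_d(X)\Rightarrow (x,y)\in\RP^{[d-2]}(X)$) together with Lemma~\ref{qiu} to reduce to $\RP_{\pi_{d-3}}$; the saturation input from Lemma~\ref{eli-thm} is then invoked at the correct level, namely for $N_{d-1}(X)$ over $X_{d-3}$, not for $N_d(X)$ over $X_1$. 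This choice of level, and the passage through the last coordinate via the projection and an $\id\times\tau_{d-1}$-argument, is the heart of the proof and is absent from your proposal.
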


\begin{proof}
Let $\pi_1:X\lra X_{1}=X_{eq}$ be the factor map to the maximal equicontinuous factor and $d\in \N$.
Let $\G_d=\langle\sigma_d, \tau_d\rangle$. Then $\pi_1$ induces a factor map
$$\pi^{(d)}_{1}: (N_d(X),\G_d)\lra (N_d(X_{1}),\G_d).$$
Since $(X_1,T)$ is equicontinuous, so is $(N_d(X_1),\G_d)$. By Theorem \ref{thm-RP} it
follows that $$\RP((N_d(X),\G_d))\subset R_{\pi_1^{(d)}}.$$
To show that the maximal equicontinuous
factor of $(N_d(X),\G_d)$ is $(N_d(X_{1}),\G_d)$, it remains to show that
\begin{equation}\label{y5.1}
  R_{\pi_1^{(d)}}\subset \RP(N_d(X),\G_d).
\end{equation}
For simplicity, we use
$$(x_1,\ldots,x_d)\underset{\G_d}\sim (y_1,\ldots,y_d), \ \text{and}\ (x_1,\ldots,x_d)\underset{\tau_d}\sim (y_1,\ldots,y_d)$$
to denote $\big((x_1,\ldots,x_d),(y_1,\ldots,y_d)\big)\in \RP(N_d(X),\G_d)$ and
$\big((x_1,\ldots,x_d),(y_1,\ldots,y_d)\big)\in \RP(N_d(X),\tau_d)$ respectively.

\medskip

We prove \eqref{y5.1} by induction on $d$.

\medskip
\noindent{\bf Step 1}:
For $d=1$ it is clear. For $d=2$, $\G_2$ is generated by $T\times T$ and $\id\times T$.
It is clear that $N_2(X)=X\times X$
and $N_2(X_{1})=X_{1}\times X_{1}$. By Lemma~\ref{prod} if $\pi_1(x_1)=\pi_1(x_2)$ and $\pi_1(y_1)=\pi_1(y_2)$
then $((x_1,y_1),(x_2,y_2))$ is regionally proximal for $T\times T^2$, and thus $((x_1,y_1),(x_2,y_2))\in \RP(N_2(X),\G_2)$.

\medskip
\noindent {\bf Step 2:}
Now we assume that \eqref{y5.1} holds for $d-1$, where $d\ge 3$. Let $$p_1: (N_d(X),\G_d)
\rightarrow (N_{d-1}(X),\G_{d-1}), (x_1,\ldots,x_{d-1},x_d)\mapsto (x_1,\ldots,x_{d-1})$$
be the projection to the first $d-1$ coordinates.
By Lemma \ref{lem-5.2}, we need to show that for some $z\in X_1$,
$$(\pi_1^{(d)})^{-1}(z^{(d)})\times (\pi_1^{(d)})^{-1}(z^{(d)})\subset \RP(N_d(X),\G_d).$$
Let $$(x_1,x_2,\ldots,x_d), (y_1,y_2,\ldots,y_d)\in (\pi_1^{(d)})^{-1}(z^{(d)}).$$
Then by mapping $p_1$, $(x_1,\ldots,x_{d-1}), (y_1,\ldots,y_{d-1})\in N_{d-1}(X)$ and
$(x_1,\ldots,x_{d-1}), (y_1,\ldots,y_{d-1})\in (\pi_1^{(d-1)})^{-1}(z^{(d-1)})$.
By the inductive hypothesis, $\Big((x_1,\ldots,x_{d-1}), (y_1,\ldots,y_{d-1})\Big)\in \RP(N_{d-1}(X),\G_{d-1})$, i.e.
$$(x_1,\ldots,x_{d-1})\underset{\G_{d-1}}\sim (y_1,\ldots,y_{d-1}).$$
By Theorem \ref{thm-RP}, there is some $x_d',y_d'\in X$ such that
$(x_1,\ldots,x_{d-1},x_d'), (y_1 ,\ldots, y_{d-1}, y_d')\in N_d(X)$ with
$$(x_1,\ldots,x_{d-1},x_d')\underset{\G_{d}}\sim(y_1 ,\ldots, y_{d-1}, y_d').$$
If we can show that $$(x_1,\ldots,x_{d-1},x_d)\underset{\G_{d}}\sim(x_1,\ldots,x_{d-1},x_d'),
\quad (y_1 ,\ldots, y_{d-1}, y_d)\underset{\G_{d}}\sim (y_1 ,\ldots, y_{d-1}, y_d'),$$
then by equivalence of $\RP$, we have that
$$(x_1,\ldots,x_{d-1},x_d)\underset{\G_{d}}\sim(y_1 ,\ldots, y_{d-1}, y_d),$$
that is, \eqref{y5.1} holds for $d$. So we only need to show that
$(x_1,\ldots,x_{d-1},x_d)\underset{\G_{d}}\sim(x_1,\ldots,x_{d-1},x_d')$,
and similarly we will have $(y_1 ,\ldots, y_{d-1}, y_d)\underset{\G_{d}}\sim (y_1 ,\ldots, y_{d-1}, y_d')$.

Since $(x_1,\ldots,x_{d-1},x_d)\in N_d(X)$, for a fixed $x\in \pi_1^{-1}(z)$,
there is some sequence $\{g_i\}_{i\in \N}\subset \G_d$ such that
$$g_i(x_1,\ldots,x_{d-1},x_d)\to x^{(d)}, \quad i\to\infty.$$
Without loss of generality, we assume that
$$g_i(x_1,\ldots,x_{d-1},x_d')\to (x^{(d-1)},y), \quad i\to\infty,$$ for some $y\in X$.
If $x^{(d)} \underset{\G_{d}}\sim (x^{(d-1)},y)$, then by $$\Big ( (x_1,\ldots,x_{d-1},x_d),(x_1,\ldots,x_{d-1},x_d')\Big)\in
\overline{\O}\Big(\big(x^{(d)}, (x^{(d-1)},y)\big), \G_d^{(2)} \Big)$$ we will have
$(x_1,\ldots,x_{d-1},x_d)\underset{\G_{d}}\sim(x_1,\ldots,x_{d-1},x_d')$, where $\G^{(2)}_d=\{(g,g): g\in \G_d\}$.
Thus it left to show that
$$x^{(d)} \underset{\G_{d}}\sim (x^{(d-1)},y).$$

By Lemma \ref{sharpnumber} and  $(x^{(d-1)},y)\in N_d(X)$, $(x,y)\in \RP^{[d-2]}(X,T)$. Thus
$(x,y)\in \RP_{\pi_{d-3}}(X,T)$ by Lemma \ref{qiu}, where $\pi_{d-3}: X\rightarrow X_{d-3}$.
Note that when $d=3$, $X_0$ is the trivial system and $\RP_{\pi_0}(X,T)=\RP(X,T)$.

We proceed to prove that
$$x^{(d-1)}\underset{\tau_{d-1}}\sim (x^{(d-2)},y).$$
If $d=3$, then by Step 1, $(x,x)\underset{T\times T^2}\sim (x,y)$. If $d\ge 4$, then we prove as follows.
Let $\ep>0$.  Since $(x,y)\in \RP_{\pi_{d-3}}(X,T)=\RP_{\pi_{d-3}}(X,T^{d-1})$,
there are $x',y'\in X$, $w\in X_{d-3}$ and $n\in \Z$ such that
$$\rho(x,x')<\ep, \rho(y,y')<\ep, \pi_{d-3}(x')=\pi_{d-3}(y')=w,\ \text{and}\  \rho(T^{(d-1)n}x',T^{(d-1)n}y')<\ep.$$
By Lemma \ref{eli-thm}
we have that $(\pi_{d-3}^{(d-1)})^{-1}N_{d-1}(X_{d-3})=N_{d-1}(X)$. Since $w^{(d-1)}\in N_{d-1}(X_{d-3})$, we deduce
$$\Big(\pi_{d-3}^{-1}(w)\Big)^{d-1}= (\pi_{d-3}^{(d-1)})^{-1} (w^{(d-1)})\subset N_{d-1}(X).$$
This implies that $(x')^{(d-1)},((x')^{(d-2)},y')\in N_{d-1}(X)$.
To sum up, for each $\ep>0$, there are $(x')^{(d-1)},((x')^{(d-2)},y')\in N_{d-1}(X)$ and $n\in \Z$
such that $\rho_{d-1}\big(x^{(d-1)} , (x')^{(d-1)}\big)<\ep$, $\rho_{d-1}\big((x^{(d-2)},y) , ((x')^{(d-2)},y') \big)<\ep $  and
$$\rho_{d-1}\Big(\tau_{d-1}^n\big((x')^{(d-1)}\big), \tau_{d-1}^n((x')^{(d-2)},y')\Big) <\ep,$$
which implies that $$x^{(d-1)}\underset{\tau_{d-1}}\sim (x^{(d-2)},y).$$

We continue our proof.
For each $\ep>0$, let $U, V$ be open
neighborhoods of $x^{(d)}$ and $(x^{(d-1)},y)$ in $N_d(X)$
with ${\rm diam}(U)<\ep/2$, ${\rm diam}(V)<\ep/2$ respectively.
Let $$p_{2}:(N_d(X),\G_d)\rightarrow (N_{d-1}(X),\G_{d-1}), (x_1,x_2, \ldots, x_d)\mapsto (x_2,\ldots,x_{d})$$
be the projection to the last $d-1$ coordinates.
Then $p_2 (U),p_2(V)$ be open neighborhoods of $x^{(d-1)}$ and $(x^{(d-2)},y)$ in $N_{d-1}(X)$ respectively.
Since $x^{(d-1)}\underset{\tau_{d-1}}\sim (x^{(d-2)},y)$, there are ${\bf y} \in p_2(U)$, ${\bf y'} \in p_{2}(V)$
and $n\in \Z$ such that
$$\rho_{d-1}\big(\tau_{d-1}^n({\bf y}), \tau_{d-1} ^n({\bf y'})\big)<\ep.$$ There are $y_1,y_1'\in X$ such that $(y_1,{\bf y})\in U$ and
$(y_1',{\bf y'}) \in V$. As ${\rm diam}(U)<\ep/2$, ${\rm diam}(V)<\ep/2$, it follows that $\rho (y_1,y_1')<\ep$. This implies that
\begin{equation}\label{equi-2}
\rho_d \Big((\id\times \tau_{d-1}^n(y_1,{\bf y}), (\id\times \tau_{d-1}^n(y_1',{\bf y'})\Big)<\ep.
\end{equation}
This shows that $$x^{(d)} \underset{\G_{d}}\sim (x^{(d-1)},y),$$
since $\G_d$ is also generated by $\sigma_d$ and $\id \times \tau_{d-1}$.
The whole proof is complete.
\end{proof}

\subsection{Proof of Theorem B for general systems}\
\medskip

Actually, we will show more. First we need a lemma.

\begin{lem}\label{lem-5.5}
Let $\pi: (X,\Gamma)\rightarrow (Y,\Gamma)$ be a factor map between minimal systems and $k\in \N$,
where $\Gamma$ is abelian. If $\pi$ is proximal, then the maximal $k$-step pro-nilfactor of $(X,\Gamma)$
is the same as the one of $(Y,\Gamma)$, i.e. $X_k(X)=X_k(Y)$.
\end{lem}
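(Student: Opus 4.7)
The plan is to reduce the identification $X_k(X)=X_k(Y)$ to the single inclusion $R_\pi\subset\RP^{[k]}(X)$, which then follows immediately from the proximality hypothesis combined with the chain of inclusions recorded in the preliminaries.

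First I would invoke Theorem~\ref{lift}(1) (which applies to minimal systems of abelian group actions), giving
\[
\pi\times\pi\bigl(\RP^{[k]}(X,\Gamma)\bigr)=\RP^{[k]}(Y,\Gamma).
\]
In particular, $(x_1,x_2)\in\RP^{[k]}(X)$ implies $(\pi(x_1),\pi(x_2))\in\RP^{[k]}(Y)$. Thus $\pi$ descends to a factor map
\[
\bar\pi:X_k(X)=X/\RP^{[k]}(X)\lra X_k(Y)=Y/\RP^{[k]}(Y),
\]
and this map is automatically surjective because $\pi$ is.

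Next I would check that $\bar\pi$ is injective, which is equivalent to the inclusion $R_\pi\subset\RP^{[k]}(X)$. By the hypothesis that $\pi$ is proximal, $R_\pi\subset\mathbf{P}(X)$. Combining this with the chain
\[
\mathbf{P}(X)\subset\cdots\subset\RP^{[k+1]}(X)\subset\RP^{[k]}(X)\subset\cdots\subset\RP^{[1]}(X),
\]
noted in the preliminaries, yields $R_\pi\subset\RP^{[k]}(X)$. (The inclusion $\mathbf{P}(X)\subset\RP^{[k]}(X)$ is also transparent from the structural fact that $X/\RP^{[k]}(X)$ is a pro-nilsystem and therefore distal, so no nontrivial proximal pair can survive in it.) Consequently $\bar\pi$ identifies $\RP^{[k]}$-classes of $\pi$-related points, hence is a bijection.

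There is essentially no obstacle here: the entire argument is a direct combination of Theorem~\ref{lift} with the elementary fact $\mathbf{P}\subset\RP^{[k]}$, and the conclusion $X_k(X)=X_k(Y)$ is just the statement that $\bar\pi$ is an isomorphism of minimal $k$-step pro-nilsystems.
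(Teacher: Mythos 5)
Your proof is correct and follows essentially the same route as the paper's: both arguments come down to combining Theorem~\ref{lift}(1) (lifting of $\RP^{[k]}$ through factor maps), the fact that $\RP^{[k]}$ is an equivalence relation on minimal systems, and the chain $R_\pi\subset\mathbf{P}(X)\subset\RP^{[k]}(X)$ supplied by proximality. The paper phrases it as a proof by contradiction; you phrase it as showing the induced map $\bar\pi:X_k(X)\to X_k(Y)$ is injective, but these are the same argument. The one place you should slow down is the assertion that injectivity of $\bar\pi$ ``is equivalent to'' $R_\pi\subset\RP^{[k]}(X)$: the direction you actually need (from $R_\pi\subset\RP^{[k]}(X)$ to injectivity) is not an abstract tautology but uses the lifting $\pi\times\pi(\RP^{[k]}(X))=\RP^{[k]}(Y)$ together with transitivity of $\RP^{[k]}(X)$ — precisely the two ingredients the paper invokes explicitly at this step. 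Since you have already cited both, the argument is complete; just make the dependence of that equivalence on lifting and transitivity visible rather than implicit.
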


\begin{proof}
First we have the following commutative diagram:
\[
\begin{CD}
X @>{\pi_{k,X}}>> X_k(X)\\
@V{\pi}VV      @VV{\pi'}V\\
Y @>>{\pi_{k,Y}}> X_k(Y)
\end{CD}
\]

We need to show that $X_k(X)=X_k(Y)$. Otherwise there are $x_1'\neq x_2'\in X_k(X)$ such that
$\pi'(x'_1)=\pi'(x'_2)=z$. Choose $x_1,x_2\in X$ such that $\pi_{k,X}(x_1)=x_1', \pi_{k,X}(x_2)=x_2'$.
Let $y_1=\pi(x_1), y_2=\pi(x_2)$. Since $\pi'\circ \pi_{k,X}=\pi_{k,Y}\circ \pi$,
we have $\pi_{k,Y}(y_1)=\pi_{k,Y}(y_2)=z$, i.e. $(y_1,y_2)\in \RP^{[k]}(Y)$.

By Theorem \ref{lift}, there are $(\widetilde{x_1},\widetilde{x_2})\in \RP^{[k]}(X)$
such that $\pi\times \pi(\widetilde{x_1},\widetilde{x_2})=(y_1,y_2)$. Since $\pi$ is proximal,
$(\widetilde{x_1},x_1),(\widetilde{x_2},x_2)\in {\bf P}(X)$. Since ${\bf P}\subset \RP^{[k]}$, by Theorem \ref{thm-RP-d}
$$(x_1,x_2)\in \RP^{[k]}(X).$$
It follows that $x_1'=\pi_{k,X}(x_1)=\pi_{k,X}(x_2)=x_2'$, a contradiction.
\end{proof}

\begin{thm}\label{c-1-1}
Let $(X,T)$ be minimal and $d\in\N$. Then for each $k\in\N$
the maximal $k$-step pro-nilfactor of $(N_d(X), \langle\sigma_d, \tau_d\rangle)$
is the same as the one of $(N_d(X_\infty), \langle\sigma_d, \tau_d\rangle)$.
Moreover, the maximal equicontinuous factor of $(N_d(X), \langle\sigma_d, \tau_d\rangle)$
is $(N_d(X_1), \langle\sigma_d, \tau_d\rangle)$, where $X_1=X_{eq}$ is the maximal equicontinuous factor of $X$.
\end{thm}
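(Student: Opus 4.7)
My plan proceeds in four stages. First, apply Theorem A to obtain the $O$-diagram
\[
\begin{CD}
X @<{\sigma^*}<< X^*\\
@V{\pi_\infty}VV      @VV{\pi^*}V\\
X_\infty @<{\tau^*}<< X_\infty^*
\end{CD}
\]
with $\sigma^*,\tau^*$ almost one-to-one and $\pi^*$ open, and, for every $d\ge 2$, a dense $G_\delta$ set $\Omega_d\subset X^*$ along which $L_x=\overline{\O}(x^{(d)},\tau_d)$ is $(\pi^*)^{(d)}$-saturated. Almost one-to-one extensions are proximal (Lemma \ref{proximal-prod}), so the induced $\G_d$-extensions $(\sigma^*)^{(d)}\colon N_d(X^*)\to N_d(X)$ and $(\tau^*)^{(d)}\colon N_d(X^*_\infty)\to N_d(X_\infty)$ are proximal between minimal systems; hence by Lemma \ref{lem-5.5} they preserve the maximal $k$-step pro-nilfactor. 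The problem therefore reduces to identifying $X_k(N_d(X^*),\G_d)$ with $X_k(N_d(X^*_\infty),\G_d)$.

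Next, functoriality (Theorem \ref{th3}) immediately gives that $X_k(N_d(X^*_\infty),\G_d)$ is a factor of $X_k(N_d(X^*),\G_d)$ through $(\pi^*)^{(d)}$. For the reverse inclusion I aim at the key fact
\[
R_{(\pi^*)^{(d)}}\subset \RP^{[k]}(N_d(X^*),\G_d),
\]
which by Corollary \ref{cor-5.3} reduces to producing a single $x\in X^*$ with
\[
\{x^{(d)}\}\times \bigl((\pi^*)^{-1}(\pi^*(x))\bigr)^{d}\subset \RP^{[k]}(N_d(X^*),\G_d).
\]
Choose $x\in\Omega_d$. Since $\sigma^*$ is proximal, Lemma \ref{lem-5.5} yields $X^*/\RP^{[k]}(X^*)=X_k$; as $X_k$ is a factor of $X_\infty$ and hence of $X^*_\infty$, the composition $X^*\to X^*_\infty\to X_k$ shows $R_{\pi^*}\subset\RP^{[k]}(X^*,T)$, so each coordinate pair $(x,x_i)$ in any tuple $(x_1,\ldots,x_d)\in((\pi^*)^{-1}(\pi^*(x)))^d$ is regionally proximal of order $k$ for $T$.

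The technical heart is to upgrade these coordinatewise facts to the tuplewise statement $(x^{(d)},(x_1,\ldots,x_d))\in\RP^{[k]}(N_d(X^*),\G_d)$. The characteristic factor property places the entire fibre inside $L_x$, so $x^{(d)}$ and $(x_1,\ldots,x_d)$ are joined by $\tau_d$-iterates inside $N_d(X^*)$. I then invoke the recurrence characterization of Theorem \ref{several}, which identifies $\RP^{[k]}$-proximality with the property that the return-time sets meet every Nil$_k$ Bohr$_0$ set. Because the family of Nil$_k$ Bohr$_0$ sets is closed under finite intersection (realized inside a product nilsystem), for every $\delta>0$ one can extract a single parameter vector $\mathbf{g}\in\G_d^{k}$ that simultaneously $\delta$-approximates the $\RP^{[k]}$-defining condition for all $d$ coordinates, with the $\tau_d$-iterates inside $L_x$ providing the mechanism that bridges $x^{(d)}$ to the tuple $(x_1,\ldots,x_d)$. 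This produces the required fibre inclusion and completes the first claim.

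Finally, the second assertion follows by combining the first with $k=1$ and Theorem \ref{proof-nil} applied to the pro-nilsystem $X_\infty$: the latter identifies the maximal equicontinuous factor of $(N_d(X_\infty),\G_d)$ as $(N_d((X_\infty)_1),\G_d)=(N_d(X_1),\G_d)$. The hardest part of the argument is the synchronization in the third paragraph: without Theorem A's guarantee that every fibre of $(\pi^*)^{(d)}$ over a diagonal point sits inside a single $\tau_d$-orbit closure, there is no direct way to turn the $d$ independent $\RP^{[k]}$-proximalities of the coordinate pairs into a joint $\RP^{[k]}$-proximality for the $\G_d$-action, and it is the filter property of Nil$_k$ Bohr$_0$ sets that ultimately permits a common parameter vector to be chosen.
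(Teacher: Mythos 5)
Your first two paragraphs follow the paper's reduction exactly: the $O$-diagram, the passage to $N_d(X^*)$ and $N_d(X^*_\infty)$ via Lemma~\ref{lem-5.5}, and the reduction through Corollary~\ref{cor-5.3} to the fibre inclusion $\{x^{(d)}\}\times ((\pi^*)^{-1}(\pi^*(x)))^d\subset \RP^{[k]}(N_d(X^*),\G_d)$ for some transitive point $x$. The final paragraph (the $k=1$ case together with Theorem~\ref{proof-nil}) is also the paper's argument. The problem is in your third paragraph, which is the heart of the matter.

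Your proposed synchronization mechanism does not work as stated, for two independent reasons. First, Theorem~\ref{several} is a statement about $\RP^{[k]}(X,T)$ for a minimal $\Z$-system; the Poincar\'e/Birkhoff/Nil$_k$~Bohr$_0$ characterization is intrinsically tied to subsets of $\Z$ and there is no version in the paper that applies to $\RP^{[k]}(N_d(X^*),\G_d)$ where $\G_d=\langle\sigma_d,\tau_d\rangle$ is a $\Z^2$-action. So you cannot simply "invoke the recurrence characterization" at the level of $N_d(X^*)$. Second, even if you stay at the level of $X^*$, the family $\F_k$ of Nil$_k$~Bohr$_0$ sets is indeed a filter, but Theorem~\ref{several}(4) places $N_T(x,U_i)$ in the \emph{dual} family $\F_k^*$, which is closed under supersets but \emph{not} under finite intersection. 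Knowing that each $N_T(x,U_i)$ meets every Nil$_k$~Bohr$_0$ set does not let you conclude that $\bigcap_i N_T(x,U_i)$ does, so there is no way to "extract a single parameter vector" from the $d$ coordinatewise conditions. And you would additionally have to ensure the approximating $d$-tuples land in $N_d(X^*)$, which a coordinatewise argument does not control.

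The paper sidesteps all of this with a more concrete construction that never invokes Theorem~\ref{several} for this step. The essential input is that $\Omega$ can be chosen so that for \emph{every} $l\in\N$ (not just $l=d$) the orbit closure $\overline{\O}(x^{(l)},\tau_l)$ is $(\pi^*)^{(l)}$-saturated. Using this higher-dimensional saturation one finds, for any $\ep>0$, a single integer $n$ with $T^{in}x\in B_\ep(x)$ for $1\le i\le d$ and $T^{(dj+1)in}x\in B_\ep(z_i)$ for $1\le i\le d$, $1\le j\le k+1$; the exponents appearing here are pairwise distinct, which is what makes the target achievable inside the saturated orbit closure. Then set ${\bf x}=\tau_d^n(x^{(d)})$, ${\bf y}=\tau_d^{dn}({\bf x})$, both manifestly in $N_d(X^*)$, with ${\bf x}$ near $x^{(d)}$, ${\bf y}$ near $(z_1,\ldots,z_d)$, and $\tau_d^{dnj}({\bf x})$ near $\tau_d^{dnj}({\bf y})$ for $1\le j\le k$; taking the parameter vector to be the $k$ copies of $\tau_d^{dn}\in\G_d$ verifies the $\RP^{[k]}(N_d(X^*),\G_d)$ condition directly from the definition. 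In short, the missing ingredient in your write-up is the use of the full sequence of saturation statements from Theorem~A (for all $l$, not just $l=d$) to manufacture one iterate doing all the work, rather than attempting to combine separate coordinatewise $\RP^{[k]}$ facts via a filter argument.
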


\begin{proof} If $X$ is equicontinuous, then the theorem holds. Thus, we assume that $X$ is not equicontinuous.
Let $X_\infty$ be the $\infty$-step pro-nilfactor of $X$. Then we have the following diagram
\[
\begin{CD}
X @<{\sigma^*}<< X^*\\
@V{\pi_\infty}VV      @VV{\pi^*}V\\
X_\infty @<{\tau^*}<< X_\infty^*
\end{CD}
\]
where $\sigma^*,\tau^*$ are almost one to one and $\pi^*$ is open by Theorem A. By Theorem A
we know that there is a dense $G_\delta$ subset $\Omega$ of $X^*$ such that for each $x\in \Omega$,
and each $l\in \N$, the orbit closure of $x^{(l)}$ under $\tau_l$ is $(\pi^*)^{(l)}$-saturated.

The diagram above induces the following commutative diagram:
\[
\begin{CD}
(N_d(X),\G_d) @<{(\sigma^*)^{(d)}}<< (N_d(X^*),\G_d)\\
@V{\pi_\infty^{(d)}}VV      @VV{(\pi^*)^{(d)}}V\\
(N_d(X_\infty),\G_d) @<{(\tau^*)^{(d)}}<<(N_d( X_\infty^*),\G_d)
\end{CD}
\]
where $(\sigma^*)^{(d)}, (\tau^*)^{(d)} $ are almost one to one, $\G_d=\langle\sigma_d, \tau_d\rangle$.
By Lemma \ref{lem-5.5}, for $k\in \N$, to show that the maximal $k$-step pro-nilfactor of $(N_d(X), \G_d)$
is the same as the one of $(N_d(X_\infty), \G_d)$, it suffices to show that the maximal $k$-step pro-nilfactor of $(N_d(X^*), \G_d)$
is the same as the one of $(N_d(X^*_\infty), \G_d)$.
To that aim, we need to show that
$$R_{(\pi^*)^{(d)}}\subset \RP^{[k]}(N_d(X^*), \G_d).$$
By Corollary \ref{cor-5.3}, we show that for some $x\in X^*$
$$\{x^{(d)}\}\times \Big((\pi^*)^{-1}(\pi^*(x))\Big)^d\subset \RP^{[k]}(N_d(X^*),\G_d),$$

Let $x\in \Omega$ and let $(z_1,\ldots, z_d)\in \Big((\pi^*)^{-1}(\pi^*(x))\Big)^d $.
We show $(x^{(d)},(z_1,\ldots, z_d))\in \RP^{[k]}(N_d(X^*), \G_d)$.

For each $\ep>0$, let $U_i=B_\ep(x)$ and $V_i=B_\ep(z_i)$, $1\le i\le d$.
Then there is $n\in\mathbb{Z}$ such that
$${\bf x}=(T^{n}x, T^{2n}x,\ldots, T^{dn}x)\in U_1\times U_2\times \ldots\times U_d,$$
$$\tau_d^{dnj}({\bf x})\in V_1\times V_2\times\ldots\times V_d, 1\le j\le k+1.$$

Now we explain why we can do this. If $x^{(d)}=(z_1,\ldots, z_d)$ we just put $n=0$, otherwise we may assume that $n\not=0$.
For $1\le j \le k+1$, and $1\le i_1<i_2\le d$, the $i_1$-th and $i_2$-th coordinates of $\tau_d^{dnj}({\bf x})$
are $T^{dnji_1+i_1n}x$ and $T^{dnji_2+i_2n}x$ respectively. It is clear they are distinct.

For $1\le j_1<j_2\le k+1$ and $1\le i_1\le i_2\le d$, the $i_1$-th coordinate of $\tau_d^{dnj_1}({\bf x})$ and $i_2$-th coordinate of $\tau_d^{dnj_2}({\bf x})$
are $T^{dnj_1i_1+i_1n}x$ and $T^{dnj_2i_2+i_2n}x$ respectively. It is clear they are also distinct, since $j_1i_1=j_2i_2$ implies $i_1\not=i_2$.

Let ${\bf y}=\tau_d^{dn}({\bf x})$. We have $$\rho_d(\tau_d^{dnj}({\bf x)},\tau_d^{dnj}({\bf y}))\le d\ep\ \text{ for} \ j=1,2,\ldots,k.$$
This implies that $(x^{(d)}, (z_1,\ldots, z_d))\in \RP^{[k]}(N_d(X^*),\G_d)$ by the definition.
Thus we have proved the maximal $k$-step nilfactor factor of $(N_d(X^*), \G_d)$
is the same as the maximal $k$-step nilfactor of $(N_d(X^*_\infty), \G_d)$.
It follows that the maximal $k$-step nilfactor factor of $(N_d(X), \G_d)$
is the same as the maximal $k$-step nilfactor of $(N_d(X_\infty),\G_d)$.

\medskip

Moreover, the maximal equicontinuous factor of $(N_d(X), \G_d)$
is  same as the maximal equicontinuous factor of $(N_d(X_\infty),\G_d)$.
By Theorem \ref{proof-nil},  the maximal equicontinuous
factor of $(N_d(X_\infty), \G_d)$
is $(N_d(X_{1}), \G_d)$. Thus the maximal equicontinuous factor of $(N_d(X), \G_d)$
is $(N_d(X_1), \G_d)$. The proof is complete.
\end{proof}

A similar proof yields the following result.

\begin{thm} \label{c-1-1-distal}
Let $(X,T)$ be a minimal t.d.s. with $d, k \in \N$. Then there is a dense $G_\delta$ set $\Omega$ such that for each $x\in \Omega$,
the maximal $k$-step pro-nilfactor of $\overline{\O}(x^{(d)},\tau_d)$ is the same as the one of
$\overline{\O}((\pi_\infty x)^{(d)},\tau_d)$.
\end{thm}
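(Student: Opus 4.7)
The plan is to deduce Theorem \ref{c-1-1-distal} by transplanting the argument of Theorem \ref{c-1-1} from $(N_d(X^*), \langle\sigma_d, \tau_d\rangle)$ down to the (typically much smaller) $\tau_d$-minimal orbit closure $L_{x^*} := \overline{\O}((x^*)^{(d)}, \tau_d)$ of a single generic point, after passing to the open modification provided by the $O$-diagram. Applying the $O$-diagram to $\pi_\infty: X \to X_\infty$ yields almost one-to-one extensions $\sigma^*: X^* \to X$, $\tau^*: X_\infty^* \to X_\infty$ and an open factor map $\pi^*: X^* \to X_\infty^*$. By Theorem A there is a dense $G_\delta$ set $\Omega^* \subset X^*$ on which $L_{x^*}$ is $(\pi^*)^{(d)}$-saturated; by Theorem \ref{thm-Glasner} we may further shrink $\Omega^*$ so that $(x^*)^{(d)}$ is $\tau_d$-minimal, whence $(L_{x^*}, \tau_d)$ is minimal for every $x^* \in \Omega^*$. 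A standard argument then transfers $\Omega^*$ to a dense $G_\delta$ set $\Omega \subset X$ of points $x$ such that $(\sigma^*)^{-1}(x) = \{x^*\} \subset \Omega^*$ and $\pi_\infty(x)$ is a one-to-one point of $\tau^*$.

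For $x \in \Omega$, the induced maps $\sigma^{*(d)}|_{L_{x^*}} : L_{x^*} \to L_x$ and $\tau^{*(d)}|_{L_{\pi^* x^*}} : L_{\pi^* x^*} \to L_{\pi_\infty x}$ are almost one-to-one factor maps between minimal $\tau_d$-systems: if $X_0 \subset X$ denotes the ($T$-invariant) dense $G_\delta$ of one-to-one points of $\sigma^*$, then $X_0^d \cap L_x$ is a dense $G_\delta$ in $L_x$ because it contains the entire $\tau_d$-orbit of $x^{(d)} \in X_0^d$, and every fibre meeting $X_0^d$ is a singleton. Since almost one-to-one extensions of minimal systems are proximal, Lemma \ref{lem-5.5} reduces the problem to showing that the maximal $k$-step pro-nilfactor of $(L_{x^*}, \tau_d)$ agrees with that of $(L_{\pi^* x^*}, \tau_d)$. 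By saturation, the fibre of $(\pi^*)^{(d)}|_{L_{x^*}}$ over $(\pi^* x^*)^{(d)}$ equals $((\pi^*)^{-1}(\pi^*(x^*)))^d$, so applying Lemma \ref{lem-5.2} together with the transitivity of $\RP^{[k]}$ (Theorem \ref{thm-RP-d}), it suffices to prove
\[
\big((x^*)^{(d)}, (z_1, \ldots, z_d)\big) \in \RP^{[k]}(L_{x^*}, \tau_d)
\]
for every $(z_1, \ldots, z_d) \in ((\pi^*)^{-1}(\pi^*(x^*)))^d$.

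For the final step, given $\varepsilon > 0$ I would mimic the central calculation in the proof of Theorem \ref{c-1-1}, namely find $n \in \Z$ satisfying $\tau_d^n((x^*)^{(d)}) \in B_\varepsilon((x^*)^{(d)})$ and $\tau_d^{(dj+1)n}((x^*)^{(d)}) \in B_\varepsilon((z_1, \ldots, z_d))$ for $j = 1, \ldots, k+1$. Setting $\xi := \tau_d^n((x^*)^{(d)})$ and $\eta := \tau_d^{(d+1)n}((x^*)^{(d)})$, both in $L_{x^*}$, then gives $\rho_d(\tau_d^{jdn}\xi, \tau_d^{jdn}\eta) < 2\varepsilon$ for $j = 1, \ldots, k$, witnessing $\RP^{[k]}$ with the vector $(dn, \ldots, dn) \in \Z^k$. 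The hard part will be the existence of such $n$: this does not follow from plain minimality of $(L_{x^*}, \tau_d)$ since the constraints couple several different $\tau_d$-powers of a single orbit. My plan is to apply Theorem \ref{thm-Glasner-1} to the product system $(L_{x^*}^{k+2}, \tilde\Phi)$ with $\tilde\Phi := \tau_d \times \tau_d^{d+1} \times \tau_d^{2d+1} \times \cdots \times \tau_d^{(k+1)d+1}$: the orbit closure of $\Delta_{k+2}(L_{x^*})$ under $\tilde\Phi$ is minimal under the enlarged group $\langle \sigma_{k+2}(\tau_d), \tilde\Phi\rangle$, and showing via the $(\pi^*)^{(d)}$-saturation of $L_{x^*}$ that the target point $\big((x^*)^{(d)}, (z_1, \ldots, z_d), \ldots, (z_1, \ldots, z_d)\big)$ lies in this orbit closure then produces the required $n$ by the density of the orbit of $\Delta_{k+2}((x^*)^{(d)})$ under the larger group.
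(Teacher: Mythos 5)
Your proposal diverges from the paper's proof in one substantive respect: the paper works directly with an arbitrary pair $(x_1,\ldots,x_d)$, $(y_1,\ldots,y_d)\in \overline{\O}(x^{(d)},\tau_d)$ with $\pi^*(x_i)=\pi^*(y_i)$, and tries to locate $n$ directly for this pair, while \emph{explicitly declining} to pass through Lemma~\ref{lem-5.2} — the paper even remarks at the end that since $\overline{\O}(x^{(d)},\tau_d)$ is in general not minimal, ``we can not use exactly the same arguments as we used in Theorem~\ref{c-1-1}.'' You, in contrast, enforce $\tau_d$-minimality of $L_{x^*}$ at the outset (a further Baire-category intersection, justified via Theorem~\ref{thm-Glasner}), pass through the almost-one-to-one/proximal reductions and Lemma~\ref{lem-5.5} to get down to $X^*$ and $X^*_\infty$, and then invoke Lemma~\ref{lem-5.2} to reduce to the single fibre over $(\pi^*x^*)^{(d)}$, i.e.\ to showing $\big((x^*)^{(d)},(z_1,\ldots,z_d)\big)\in\RP^{[k]}$. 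This reduction is not cosmetic. In the paper's direct version, the requested $n$ must satisfy $T^{in}x\approx x_i$ and $T^{i(dj+1)n}x\approx y_i$ with $\pi^*(x_i)=\pi^*(y_i)=u_i$; on the maximal equicontinuous factor this forces simultaneously $n\alpha\approx t$ and $dn\alpha\approx 0$, where $t$ is the common difference of the arithmetic progression $(u_1,\ldots,u_d)$ — an impossible pair of constraints once $dt\neq 0$. After your reduction, $(x_1,\ldots,x_d)$ is the diagonal $(x^*)^{(d)}$, so $t=0$ and this obstruction evaporates. So your route is not merely ``a different route''; it is the one that closes a hole that the paper's terse two-line existence claim papers over.

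That said, your own plan for the existence of $n$ is not yet airtight. You propose to show the target $\mathbf{w}=\big((x^*)^{(d)},(z_1,\ldots,z_d),\ldots,(z_1,\ldots,z_d)\big)$ lies in $N_{\tilde\Phi}=\overline{\O}(\Delta_{k+2}(L_{x^*}),\tilde\Phi)$ ``via the $(\pi^*)^{(d)}$-saturation of $L_{x^*}$.'' But that saturation only yields $(z_1,\ldots,z_d)\in L_{x^*}$, hence $\mathbf{w}\in (L_{x^*})^{k+2}$; it does not by itself place $\mathbf{w}$ in the smaller set $N_{\tilde\Phi}$, whose structure is controlled by a different (two-parameter) orbit closure. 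Moreover, even granting $\mathbf{w}\in N_{\tilde\Phi}$, minimality of $N_{\tilde\Phi}$ under the enlarged group $\langle\sigma_{k+2}(\tau_d),\tilde\Phi\rangle$ gives you approximations by $\sigma_{k+2}(\tau_d)^m\tilde\Phi^n\big(\Delta_{k+2}((x^*)^{(d)})\big)$, which produce a pair $(m,n)$ — one then takes $\xi=\tau_d^{m+n}((x^*)^{(d)})$ and $\eta=\tau_d^{m+(d+1)n}((x^*)^{(d)})$ rather than your stated $\xi,\eta$; the bookkeeping still closes, but is not what you wrote. The cleaner and fully justified argument is the one Theorem~A already hands you on the set $\Omega^*$ (which, as the paper emphasizes, works simultaneously for all $l$): set $l=d\bigl(d(k+1)+1\bigr)$ and observe that the point of $(X^*)^l$ whose $a$-th coordinate is $x^*$ for $a\notin A$, is $x^*$ at position $i$, and is $z_i$ at position $i(dj+1)$ (where $A=\{i(dj+1):1\le i\le d,\ 0\le j\le k+1\}$), has $\pi^{*(l)}$-image equal to $(\pi^*x^*)^{(l)}$ — identical to that of the base point $x^{*(l)}$. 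The $(\pi^*)^{(l)}$-saturation of $\overline{\O}(x^{*(l)},\tau_l)$ therefore puts this point in the orbit closure, and a single $\tau_l^n(x^{*(l)})$ approximating it supplies exactly the $n$ you need. I would replace the $N_{\tilde\Phi}$ detour with this; the rest of your argument — the $O$-diagram, the restriction to $\tau_d$-minimal points, the almost one-to-one $\Rightarrow$ proximal transfer via Lemma~\ref{lem-5.5}, the identification of the fibre with $\bigl((\pi^*)^{-1}(\pi^*(x^*))\bigr)^d$ by saturation, and the final $\RP^{[k]}$ computation with vector $(dn,\ldots,dn)$ — is correct and, in my view, more careful than the paper's own presentation.
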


\begin{proof} The proof is a modification of the previous one.  If $X$ is equicontinuous, then the theorem holds. Thus, we assume that $X$ is not equicontinuous.

Assume first we have the same diagram as
in the proof of Theorem \ref{c-1-1}. By Theorem \ref{key} we know that there is a dense $G_\delta$
subset $\Omega^*$ of $X^*$ such that for each $x\in \Omega^*$, and each $l\in \N$, the orbit closure
of $x^{(l)}$ under $\tau_l$ is $(\pi^*)^{(l)}$-saturated. By the same analysis as
in
the proof of
Theorem \ref{c-1-1}, we show that for each $x\in \Omega^*$,
the maximal $k$-step pro-nilfactor  of $\overline{\O}(x^{(d)},\tau_d)$ is the same as the one of
$\overline{\O}((\pi^* x)^{(d)},\tau_d)$.

\medskip

The result is clear for $d=1$. We now assume that $d\ge 2$. Assume that $(x_1,\ldots,x_d)$,
$(y_1,\ldots,y_d)$ $\in$ $\overline{\O} (x^{(d)},\tau_d)$ with $\pi^*(x_i)=\pi^*(y_i)$ for a given $x\in \Omega^*$.
We will show that $$\Big((x_1,\ldots,x_d),(y_1,\ldots, y_d)\Big)\in \RP^{[k]}(\overline{\O} (x^{(d)},\tau_d), \tau_d)$$ for each $k\in\N$.

To do this, for a given $k\in\N$ and each $\ep>0$, let $U_i=B_\ep(x_i)$ and $V_i=B_\ep(y_i)$, $1\le i\le d$.
Then there is $n\in\Z$ such that
$${\bf x}=(T^{n}x, \ldots, T^{dn}x)\in U_1\times U_2\times \ldots\times U_d,$$
$$\tau_d^{dnj}({\bf x})\in V_1\times V_2\times\ldots\times V_d, 1\le j\le k+1,$$
Let ${\bf y}=\tau_d^{dn}({\bf x})$. Then ${\bf y}=\tau_d^{dn+n}(x^{(d)})\in \overline{\O} (x^{(d)},\tau_d)$ and  $$\rho_d(\tau_d^{dnj}({\bf x)},\tau_d^{dnj}({\bf y}))\le d\ep\ \text{ for} \ j=1,2,\ldots,k.$$
%
This implies that $\Big((x_1,\ldots,x_d),(y_1,\ldots, y_d)\Big)\in \RP^{[k]}(\overline{\O} (x^{(d)},\tau_d), \tau_d)$
by definition of $\RP^{[k]}$. The proof is complete.
\end{proof}
We remark that in general $\overline{\O} (x^{(d)},\tau_d)$ is not minimal. Thus, to show Theorem \ref{c-1-1-distal}
we can not use exactly the same arguments as we used in Theorem \ref{c-1-1}.

\subsection{Proof of Theorem C}\
\medskip

First we have the following simple observation.

\begin{lem} Let $(X,T)$ be a minimal system. We have
\begin{enumerate}
\item $N_d(X, T)=N_d(X, T^{-1})$ and $N_d(X, T^n)\subset N_d(X, T)$ for any $n\in\Z\setminus\{0\}$.


\item $\displaystyle N_d(X, T)=\bigcup_{l,k=0}^{n-1} \sigma_d^l\tau_d^k N_d(X, T^n)$.
\end{enumerate}
\end{lem}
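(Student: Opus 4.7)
The plan is to reduce both claims to elementary manipulations with orbits and closures, via two algebraic identities: $\tau_d(T^{-1}) = \tau_d(T)^{-1}$ and $\tau_d(T^n) = \tau_d(T)^n$. I will also use the easily-verified fact that $\sigma_d$ commutes with $\tau_d(T^n)$ for every $n$, and that $\sigma_d(\Delta_d(X)) = \Delta_d(X)$. From these, $\sigma_d \, N_d(X, T^n) = \overline{\mathcal{O}}(\sigma_d \Delta_d(X), \tau_d(T^n)) = N_d(X, T^n)$, so $N_d(X, T^n)$ is $\sigma_d$-invariant as well as $\tau_d(T^n)$-invariant; in particular $N_d(X,T)$ itself is $\langle \sigma_d, \tau_d\rangle$-invariant.

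For part (1), the equality $N_d(X,T) = N_d(X, T^{-1})$ is immediate because the $\mathbb{Z}$-orbit of $\Delta_d(X)$ under any homeomorphism $S$ agrees with its orbit under $S^{-1}$; apply this to $S = \tau_d(T)$ and note $\tau_d(T^{-1}) = \tau_d(T)^{-1}$. The inclusion $N_d(X, T^n) \subset N_d(X, T)$ follows from $\tau_d(T^n) = \tau_d(T)^n$: the orbit of $\Delta_d(X)$ under the $n$-th iterate $\tau_d(T)^n$ sits inside its orbit under $\tau_d(T)$, and closures respect this.

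For part (2), the inclusion $\supset$ follows from (1) combined with the $\langle\sigma_d, \tau_d\rangle$-invariance of $N_d(X,T)$: each $\sigma_d^l \tau_d^k N_d(X, T^n)$ lies in $N_d(X,T)$. For the inclusion $\subset$, I write any integer $m$ as $m = qn + k$ with $0 \le k < n$, so that
\[
\tau_d(T)^m \Delta_d(X) = \tau_d^k \cdot \tau_d(T^n)^q \Delta_d(X) \subset \tau_d^k \, N_d(X, T^n).
\]
Taking the union over $m \in \mathbb{Z}$ and then the closure (a finite union commutes with closure) gives $N_d(X,T) \subset \bigcup_{k=0}^{n-1} \tau_d^k N_d(X, T^n)$, which is already contained in the stated double union.

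No substantive obstacle is anticipated: both parts boil down to tracking how $\tau_d$ interacts with iteration and inversion of $T$, plus the observation that $\sigma_d$ preserves $\Delta_d(X)$ and commutes with every $\tau_d(T^n)$. The only mildly delicate point is remembering that closures may be freely interchanged with a homeomorphism and with finite unions, which is what permits the identity $\overline{\bigcup_{k=0}^{n-1} \tau_d^k\,\mathcal{O}(\Delta_d(X), \tau_d(T^n))} = \bigcup_{k=0}^{n-1} \tau_d^k N_d(X, T^n)$ used in the last step.
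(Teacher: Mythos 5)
Your proof is correct, and since the paper states this lemma as ``the following simple observation'' without supplying an argument, there is no paper proof to compare against; your write-up simply fills in the omitted routine verification. All the ingredients check out: $\tau_d(T^{-1})=\tau_d(T)^{-1}$ and $\tau_d(T^n)=\tau_d(T)^n$ give part (1) immediately; $\sigma_d$ is a homeomorphism commuting with each $\tau_d(T^n)$ and preserving $\Delta_d(X)$, so $\sigma_d N_d(X,T^n)=N_d(X,T^n)$, which yields both the $\supset$ inclusion in (2) and the $\G_d$-invariance of $N_d(X,T)$; and writing $m=qn+k$ with $0\le k<n$ handles the $\subset$ inclusion after closing up a finite union of closed sets. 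One small remark worth making explicit: because $N_d(X,T^n)$ is $\sigma_d$-invariant, your argument actually establishes the sharper identity $N_d(X,T)=\bigcup_{k=0}^{n-1}\tau_d^k\,N_d(X,T^n)$, so the $\sigma_d^l$ factors in the paper's displayed union are redundant. This is harmless (the stated equality still holds, as the extra translates contribute nothing new), but noting the redundancy makes the decomposition cleaner and may be what you want to record if you ever invoke this lemma to count the components of $N_d(X,T)$.
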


The following proposition will be used in the proof of Theorem C.

\begin{prop} \label{t-n} If $(X,T)$ is a minimal equicontinuous system and
$(X,T^n)$ is minimal for some $n\in\N$, then for any $d\in \N$,
$N_d(X, T)=N_d(X, T^n)$.
\end{prop}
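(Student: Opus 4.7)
The plan is to invoke the classical Halmos--von Neumann type representation of minimal equicontinuous systems: a minimal equicontinuous metric $\Z$-system is topologically conjugate to a minimal rotation on a compact metric abelian group. Identify $X$ with such a group $G$, write $Tx = x+a$, and equip $G$ with a translation-invariant compatible metric $d_G$ (obtained, for instance, by averaging any compatible metric against the Haar measure on $G$). Under this identification, minimality of $(X,T)$ is equivalent to $\overline{\{ka:k\in\Z\}}=G$, and the hypothesis that $(X,T^n)$ is minimal is equivalent to $\overline{\{lna:l\in\Z\}}=G$.

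Under this identification the $\tau_d(T)$-orbit of the diagonal becomes
\[
\O(\Delta_d(X),\tau_d(T))=\{(x+ka,\,x+2ka,\,\ldots,\,x+dka):x\in G,\ k\in\Z\},
\]
and the same with $na$ in place of $a$ describes $\O(\Delta_d(X),\tau_d(T^n))$. The inclusion $N_d(X,T^n)\subset N_d(X,T)$ is immediate from $\tau_d(T^n)=\tau_d(T)^n$, so only the reverse inclusion requires work.

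For the reverse inclusion I would fix a representative point $(x+ka,\,x+2ka,\,\ldots,\,x+dka)$ in $\O(\Delta_d(X),\tau_d(T))$ together with $\ep>0$. Since each map $z\mapsto jz$ from $G$ to $G$ is continuous ($j=1,\ldots,d$), there is $\delta>0$ such that $d_G(z,0)<\delta$ implies $d_G(jz,0)<\ep$ simultaneously for all $j\in\{1,\ldots,d\}$. By density of $\{lna:l\in\Z\}$ in $G$, choose $l\in\Z$ with $d_G(lna-ka,0)<\delta$. Translation invariance of $d_G$ then gives $d_G(jlna,jka)<\ep$ for $j=1,\ldots,d$, so that the $\tau_d(T^n)$-orbit point $(x+lna,\,x+2lna,\,\ldots,\,x+dlna)$ lies within $\ep$ of the target. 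Hence $\O(\Delta_d(X),\tau_d(T))\subset N_d(X,T^n)$, and taking closures yields $N_d(X,T)\subset N_d(X,T^n)$.

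The only conceptual ingredients are the group-rotation model for minimal equicontinuous systems and the joint continuity of the finitely many maps $z\mapsto jz$ on $G$; neither presents a significant obstacle, so the argument reduces to the uniform approximation above. The key idea worth emphasizing is that the $d$ coordinates of $\tau_d$ are coupled multiples of a single translation, which is precisely what makes a single good approximation $lna\approx ka$ force simultaneous approximations $jlna\approx jka$ for all $j\le d$.
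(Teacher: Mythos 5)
Your proof is correct but takes a genuinely different route from the paper's. You invoke the Halmos--von Neumann representation of minimal equicontinuous $\Z$-systems as rotations on compact abelian groups, choose a translation-invariant compatible metric $d_G$ on $G$, and exploit continuity of the maps $z \mapsto jz$ ($1 \le j \le d$) at $0 \in G$ to convert a single approximation $lna \approx ka$ into simultaneous approximations $jlna \approx jka$. The paper instead argues intrinsically with no appeal to the group model: writing a generic point of $N_d(X,T)$ as $(T^{k+l}x, \ldots, T^{k+dl}x)$ for some $k,l\in\Z$ (a $\langle\sigma_d,\tau_d\rangle$-orbit point of $x^{(d)}$), it uses minimality of $(X,T^n)$ to find $q_i$ with $T^{nq_i}x \to T^l x$, and then propagates to $T^{jnq_i}x \to T^{jl}x$ by a telescoping chain in which equicontinuity is applied $j$ times, accumulating error at most $j\ep \le d\ep$; the $T^k x$ component is handled by a separate diagonal approximation. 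Your argument is conceptually cleaner because the group structure couples all $d$ coordinates through a single translation, making the telescoping unnecessary; the paper's is more elementary in that it avoids the representation theorem and uses only the bare definition of equicontinuity. Both proofs address the same nontrivial inclusion $N_d(X,T)\subset N_d(X,T^n)$, the converse being immediate from $\tau_d(T^n)=\tau_d(T)^n$.
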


\begin{proof}
Let $d\in \N$. It is clear that $N_d(X, T^n)\subset N_d(X, T)$. Now we show that $N_d(X, T)\subset N_d(X, T^n)$.  Let $x\in X$. It suffices to show that
$$\O(x^{(d)}, \langle\sigma_d, \tau_d\rangle)= \O((x,\ldots,x),\langle\sigma_d, \tau_d\rangle) \subset N_d(X, T^n).$$
Since each point in $\O(x^{(d)},\langle\sigma_d, \tau_d\rangle)$
has the form of $$(T^{k+l}x,T^{k+2l}x,\ldots, T^{k+dl}x)$$ for some $k,l\in\Z$, we need to show that
$$(T^{k+l}x,T^{k+2l}x,\ldots, T^{k+dl}x)\in N_d(X, T^n).$$

By the assumption that $(X,T^n)$ is minimal, there are sequences $\{p_i\}_{i\in \N}, \{q_i\}_{i\in \N}\subset \Z$ such that
$$T^{np_i}x\lra T^{k}x , \quad T^{nq_i}\rightarrow T^lx, \quad i\to\infty.$$ Let $\ep>0$. Since $(X,T)$ is equicontinuous, there is $\delta>0$
such that if $\rho(x,y)<\delta$ then $\rho(T^ix,T^iy)<\ep$ for all $i\in\Z$.

Since $T^{nq_i}x\rightarrow T^lx, i\to\infty$,
there is some $N\in\N$ such that if $i\ge N$ then $\rho(T^{nq_i}x,T^lx)<\delta$.
This implies that for any $1\le j\le d$ and $i\ge N$ we have
$$\rho(T^{jnq_i}x,T^{(j-1)nq_i+l}x)<\ep,  \rho(T^{(j-1)nq_i+l}x,T^{(j-2)nq_i+2l}x)<\ep,  \ldots,    \rho(T^{nq_i+(j-1)l}x,T^{jl}x)<\ep$$
which implies that $$\rho(T^{jnq_i}x,T^{jl}x)<j\ep\le d\ep, \quad \forall 1\le j\le d.$$ Since $\ep$ is arbitrary,
$$(T^{nq_i}x,T^{n2q_i}x,\ldots,T^{ndq_i}x)\to (T^lx,T^{2l}x,\ldots,T^{dl}x), \quad i\to\infty.$$
It follows that there are $p_i',q_i'\in \Z$ with
$$(T^{np_i'+nq_i'}x,T^{np_i'+n2q_i'}x,\ldots,T^{np_i'+ ndq_i'}x)\to (T^{k+l}x,T^{k+2l}x,\ldots,T^{k+dl}x), \quad i\to\infty.$$
Thus
$$(T^{k+l}x,T^{k+2l}x,\ldots, T^{k+dl}x)\in N_d(X, T^n).$$ The proof is complete.
\end{proof}

\begin{thm}\label{thm-5.10}
Let $G$ be an abelian group and $\Gamma$ be its subgroup with finite index, i.e. $[G: \Gamma]<\infty$. Let $(X,G)$ be a minimal system, and let $\pi: (X, G)\rightarrow (X_{eq}, G)$ be the factor map to its maximal equicontinuous factor. Then $(X,\Gamma)$ is minimal if and only if $(X_{eq},\Gamma)$ is minimal.
\end{thm}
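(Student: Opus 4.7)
The plan is to establish the two implications separately. The forward direction, that $(X,\Gamma)$ minimal implies $(X_{eq},\Gamma)$ minimal, is immediate because any factor of a minimal system is minimal. The substance lies in the converse, which I will prove by showing that, under the hypothesis that $(X_{eq},\Gamma)$ is minimal, any $\Gamma$-minimal subset $M\subseteq X$ must equal all of $X$.

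First I would fix an arbitrary $\Gamma$-minimal subset $M\subseteq X$ (which exists by compactness of $X$ and Zorn's lemma) and exploit the abelianness of $G$: since $\Gamma$ is then normal in $G$, every translate $gM$ with $g\in G$ is again $\Gamma$-minimal. Let $G_M=\{g\in G:gM=M\}$; because $M$ is $\Gamma$-invariant we have $\Gamma\subseteq G_M$, so $[G:G_M]$ divides $[G:\Gamma]$ and is therefore finite. Listing the distinct translates as $g_0 M,g_1 M,\ldots,g_{\ell-1} M$ with $\ell=[G:G_M]$, any two of them are either equal or disjoint (since any two $\Gamma$-minimal sets are), so they are pairwise disjoint; their union is closed and $G$-invariant, so by the minimality of $(X,G)$ it equals $X$. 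Hence $X=\bigsqcup_{i=0}^{\ell-1} g_i M$ is a finite disjoint union of closed sets, so each piece $g_i M$ is in fact clopen and $\Gamma$-invariant.

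Next I would repackage this clopen decomposition as an equicontinuous factor of $(X,G)$. The map $\phi\colon X\to G/G_M$ sending the piece $g_i M$ to the coset $g_i G_M$ is well-defined, continuous, and $G$-equivariant. Since $G/G_M$ is a finite $G$-set, it is trivially equicontinuous; hence by the universal property of the maximal equicontinuous factor, $\phi$ factors through $\pi$ as $\phi=\tilde\phi\circ\pi$ for some continuous $G$-equivariant surjection $\tilde\phi\colon X_{eq}\to G/G_M$.

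The conclusion follows easily: since $\Gamma\subseteq G_M$, the $\Gamma$-action on $G/G_M$ is trivial; then the hypothesis that $(X_{eq},\Gamma)$ is minimal, transported by $\tilde\phi$, forces $(G/G_M,\Gamma)$ to be minimal. But a trivial action on a set is minimal only if the set is a singleton, so $G_M=G$, whence $M$ is $G$-invariant, giving $M=X$ by the minimality of $(X,G)$. I do not anticipate any genuine obstacle; the delicate point is the combined use of the commutativity of $G$ (to make $\Gamma$ normal and $gM$ again $\Gamma$-minimal) and the maximality of $X_{eq}$ (to factor $\phi$ through $\pi$), both of which are the standard levers for this kind of statement.
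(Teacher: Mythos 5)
Your proof is correct and follows essentially the same plan as the paper's: partition $X$ into finitely many translates of a $\Gamma$-minimal set (using abelianness and finite index), read off a finite equicontinuous factor on which $\Gamma$ acts trivially, and derive a contradiction from the minimality of $(X_{eq},\Gamma)$. The one difference worth noting is a technical refinement in how you invoke $X_{eq}$: the paper asserts (without spelling out why) that because $[G:\Gamma]<\infty$, the map $\pi\colon X\to X_{eq}$ is also the maximal equicontinuous factor of the $\Gamma$-system, and then observes that the finite quotient is a non-minimal $\Gamma$-equicontinuous factor. You instead keep the $G$-action in play: your quotient $G/G_M$ is a finite $G$-set, hence $G$-equicontinuous, so the map $\phi$ factors through $\pi$ purely by the universal property of $X_{eq}$ as a $G$-system, and only afterwards do you restrict to $\Gamma$. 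This sidesteps the auxiliary claim that $X_{eq}(G)=X_{eq}(\Gamma)$ and is arguably a touch cleaner; otherwise the two arguments coincide.
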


\begin{proof}
Since $\pi: (X,\Gamma)\rightarrow (X_{eq},\Gamma)$ is a factor map, the minimality of $(X,\Gamma)$ implies the minimality of $(X_{eq},\Gamma)$. Now we show the converse.

Assume that  $(X_{eq},\Gamma)$ is minimal, and we will show that $(X,\Gamma)$ is minimal. Note that $\pi: (X, G)\rightarrow (X_{eq}, G)$ is the factor map to its maximal equicontinuous factor. Since $[G:\Gamma]<\infty$, it follows that $\pi: (X,\Gamma)\rightarrow (X_{eq}, \Gamma)$ is also the factor map to its maximal equicontinuous factor. Thus  any equicontinuous factor of $(X,\Gamma)$ is also a factor of $(X_{eq},\Gamma)$. In particular, any equicontinuous factor of $(X,\Gamma)$ is minimal as $(X_{eq},\Gamma)$ is minimal.

If $(X,\Gamma)$ is not minimal, then there is a non-empty $\Gamma$-minimal subset $W$ of $(X,\Gamma)$ with $W\neq X$. Since $[G: \Gamma]<\infty$, there are $h_1,h_2,\ldots, h_m\in G, m\in \N$ such that $$G=\bigcup_{i=1}^m h_i \Gamma.$$
Since $(W,\Gamma)$ is minimal and $G$ is abelian, $(h_iW,\Gamma)$ is also minimal for all $i\in \{1,2,\ldots, m\}$. Note that $\bigcup_{i=1}^m h_iW$ is $G$-invariant, and we have $X=\bigcup_{i=1}^m h_iW$ as $(X,G)$ is minimal. Since minimal subsets are either identical or disjoint, there is subset $\{g_1,\ldots, g_r\}\subset \{h_1,\ldots, h_m\}$, $2\le r\le m$ such that
$$X=\bigsqcup_{i=1}^rg_i W,$$
where $\bigsqcup$ means disjoint union. Now define
$$\phi: (X,\Gamma)\rightarrow (\{1,2,\ldots,r\},\Gamma), \ g_iW\mapsto \{i\}, \forall i\in \{1,2,\ldots, r\}.$$
Since $(g_iW,\Gamma)$ is minimal for $i\in \{1,2,\ldots, r\}$, $\Gamma=\id$ on $\{1,2,\ldots, r\}$. As $r\ge 2$, we have that $(\{1,2,\ldots,r\},\Gamma)$ is a non-minimal equicontinuous factor of $(X,\Gamma)$, which is a contradiction. Thus $(X,\Gamma)$ is minimal.
The proof is complete.
\end{proof}

Now we are ready to show Theorem C.

\begin{proof}[Proof of Theorem C]
Let $(X,T)$ be minimal and $k\ge 2$. It suffices to show that if $(X,T^k)$ is minimal, then $N_d(X, T)=N_d(X, T^k)$ for each $d\in\N$.
This is an application of Theorem B  and some previous results.

Set $\mathcal{G}_d(T)=\langle\sigma_d(T), \tau_d(T)\rangle$.
Then $\G_d(T^k)$ is a subgroup of $\G_d(T)$ with finite index. Since $(X,T^k)$ is minimal, we have that $(X_{eq},T^k)$ is minimal. Thus, by Proposition \ref{t-n},
it follows that $N_d(X_{eq},T)=N_d(X_{eq},T^k)$ and $(N_d(X_{eq}, T), \G_d(T^k))$ is minimal. By Theorem \ref{thm-5.10},
$(N_d(X,T),\G_d(T^k))$ is minimal. Since $(X,T^k)$ is minimal, $(N_d(X,T^k), \mathcal{G}_d(T^k))$ is minimal.
It follows that $N_d(X, T)=N_d(X, T^k)$.
\end{proof}

\section{Proofs of Theorems D,E,F}

In this section we prove Theorems D,E and F. First we need a lemma.

\subsection{A key lemma}\
\medskip

To show Theorem D we need the following Lemma \ref{elibenjy}. Given a compact metric space $Z$ and a sequence of non-empty closed subsets $A_n\subset Z$, the
sets $ \liminf_{n\to\infty} A_n$ and $ \limsup_{n\to\infty} A_n$ are defined by:
$$\liminf_{n\to\infty} A_n=\{z\in Z: \exists z_n\in A_n, \ s.t.\  z=\lim_{n\to\infty} z_n\},$$
and
$$\limsup_{n\to\infty} A_n=\{z\in Z: \text{for some subsequence } \ \{n_i\}, \exists z_i\in A_{n_i}, \ s.t.\  z=\lim_{n\to\infty} z_i\}.$$
When $\displaystyle A:=\liminf_{n\to\infty} A_n=\limsup_{n\to\infty} A_n$, we write $\displaystyle  A=\lim_{n\to\infty} A_n$ and call $A$ the limit of the sequence $\{A_n\}_{n\in \N }$. In fact, in this case the set $A$ is the limit of the
sequence $\{A_n\}_{n\in \N }$ in the space $2^Z$, comprising the non-empty closed subsets of $Z$, with respect to
the Hausdorff metric.

\begin{lem}\label{elibenjy}
Let  $(X,T)$ be a t.d.s. Assume that for some $d\in \N$ and $n\in \N$,
$$N_{d+1}(X, T)=N_{d+1}(X, T^n),$$
then the subset
$$\Omega_d=\{x\in X: \overline{\O}(x^{(d)}, \tau_d(T))=\overline{\O}(x^{(d)}, \tau_d(T^n))\}$$
is a dense $G_\d$ subset of $X$, where $x^{(d)}=(x,x,\ldots,x)\in X^d, \tau_d(T)=T\times T^2\times \ldots\times T^d$.
\end{lem}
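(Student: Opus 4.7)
The plan is to realize $\Omega_d$ as a countable intersection of dense open sets and invoke Baire. First I would observe that $\tau_d(T^n) = \tau_d(T)^n$, whence $\overline{\O}(x^{(d)},\tau_d(T^n)) \subseteq \overline{\O}(x^{(d)},\tau_d(T))$ in general, and the larger orbit closure decomposes as the union, over $k = 0, 1, \ldots, n-1$, of the $\tau_d(T^n)$-orbit closures of $\tau_d(T)^k x^{(d)}$. Thus $x \in \Omega_d$ iff $\tau_d(T)^k x^{(d)}$ belongs to $\overline{\O}(x^{(d)},\tau_d(T^n))$ for each $k \in \{1,\dots,n-1\}$. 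Rewriting this condition as
$$\Omega_d^k = \bigcap_{\ell\geq 1}\bigcup_{m\in \Z}\Big\{x\in X: \rho_d\big(\tau_d(T^n)^m x^{(d)},\, \tau_d(T)^k x^{(d)}\big) < \tfrac{1}{\ell}\Big\},$$
continuity immediately shows each $\Omega_d^k$ is $G_\delta$, and $\Omega_d = \bigcap_{k=1}^{n-1}\Omega_d^k$.

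The density of each $\Omega_d^k$ is where the hypothesis $N_{d+1}(X,T)=N_{d+1}(X,T^n)$ enters. Fix $k$, a non-empty open $U\subseteq X$, $\varepsilon>0$, and any $x\in U$. Since $x^{(d+1)}\in \Delta_{d+1}(X)\subseteq N_{d+1}(X,T)$, $\tau_{d+1}(T)$-invariance yields $\tau_{d+1}(T)^k x^{(d+1)}\in N_{d+1}(X,T)= N_{d+1}(X,T^n) = \overline{\O}(\Delta_{d+1}(X),\tau_{d+1}(T^n))$, so for any prescribed $\delta>0$ I can find $x'\in X$ and $m\in \Z$ with
$$\rho\big(T^{jnm}x',\, T^{jk}x\big) < \delta \quad \text{for } j=1,2,\ldots,d+1.$$
I then set $y := T^{nm-k}x'$: the $j=1$ estimate together with uniform continuity of $T^{-k}$ on the compact space $X$ forces $y$ close to $x$, so $y\in U$ provided $\delta$ is small enough.

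The final step relies on the identities
$$T^{jnm}y = T^{-k}\bigl(T^{(j+1)nm}x'\bigr), \qquad T^{jk}y = T^{(j-1)k}\bigl(T^{nm}x'\bigr),$$
valid for $j=1,\ldots,d$: by uniform continuity of $T^{-k}$ the first is close to $T^{-k}T^{(j+1)k}x = T^{jk}x$, and by uniform continuity of $T^{(j-1)k}$ the second is close to $T^{(j-1)k}T^k x = T^{jk}x$. Controlling the finite family of moduli of continuity uniformly in $j \le d$ and taking $\delta$ sufficiently small, I obtain $\rho_d(\tau_d(T^n)^m y^{(d)}, \tau_d(T)^k y^{(d)}) < \varepsilon$ with $y \in U$; this proves density of each $\Omega_d^k$, and Baire completes the argument. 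The main technical point I expect to watch is the exponent bookkeeping when translating the $(d+1)$-tuple hypothesis into a $d$-tuple statement at the perturbed base point $y$: the crucial role of the extra coordinate is precisely that the first slot $T^{nm}x'\approx T^kx$ furnishes the shift producing $y$ close to $x$, while the remaining $d$ coordinates deliver the required approximate coincidence of $\tau_d(T^n)^m y^{(d)}$ with $\tau_d(T)^k y^{(d)}$.
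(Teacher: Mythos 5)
Your proof is correct, but it takes a genuinely different route from the paper's. The paper introduces the set-valued maps $C(x) = \{x\}\times\overline{\O}(x^{(d)},\tau_d(T))$ and $D(x) = N_{d+1}(T)\cap(\{x\}\times X^d)$, proves lower semi-continuity of $C$, and observes that at joint continuity points of $C_{\tau'_{d+1}(T)}$ and $C_{\tau'_{d+1}(T^n)}$ one has $C = D$ for both, so that $N_{d+1}(T)=N_{d+1}(T^n)$ forces the two $C$'s to agree there; the set of continuity points is a dense $G_\delta$ by general facts about semi-continuous maps. You instead decompose the $\tau_d(T)$-orbit closure into $n$ suborbit closures under $\tau_d(T^n)$, reduce $\Omega_d$ to the finite intersection of the explicitly $G_\delta$ sets $\Omega_d^k$, and prove density of the defining opens by an explicit perturbation: using $\tau_{d+1}(T)^k x^{(d+1)}\in N_{d+1}(T^n)$ to get an approximating pair $(x',m)$, shifting to $y = T^{nm-k}x'$, and controlling the $d$ coordinate errors by uniform continuity of the finitely many maps $T^{\pm jk}$. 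The extra $(d+1)$-st coordinate plays the same role in both arguments — it is what converts the $N_{d+1}$ hypothesis into information about $\tau_d$-orbit closures — but your bookkeeping is more elementary and makes the $G_\delta$ structure of the exact set $\Omega_d$ in the statement transparent, whereas the paper's argument produces the (a priori smaller) dense $G_\delta$ of joint continuity points inside $\Omega_d$. Both proofs are valid; the paper's is shorter once the semi-continuity machinery is in place, while yours is self-contained and requires no appeal to the theory of continuity points of set-valued maps.
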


\begin{proof}
Let $\tau'_{d+1}(T)={\rm id}\times T\times T^2\times \ldots \times T^d={\rm id}\times \tau_d(T)$.
Note that
$$N_{d+1}(T)=\overline{\O}(\D_{d+1}, \tau_{d+1}(T))=\overline{\O}(\D_{d+1}, \tau'_{d+1}(T)).$$
For $x\in X$, let
$$C(x)=\overline{\O}(x^{(d+1)}, \tau'_{d+1}(T))=\{x\}\times \overline{\O}(x^{(d)}, \tau_{d}(T))$$
$$D(x)=\{(x,u_1,\ldots, u_d): \exists x_i\in X, n_i\in \Z, (\tau'_{d+1}(T))^{n_i}(x_i^{(d+1)})\to (x,u_1,\ldots, u_d) \}.$$
Then it is clear that $C(x)\subset D(x)=N_{d+1}(T)\cap \left(\{x\}\times X^d\right)$.

\medskip

\noindent {\rm Claim 1}: {\em The map $C: X\rightarrow 2^{N_{d+1}(T)}, x\mapsto C(x)$ is lower-semi-continuous, that is, $x_i\to x , i\to\infty$ implies that $\displaystyle \liminf_{i\to\infty} C(x_i)\supset C(x)$.}

\medskip

In fact, by definition it follows from $x_i\to x$ and $x_i^{(d+1)}\in C(x_i)$ for all $i$ that
$$x^{(d+1)}\in \liminf_{i\to\infty} C(x_i).$$ Now for each $k\in \Z$, since $(\tau'_{d+1}(T))^k(x_i^{(d+1)})\in C(x_i)$ for all $i$, one has that
$$(\tau'_{d+1}(T))^k(x^{(d+1)})\in \liminf_{i\to\infty}C(x_i).$$
Thus
$${\O}(x^{(d+1)}, \tau'_{d+1}(T))\subset \liminf_{i\to\infty}C(x_i) .$$ It follows that
$$ \overline{\O}(x^{(d+1)}, \tau'_{d+1}(T))\subset \liminf_{i\to\infty}C(x_i) .$$
as claimed.

\medskip

The following claim is a direct consequence of the definition of $D$.

\medskip

\noindent {\rm Claim 2}: {\em For every $x\in X$,
$$D(x)\subset \bigcup\{\liminf_{i\to\infty}C(x_i): x_i\to x\}.$$}

Let $X_0\subset X$ be the set of $C$-continuity points. Then it is well known that $X_0$ is a
dense $G_d$ subset of $X$ \cite{C}. Now for $x_0\in X_0$, we have that for every
sequence $x_i \to x_0, i\to\infty$, $\displaystyle \liminf_{i\to\infty}C(x_i)=\lim_{i\to\infty}C(x_i)=C(x_0)$.
It follows that $D(x_0)\subset C(x_0)\subset D(x_0)$, whence
\begin{equation}\label{gw1}
  D(x_0)=C(x_0).
\end{equation}
In the following, we will consider both $\tau'_{d+1}(T)$ and $\tau'_{d+1}(T^n)$, and the symbols
$C_{\tau'_{d+1}(T)},$ $D_{\tau'_{d+1}(T)},$ $D_{\tau'_{d+1}(T^n)},$ and $C_{\tau'_{d+1}(T^n)}$ are clear.

\medskip

By assumption
\begin{equation*}
\begin{split}
  N_{d+1}(T) &= \bigcup_{x\in X}N_{d+1}(T)\cap \left( \{x\}\times X^d\right)\\
    & = \bigcup_{x\in X} D_{\tau'_{d+1}(T)}(x)=\bigcup_{x\in X} D_{\tau'_{d+1}(T^n)}(x)=N_{d+1}(T^n)
\end{split}
\end{equation*}
and therefore $D_{\tau'_{d+1}(T)}(x)=D_{\tau'_{d+1}(T^n)}(x)$ for all $x\in X$.
Now let $\Omega_d$ be the intersection of the sets of continuity points of the maps $C_{\tau'_{d+1}(T)}$ and  $C_{\tau'_{d+1}(T^n)}$. Then by (\ref{gw1}) for each $x\in \Omega_d$,
$$ C_{\tau'_{d+1}(T)}(x)=D_{\tau'_{d+1}(T)}(x)=D_{\tau'_{d+1}(T^n)}(x)=C_{\tau'_{d+1}(T^n)}(x). $$
Let $\pi: X^{d+1}\rightarrow X^d$ be projection on the coordinates $\{2,3,\ldots,d+1\}$, and we have that
$$\overline{\O}(x^{(d)}, \tau_d(T))=\pi C_{\tau'_{d+1}(T)}.$$
Hence for each $x\in \Omega_d$,
$$\overline{\O}(x^{(d)}, \tau_d(T))=\pi C_{\tau'_{d+1}(T)}=\pi C_{\tau'_{d+1}(T^n)}=\overline{\O}(x^{(d)}, \tau_d(T^n)).$$
The proof is completed.
\end{proof}

By Lemma \ref{elibenjy}  we have

\begin{thm}\label{equivalence-all} Let $(X,T)$ be a minimal t.d.s. and $n, d\ge 2$. Then the following statements are equivalent
\begin{enumerate}
\item $N_{d+1}(X, T)=N_{d+1}(X, T^n)$.

\item There is a dense $G_\delta$ subset $X_0$ of $X$ such that for any $l\in \Z$ and $x\in X_0$, there is a sequence $\{q_i\}$ of $\Z$
with
$$T^{nq_i}x\lra T^lx, T^{2nq_i}x\lra T^{2l}x, \ldots, T^{dnq_i}x\lra T^{dl}x.$$

\item $\Big\{x\in X: \overline{\O}(x^{(d)}, \tau_d(T))=\overline{\O}(x^{(d)}, \tau_d(T^n))\Big\}$
is a dense $G_\d$ subset of $X$.
\end{enumerate}
\end{thm}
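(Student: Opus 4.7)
\medskip

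My plan is to prove the cyclic chain of implications $(1)\Rightarrow (3)\Rightarrow (2)\Rightarrow (1)$, exploiting Lemma \ref{elibenjy} for the first step, a direct translation for the second, and a density/continuity argument for the third.

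The implication $(1)\Rightarrow (3)$ is immediate: it is exactly the conclusion of Lemma \ref{elibenjy}. For $(3)\Rightarrow (2)$, let $X_0$ be the dense $G_\delta$ set in $(3)$, and fix $x\in X_0$ and $l\in\Z$. Since $\tau_d(T)^l(x^{(d)})=(T^lx,T^{2l}x,\ldots,T^{dl}x)$ belongs to $\overline{\O}(x^{(d)},\tau_d(T))=\overline{\O}(x^{(d)},\tau_d(T^n))$, we can find $\{q_i\}\subset\Z$ with $\tau_d(T^n)^{q_i}(x^{(d)})\to (T^lx,T^{2l}x,\ldots,T^{dl}x)$, which reads coordinatewise as $T^{jnq_i}x\to T^{jl}x$ for $j=1,\dots,d$; this is precisely $(2)$.

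The main step is $(2)\Rightarrow (1)$. The inclusion $N_{d+1}(X,T^n)\subset N_{d+1}(X,T)$ is trivial since $\tau'_{d+1}(T^n)=\tau'_{d+1}(T)^n$. For the reverse inclusion, I first note that $(2)$ says, for each $x\in X_0$ and each $l\in\Z$,
\[
(x,T^lx,T^{2l}x,\ldots,T^{dl}x)=\lim_i\bigl(x,T^{nq_i}x,T^{2nq_i}x,\ldots,T^{dnq_i}x\bigr)\in N_{d+1}(X,T^n),
\]
since the approximating points lie in $\overline{\O}(x^{(d+1)},\tau'_{d+1}(T^n))\subset N_{d+1}(X,T^n)$. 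Taking closures in the $l$ variable yields $\overline{\O}(x^{(d+1)},\tau'_{d+1}(T))\subset N_{d+1}(X,T^n)$ for every $x\in X_0$. Now since $X_0$ is dense in $X$ and the map $x\mapsto \O(x^{(d+1)},\tau'_{d+1}(T))$ is ``continuous enough'' (each finite orbit point $\tau'_{d+1}(T)^k(x^{(d+1)})$ depends continuously on $x$), the set $\bigcup_{x\in X_0}\O(x^{(d+1)},\tau'_{d+1}(T))$ is dense in $\O(\Delta_{d+1}(X),\tau'_{d+1}(T))$. Taking closure on both sides gives
\[
N_{d+1}(X,T)=\overline{\O}(\Delta_{d+1}(X),\tau'_{d+1}(T))\subset N_{d+1}(X,T^n),
\]
which establishes $(1)$.

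The only delicate point is the final density step in $(2)\Rightarrow (1)$: one has to check that the orbits under $\tau'_{d+1}(T)$ starting from the diagonal $\Delta_{d+1}(X_0)$ approximate every orbit starting from a general diagonal point $y^{(d+1)}$. This is routine from the joint continuity of $T$: given $y\in X$, pick $x_i\in X_0$ with $x_i\to y$, and then for any $k\in\Z$, $\tau'_{d+1}(T)^k(x_i^{(d+1)})\to\tau'_{d+1}(T)^k(y^{(d+1)})$ coordinatewise. No further obstacle appears, so the chain closes.
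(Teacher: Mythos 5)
Your proof is correct and matches the paper's intent: the paper simply says the theorem follows from Lemma \ref{elibenjy}, which is exactly your $(1)\Rightarrow(3)$, and the remaining implications $(3)\Rightarrow(2)$ (reading membership in $\overline{\O}(x^{(d)},\tau_d(T^n))$ coordinatewise) and $(2)\Rightarrow(1)$ (using $\tau'_{d+1}(T^n)=\tau'_{d+1}(T)^n$, that $N_{d+1}(X,T^n)$ is closed, and density of $X_0$) are the routine steps the paper leaves implicit. The density step at the end of $(2)\Rightarrow(1)$ is handled correctly: for fixed $k$ the map $y\mapsto\tau'_{d+1}(T)^k(y^{(d+1)})$ is continuous, so every point of $\O(\Delta_{d+1}(X),\tau'_{d+1}(T))$ lies in the closed set $N_{d+1}(X,T^n)$, and closing up gives $N_{d+1}(X,T)\subset N_{d+1}(X,T^n)$.
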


\subsection{Proof of Theorem D}

\begin{proof}[Proof of Theorem D]
Let $(X,T^k)$ be minimal for some $k\ge 2$ and $d\in\N$. We show that for any $d\in \N$ and any $0\le j<k$ there is a sequence
$\{n_i\}$ with $n_i\equiv j\ (\text{mod}\ k)$ such that $T^{n_i}x\rightarrow x, T^{2n_i}x\rightarrow x, \ldots, T^{dn_i}x\rightarrow x,$ for a dense $G_\d$ subset of $X$. It is equivalent to show that
for any non-empty open subset $U$ of $X$ and $0\le j<k$
one has
\begin{equation}\label{Fur-linear-topo-finer}
U\cap T^{-n}U\cap \ldots \cap T^{-dn}U\not=\emptyset,
\end{equation}
for some $n\equiv j\ (\text{mod}\ k)$.

\medskip

By Theorem C and Lemma \ref{elibenjy}, there is a dense $G_\delta$ subset $X_0$ such that for any $x\in X_0$,
$$\{x\in X: \overline{\O}(x^{(d)}, \tau_d)=\overline{\O}(x^{(d)}, \tau_d^k)\}$$

Now assume that $0\le j<k$. Note that
$$(T^{-j}x,T^{-2j}x,\ldots, T^{-dj}x)\in \overline{\O}(x^{(d)}, \tau_d)$$ Thus,
for any non-empty open subset $U$ of $X$, there are $x\in U\cap X_0$ and $n_i\to \infty$ such that
$$T^{kn_i}x\lra T^{-j}x, T^{2kn_i}x\lra T^{-2j}x, \ldots, T^{dkn_i}x\lra T^{-dj}x,$$ i.e.
$$T^{kn_i+j}x\lra x,\ T^{2kn_i+2j}x\lra x, \ldots, T^{dkn_i+dj}x\lra x.$$ Set $n=kn_i+j$ when $i$ larger enough.
We then have
$$U\cap T^{-n}U\cap T^{-2n}U\cap \ldots \cap T^{-dn}U\not=\emptyset.$$
The proof is complete.
\end{proof}

\subsection{Proof of Theorem E}

\begin{proof}[Proof of Theorem E]
Let $U,V$ be non-empty open subsets of $X$. We are going to show that there is $n\in\N$ such that
$$U\cap T^{-P(n)}V\not=\emptyset.$$

Since $(X,T)$ is minimal, there is $N\in\N$ such that
$$X=\bigcup _{i=1}^N T^iU.$$
Let $q(n)=an^2+bn$.
By Bergelson-Leibman Theorem \cite{BL96} there are $n\in\N$ and $x\in V$ such that
$$T^{q(n)}x\in V,\  T^{q(2n)}x\in V, \ \ldots, \ T^{q(Nn)}x\in V.$$
Thus there is an open neighborhood $V_1$ of $x$ such that $V_1\subset V$ with
$$T^{q(n)}V_1\subset V,\ T^{q(2n)}V_1\subset V, \ldots, T^{q(Nn)}V_1\subset V.$$
By Theorem C there are $k_i,l_i\in\Z$  such that
$$(T^{2an}\times \ldots\times T^{2aNn})^{k_i}(T^{2an}\times \ldots\times T^{2an})^{l_i}(x,\ldots,x)\lra (Tx,\ldots, T^Nx)\in TV_1\times \ldots\times T^NV_1.$$
This implies that there are $k,l\in\Z$ such that
$$T^{2ank+2aln-1}x \in V_1,\ T^{4ank+2anl-2}x\in V_1,\ \ldots, T^{2aNnk+2anl-N}x\in V_1,$$
i.e.
$$T^{q(n)+2ank+2aln-1}x\in V,\ T^{q(2n)+4ank+2anl-2}x\in V, \ldots, T^{q(Nn)+2aNnk+2anl-N}x\in V.$$
For $j\in \{1,2,\ldots, N\}$,
\begin{equation*}
  \begin{split}
      & \quad q(jn)+2jank+2anl-j \\
       & = a(jn)^2+b(jn)+ 2jank+2anl-j\\
       & = a(jn+k)^2+b(jn+k)+c-(ak^2+bk+c)+2aln-j\\
       &= P(jn+k)-P(k)+2aln-j.
   \end{split}
\end{equation*}
Let
$$y=T^{-P(k)+2aln}x.$$
Then we have that
$$T^{P(jn+k)}(T^{-j}y)\in V, \ \forall j\in \{1,2,\ldots, N\}.$$
Since $X=\bigcup _{i=1}^N T^iU$, there is some $j_0\in \{1,2,\ldots, N\}$ such that
$T^{-j_0}y\in U$. Thus
$$T^{-j_0}y\in U\cap T^{-P(j_0n+k)}V.$$
In particular, $ U\cap T^{-P(j_0n+k)}V\neq \emptyset$. Then a standard argument by considering base of the topology of $X$
and taking the intersection yields the conclusion of the theorem.
The proof is complete.
\end{proof}

As a corollary we have

\begin{cor}\label{2-poly}Let $(X,T)$ be a totally minimal system, $k\ge 2$ and $0\le j<k$.
Let $P(n)=an^2+bn+c, a,b,c\in \Z, a\neq 0$ be an integral polynomials.
Then there is a dense $G_\delta$-subset $\Omega$ such that
for any $x\in \Omega$, $T^{P(n_i)}(x)\lra x$ for some sequence $\{n_i\}$ with $n_i\equiv j\ (\text{mod}\ k)$.
\end{cor}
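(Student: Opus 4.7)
The plan is to reduce this corollary directly to Theorem E applied to the auxiliary system $(X, T^k)$. The key observation is that substituting the arithmetic progression $n = km+j$ into the quadratic $P$ produces another admissible quadratic, but expressed in powers of $T^k$.

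First, I would note that total minimality of $(X,T)$ immediately yields total minimality of $(X, T^k)$: indeed $(X, (T^k)^m) = (X, T^{km})$ is minimal for every $m \in \N$. Next, I would perform the algebraic substitution $n = km+j$ in $P(n) = an^2 + bn + c$, obtaining
\begin{equation*}
P(km+j) = k R(m) + P(j), \quad \text{where } R(m) = akm^2 + (2aj+b)m.
\end{equation*}
Since $a \neq 0$ and $k \geq 2$, the polynomial $R$ is an integral polynomial of degree exactly $2$ with nonzero leading coefficient $ak$. Consequently, for every $x \in X$ and every $m \in \Z$,
\begin{equation*}
T^{P(km+j)} x = T^{P(j)} \cdot (T^k)^{R(m)} x.
\end{equation*}

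Now I would apply Theorem E to the totally minimal system $(X, T^k)$ together with the quadratic $R$: this produces a dense $G_\delta$ subset $\Omega \subseteq X$ such that for every $x \in \Omega$, the set $\{(T^k)^{R(m)} x : m \in \Z\} = \{T^{k R(m)} x : m \in \Z\}$ is dense in $X$. Fix $x \in \Omega$. Choose a sequence $\{m_i\} \subset \Z$ with $T^{k R(m_i)} x \to T^{-P(j)} x$, and set $n_i = k m_i + j$. Then $n_i \equiv j \pmod{k}$ and
\begin{equation*}
T^{P(n_i)} x = T^{P(j)} \cdot T^{k R(m_i)} x \longrightarrow T^{P(j)} \cdot T^{-P(j)} x = x,
\end{equation*}
which is exactly the conclusion of the corollary.

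Strictly speaking there is no real obstacle once Theorem E is in hand: the corollary is a direct algebraic repackaging, with total minimality ensuring that the hypothesis of Theorem E still holds after passing to the power $T^k$. The only point worth double-checking is that $R$ remains a non-constant quadratic (which fails only when $a = 0$, excluded by hypothesis, or $k = 0$, excluded since $k \geq 2$), and that the target point $T^{-P(j)}x$ is indeed reached along the dense orbit of $T^{kR(\cdot)}x$, which is immediate from Theorem E.
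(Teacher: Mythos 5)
Your proof is correct and uses essentially the same idea as the paper: the paper simply sets $Q(n) = P(kn+j)$ and applies Theorem E to $(X,T)$ with the quadratic $Q$ (whose leading coefficient is $ak^2 \neq 0$), reading off the conclusion immediately; you perform the same substitution $n = km+j$ but choose to absorb the factor $k$ into the transformation, passing to the power system $(X,T^k)$ with the quadratic $R$ and correcting by the constant $T^{P(j)}$. The two variants are interchangeable and both are valid.
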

\begin{proof}By putting $Q(n)=P(kn+j)$ and using Theorem E we get that there is a
sequence $\{n_i\}_1^\infty$ of $\N$  with $n_i\equiv j\ (\text{mod}\ k)$ such that
$$T^{P(n_i)}x\lra x$$ for $x$ in a dense $G_\delta$ set.
\end{proof}

\subsection{Proof of Theorem F}

\begin{proof}[Proof of Theorem F]
Let $(X,T)$ be a minimal system which is an open extension
of its maximal distal factor. We will prove that for any $d\in\N$, $\AP^{[d]}=\RP^{[d]}$. Since $\AP^{[d]}\subset \RP^{[d]}$, it suffices to show that $\RP^{[d]}\subset \AP^{[d]}$.

Let $d\in \N$ and $\pi_d:X\lra X_d$ be the factor to $X_d$. Since $(X,T)$ is an open extension
of its maximal distal factor, $\pi_d$ is open.  By Theorem \ref{main-distal},
there is a dense $G_\delta$ set $\Omega$ such that for each $x\in \Omega$,
$\overline{\O}(x^{(d+1)},\tau_{d+1})$ is $\pi_d^{(d+1)}$-saturated.

Let $x\in \Omega$ and $(x,y)\in \RP^{[d]}$. Then for each neighborhood $U$ of $y$,
there is $n\in\Z$ such that $T^{jn}x\in U$ for each $1\le j\le d+1$, which implies that
$(x,y)\in \AP^{[d]}$ by taking $x'=x$ and $y'=T^nx$ in the definition of $\AP^{[d]}$.

Now let $(x,y)\in \RP^{[d]}$. In each neighborhood $W$ of $(x,y)$, there are $(x',y')\in W$
with $x'\in \Omega$ and $(x',y')\in \RP^{[d]}$ by the openness of $\pi_d$. By what we just
proved, $(x',y')\in \AP^{[d]}$ which implies that $(x,y)\in \AP^{[d]}$ since $\AP^{[d]}$ is closed.
This ends the proof.
\end{proof}



\section{Some conjectures}

With the help of Theorem A and its consequences we
were able to
answer several open questions.
In fact, Theorem A also opens a window for the possibility to explore other natural questions, which
we will discuss now.

\begin{conj}\label{c-1-nil-1}
Let $(X,T)$ be a minimal nilsystem $d\in\N$. Then for each $k\ge 2$ the maximal $k$-step nilfactor factor of $(N_d(X), \G_d)$ is $(N_d(X_k), \G_d)$. Moreover, there is a dense $G_\delta$ set $\Omega$ such that for each $x\in \Omega$, the maximal $k$-step pro-nilfactor of $\overline{\O}(x^{(d)},\tau_d)$ is $\overline{\O}(\pi_\infty x^{(d)},\tau_d)$.
\end{conj}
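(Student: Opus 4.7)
The plan is to prove the first assertion by analyzing the nilpotent Lie-group structure of $N_d(X)$ and then derive the moreover statement by a $G_\delta$ continuity-point argument modelled on the proof of Theorem~\ref{c-1-1-distal}.

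Write $X=G/\Gamma$ as a $d'$-step nilsystem with translation $T(p)=\tau p$, so that $X^d=G^d/\Gamma^d$ is a $d'$-step nilmanifold on which $\G_d$ acts by translations from the nilpotent subgroup $H:=\langle(\tau,\ldots,\tau),(\tau,\tau^2,\ldots,\tau^d)\rangle\le G^d$. By standard nilmanifold theory, $N_d(X)=\overline{\O}(\Delta_d(X),\G_d)$ is a closed sub-nilmanifold of $X^d$ on which $\G_d$ acts minimally by translations from a closed subgroup $H^*\le G^d$. The factor map $\pi_k:X\to X_k=G/\Gamma G_{k+1}$ induces $\pi_k^{(d)}:N_d(X)\to N_d(X_k)$, and its target is a $k$-step pro-nilsystem under $\G_d$ since $(G/G_{k+1})^d$ is $k$-step nilpotent. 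By Theorem~\ref{lift}, the first half of the conjecture is equivalent to $R_{\pi_k^{(d)}}\subset\RP^{[k]}(N_d(X),\G_d)$, and by Corollary~\ref{cor-5.3} this reduces to exhibiting a single $x\in X$ with
\begin{equation*}
\{x^{(d)}\}\times\bigl(\pi_k^{-1}(\pi_k(x))\bigr)^d\subset\RP^{[k]}(N_d(X),\G_d). \qquad(\ast)
\end{equation*}

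To establish $(\ast)$, fix $x\in X$ and $(z_1,\ldots,z_d)\in(\pi_k^{-1}(\pi_k(x)))^d$; since $X$ is a nilsystem, $z_i=g_ix$ for some $g_i\in G_{k+1}$. Following the template of the proof of Theorem~\ref{c-1-1}, one takes test elements $h_i=\tau_d^{c_in}\in\G_d$ with integers $c_i$ chosen so that the sums $\sum c_i\varepsilon_i$ over $\vec\varepsilon\in\{0,1\}^k$ are all distinct, together with $P'=\tau_d^n(x^{(d)})$ and $Q'=\tau_d^{(1+C)n}(x^{(d)})$ for $C>\sum c_i$. The task is to find a single $n$ so that, in each of the $d$ coordinates, the $2^k$ iterates along the $\vec\varepsilon$-patterns simultaneously $\eta$-approximate either $x$ or the corresponding $z_i$. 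This is exactly where the nil-Bohr characterization of $\RP^{[k]}$ from Theorem~\ref{several} enters: since $(x,z_i)\in R_{\pi_k}=\RP^{[k]}(X,T)$, the return-time set $N_T(x,B_\eta(z_i))$ lies in $\F_k^*$, and intersecting such return-time sets across the $d\cdot 2^k$ target positions (using the known closure of $\F_k^*$ under intersection with nil-Bohr sets in the pro-nilsystem setting) produces the required $n$.

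For the moreover statement, mirror the proof of Theorem~\ref{c-1-1-distal}. Let $\Omega$ be the dense $G_\delta$ set of continuity points of the map $x\mapsto\overline{\O}(x^{(d)},\tau_d)$. For $x\in\Omega$, the $\tau_d$-orbit closure sits inside $N_d(X)$ as a minimal $\tau_d$-subsystem, and the same nil-Bohr approximation, now performed inside this orbit closure, transfers the $(\ast)$-style saturation from the $\G_d$-action on $N_d(X)$ to the $\tau_d$-action on $\overline{\O}(x^{(d)},\tau_d)$, identifying its maximal $k$-step pro-nilfactor with $\overline{\O}((\pi_\infty x)^{(d)},\tau_d)$.

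The main obstacle is the simultaneous nil-Bohr approximation underlying $(\ast)$. Algebraically this demands the equality $H^*_{k+1}=H^*\cap G_{k+1}^d$, that is, the $(k+1)$-th commutator subgroup of the Lie group acting on $N_d(X)$ must exhaust the full intersection with the kernel of $\pi_k^{(d)}$. The subtlety is that the familiar Host--Kra cube structure $Q^{[d]}(X)$ is indexed by $\{0,1\}^d$, whereas the $\tau_d$-action generates only a one-parameter family of translations, so verifying the commutator equality requires a bespoke calculation in a ``$\tau_d$-adapted'' cube structure. Carrying out this commutator computation---equivalently, showing that the nil-Bohr return-time sets along $\tau_d$ are rich enough to witness $\RP^{[k]}$ on all $d$ coordinates simultaneously---is the crux of the conjecture.
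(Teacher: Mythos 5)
The statement you have been asked to prove is \emph{Conjecture~\ref{c-1-nil-1}}, one of the open problems listed in Section~7 of the paper. The authors offer no proof; on the contrary, they explicitly signal that resolving it ``needs a real work'', and they present the subsequent Conjecture~\ref{f-coj-1} as a \emph{conditional} consequence of Theorems~\ref{c-1-1}, \ref{c-1-1-distal} \emph{and} of this still-unproved Conjecture~\ref{c-1-nil-1}. So there is no internal proof to compare your attempt against.

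Your write-up is, by your own account, not a proof either. You correctly assemble the known reductions: (i) via Theorem~\ref{lift} and Corollary~\ref{cor-5.3}, the first assertion is equivalent to exhibiting one $x\in X$ with $\{x^{(d)}\}\times\bigl(\pi_k^{-1}(\pi_k(x))\bigr)^d\subset\RP^{[k]}(N_d(X),\G_d)$; (ii) $R_{\pi_k}=\RP^{[k]}(X,T)$ gives, via Theorem~\ref{several}, that the return-time sets $N_T(x,B_\eta(z_i))$ lie in $\F_k^*$; and (iii) algebraically this amounts to the commutator identity $H^*_{k+1}=H^*\cap G_{k+1}^d$ for the subgroup $H^*\leq G^d$ acting on $N_d(X)$. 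But you then stop, stating that carrying out the commutator computation, ``equivalently, showing that the nil-Bohr return-time sets along $\tau_d$ are rich enough to witness $\RP^{[k]}$ on all $d$ coordinates simultaneously, is the crux of the conjecture.'' That is precisely the unresolved content; everything preceding it is preparatory and matches the paper's own toolkit, but the decisive step is left open. Two additional cautions: you invoke a ``known closure of $\F_k^*$ under intersection with nil-Bohr sets'' without justification (the paper establishes $\F_{ts}$ as a filter, not $\F_k^*$, and obtaining a single $n$ that works for all $d\cdot 2^k$ positions at once is exactly where a genuine argument is needed); and the ``moreover'' clause cannot simply be mirrored from Theorem~\ref{c-1-1-distal}, since that theorem only identifies the $k$-step pro-nilfactor of $\overline{\O}(x^{(d)},\tau_d)$ with that of $\overline{\O}((\pi_\infty x)^{(d)},\tau_d)$ as abstract systems, whereas the conjecture asserts the concrete factor map onto $\overline{\O}((\pi_\infty x)^{(d)},\tau_d)$ realizes it -- a strictly stronger claim that would again require the unresolved nilmanifold computation inside each orbit closure.
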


We remark that in \cite{MR} Moreira and Richter showed among other things that if $X$ is a connected nilsystem, then for a.e. $x$
the Kronecker factor of $\overline{\O}(x^{(d)},\tau_d)$ is isomorphic to that of $X$. Assuming Conjecture \ref{c-1-nil-1} we have
\begin{conj}\label{f-coj-1}
Let $(X,T)$ be minimal and $d,k\in\N$. Then for each $d\ge 1$
the maximal $k$-step nilfactor factor of $(N_d(X), \G_d)$
is $(N_d(X_k), \G_d)$. Moreover, 
there is a dense $G_\delta$ set $\Omega$ such that for each $x\in \Omega$,
the maximal $k$-step pro-nilfactor of $\overline{\O}(x^{(d)},\tau_d)$ is
$\overline{\O}(\pi_\infty x^{(d)},\tau_d)$.
\end{conj}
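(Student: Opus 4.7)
The plan is to reduce the conjecture to a structural statement about sub-nilmanifolds and then to settle it by a nilpotent-group computation. First, by Theorem \ref{c-1-1} the maximal $k$-step pro-nilfactor of $(N_d(X),\G_d)$ coincides with that of $(N_d(X_\infty),\G_d)$, so without loss of generality $X=X_\infty$ is a pro-nilsystem. Writing $X=\varprojlim X^{(m)}$ as an inverse limit of nilsystems and observing that the functors $N_d(\cdot)$, $(\cdot)_k$, and the passage to the maximal $k$-step pro-nilfactor all commute with inverse limits of minimal systems, it suffices to prove the claim when $X=G/\Gamma$ is a single $D$-step nilsystem ($D\ge k$) with translation by some $\tau\in G$.

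In this setting, Leibman's theorem on polynomial orbits in nilmanifolds realises $N_d(X)$ as a closed sub-nilmanifold $H/\Lambda$ of $X^d=G^d/\Gamma^d$, where $H\le G^d$ is the closed subgroup generated by $\Delta_d(G)$ together with $\tau_d=(\tau,\tau^2,\ldots,\tau^d)$, and $\Lambda=H\cap\Gamma^d$. The $\G_d$-action on $N_d(X)$ is then by translations in $H$, so the maximal $k$-step pro-nilfactor of $(N_d(X),\G_d)$ is $H/H_{k+1}\Lambda$, where $H_{k+1}$ is the $(k{+}1)$-st term of the lower central series of $H$. Meanwhile $X_k=G/G_{k+1}\Gamma$, and the image of $H$ under $G^d\twoheadrightarrow(G/G_{k+1}\Gamma)^d$ is precisely $N_d(X_k)\cong H/\bigl(H\cap(G_{k+1})^d\Gamma^d\bigr)$. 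The first half of the conjecture therefore reduces to the identity
\[
H_{k+1}\cdot\Lambda \;=\; H\cap\bigl((G_{k+1})^d\cdot\Gamma^d\bigr),
\]
of which the inclusion $\subseteq$ is automatic from $H_{k+1}\subseteq(G^d)_{k+1}=(G_{k+1})^d$.

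The hard part will be the reverse inclusion: any $h\in H$ that decomposes as a product of an element of $(G_{k+1})^d$ with an element of $\Gamma^d$ must already be rewriteable as a product of an element of $H_{k+1}$ with an element of $\Lambda$. The strategy I would pursue is to exploit Leibman's explicit description of $H$: $H$ is generated by $\Delta_d(G)$ and the cyclic subgroup $\overline{\langle\tau_d\rangle}$, so its commutators are built from $\Delta_d(G_2)$ and mixed brackets $[\Delta_d(g),\tau_d^n]$. Inductively on $k$, one shows that every $(k{+}1)$-fold commutator in $G^d$ that happens to lie in $H$ can be rewritten modulo $\Lambda$ as a $(k{+}1)$-fold commutator built from the generators of $H$; this should follow from the Host--Kra cube-group description of $X$ at level $k+1$, projected onto the first $d$ cube coordinates, combined with the observation that $\tau_d^n$ is a polynomial sequence of degree $d$ in $n$ whose $(k{+}1)$-st finite-difference commutators already live in $\Delta_d((G)_{k+1})\cdot\Lambda$.

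Finally, the ``moreover'' assertion is obtained by localisation. Theorem \ref{c-1-1-distal} supplies a dense $G_\delta$ set $\Omega$ on which the maximal $k$-step pro-nilfactor of $\overline{\O}(x^{(d)},\tau_d)$ coincides with that of $\overline{\O}((\pi_\infty x)^{(d)},\tau_d)$. Since the latter orbit closure is itself a sub-nilmanifold of $X_\infty^d$ by Leibman's theorem, running the same structural computation on this smaller system (with $H$ replaced by the subgroup of $G^d$ stabilising the fibre through $(\pi_\infty x)^{(d)}$) identifies its maximal $k$-step pro-nilfactor with $\overline{\O}((\pi_k x)^{(d)},\tau_d)$, as required.
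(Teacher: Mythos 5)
Your reductions are exactly the ones the paper uses: Theorem \ref{c-1-1} transfers the first statement from $(X,T)$ to $(X_\infty,T)$, and Theorem \ref{c-1-1-distal} does the same for the ``moreover'' part. But you should be aware that the paper's own ``proof'' of this conjecture is explicitly \emph{conditional}: it reads ``this follows by Theorems \ref{c-1-1}, \ref{c-1-1-distal}, and Conjecture \ref{c-1-nil-1},'' and Conjecture \ref{c-1-nil-1} --- the statement for a minimal nilsystem --- is left open. What you have done is follow the same two reduction steps and then \emph{attempt} to fill the gap that the paper deliberately leaves as an open problem; but the attempt is not complete, so your proposal does not actually close the argument.

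Concretely, after invoking Leibman's theorem to write $N_d(X)=H/\Lambda$ as a sub-nilmanifold of $G^d/\Gamma^d$ with $H=\overline{\langle \Delta_d(G),\tau_d\rangle}$, you correctly reduce the first claim to the group identity
\[
H_{k+1}\cdot\Lambda \;=\; H\cap\bigl((G_{k+1})^d\cdot\Gamma^d\bigr),
\]
and you note that $\subseteq$ is automatic. For $\supseteq$ you write ``the strategy I would pursue is to exploit Leibman's explicit description of $H$ \dots this should follow from the Host--Kra cube-group description \dots.'' This is a plan, not an argument: no rewriting of a $(k{+}1)$-fold commutator of $G^d$ lying in $H$ as an element of $H_{k+1}\cdot\Lambda$ is actually carried out, and it is far from obvious that such a rewriting exists (for instance, it is not immediate that an element of $\Gamma^d$ that happens to fall in $H$ can be absorbed into $\Lambda$, since $\Lambda=H\cap\Gamma^d$ may be a small lattice inside $H$ while the ambient $\Gamma^d$ is much larger). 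Precisely this inclusion is the content of Conjecture \ref{c-1-nil-1}, which the paper does not prove. Likewise, in the ``moreover'' part you assert that ``running the same structural computation on this smaller system'' identifies the factor, but that computation was never completed. The honest outcome is: your reduction to the nilsystem case is valid and mirrors the paper, but the central step you need --- the nilpotent-group identity --- is unproved, and the paper itself records it as an open conjecture rather than a theorem.
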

\begin{proof}
This
follows by Theorems \ref{c-1-1},  \ref{c-1-1-distal}, and Conjecture \ref{c-1-nil-1}
\end{proof}


\begin{conj} Let $(X,T)$ be a totally minimal system, and $P(n)$ be a non-constant integral polynomial.
Then there is a dense $G_\delta$ subset $\Omega$ of $X$ such that
for every $x\in \Omega$, the set $\{T^{P(n)}(x):n\in\Z\}$ is dense in $X$.
\end{conj}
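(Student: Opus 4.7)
The plan is to extend the strategy of Theorem E (which treats $\deg P = 2$) to arbitrary degree by combining multi-variable polynomial Bergelson-Leibman with a reduction to the $\infty$-step pro-nilfactor via Theorem A. Write $P(n) = a_d n^d + \dots + a_1 n + a_0$ with $a_d \neq 0$, fix non-empty open $U, V \subset X$, and let $N$ be such that $X = \bigcup_{i=1}^N T^i U$. The goal is to produce $m \in \Z$ with $U \cap T^{-P(m)} V \neq \emptyset$, from which the full conclusion follows by a standard basis-and-$G_\delta$ argument.

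First I would apply the multi-variable polynomial Bergelson-Leibman theorem to the family $P_j(n, k) = P(jn + k) - P(k)$ for $j = 1, \ldots, N$ (each vanishing at the origin and non-constant since $P$ is). This yields $n, k \in \Z$, a point $x \in V$, and an open neighborhood $V_1 \ni x$ such that $T^{P(jn + k) - P(k)} V_1 \subset V$ for every $j \in \{1, \ldots, N\}$. Setting $y = T^{-P(k)} x$ gives $T^{P(jn + k)} y \in V$ for each such $j$. To conclude via the cover $X = \bigcup_i T^i U$ in the manner of Theorem E, one must also arrange the shift $T^{P(jn+k)}(T^{-j} y) \in V$. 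For $\deg P = 2$, the correction $P(jn+k) - P(k) - q(jn) = 2a k j n$ (with $q(n) = P(n) - P(0)$) is linear in $j$, and can be realized by an orbit in $N_N(X, T^{2an}) = N_N(X, T)$ via Theorem C. For $\deg P = d \geq 3$, however, a direct expansion gives the correction $\sum_{l=1}^{d-1} R_l(k)(jn)^l$ with $R_l \in \Z[k]$ of degree $d-l$, and its $j^l$-terms for $l \geq 2$ cannot be absorbed by the linear-in-$j$ structure of $N_N(X,T)$ on its own.

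The main obstacle is thus to perform a multi-scale linearization of these higher correction terms. A natural route is to leverage Theorem A: after passing to an almost one-to-one modification, the orbit closure of $x^{(d)}$ under $\tau_d$ is $\pi_\infty^{(d)}$-saturated for $x$ in a dense $G_\delta$ subset. On $X_\infty$, which is an inverse limit of minimal nilsystems, Leibman's equidistribution theorem for polynomial orbits in totally minimal nilsystems guarantees that $\{T^{P(n)} z : n \in \Z\}$ is dense in $X_\infty$ for every $z$, so the conjecture holds on the pro-nilfactor. The delicate step is then lifting this density from $X_\infty$ back to $X$, since the saturation granted by Theorem A controls the linear action $\tau_d$ rather than the polynomial orbit $\{T^{P(n)}\}$; the technical heart of the proof would lie in establishing a polynomial analog of this saturation, perhaps by developing polynomial versions of the Poincar\'e and Birkhoff recurrence characterizations of $\RP^{[d]}$ from \cite{HSY16}, or by an inductive argument that absorbs the $j^l$-correction terms one degree at a time through repeated applications of the Saturation Theorem (Theorem \ref{Thm-Glasner's-Lemma}) to successively refined fibre structures built from iterated pro-nilfactors of the minimal system $(N_d(X), \langle \sigma_d, \tau_d\rangle)$.
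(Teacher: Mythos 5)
This statement is Conjecture~3 in the paper, which the authors explicitly leave \emph{open}: the paper proves only the $\deg P = 2$ case (Theorem~E) and remarks that the general conjecture is known for minimal weakly mixing systems by a different route \cite{HSY-19}. There is therefore no proof in the paper to compare against, and, as your own final paragraph concedes, you have not produced one either.

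Your diagnosis of where the Theorem~E argument breaks for $\deg P \geq 3$ is accurate and the computation of the correction is right. Writing $q(n) = P(n) - P(0)$, one has
\[
P(jn+k) - P(k) - q(jn) \;=\; \sum_{l=1}^{d-1}\frac{P^{(l)}(k) - P^{(l)}(0)}{l!}\,(jn)^l,
\]
which is genuinely non-linear in $j$ once $d \geq 3$, whereas the $j$-th coordinate of an $\langle \sigma_N, \tau_N(T^c)\rangle$-orbit in $N_N(X,T^c)$ has the form $T^{c(kj+l)}x$ — linear in $j$. The algebraic coincidence driving Theorem~E is that the degree-$2$ correction $2ankj$, the shift $-j$, and the diagonal freedom $2anl$ together are of exactly this linear form, so Theorem~C can absorb them in one stroke; there is no higher-degree analogue. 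Note also that passing to the multi-variable Bergelson--Leibman theorem applied to $P_j(n,k) = P(jn+k) - P(k)$ does not evade the issue: you still must post-compose with $T^{-j}$ in the $j$-th slot, and in doing so you give up the $l$-freedom that in Theorem~E handles the shift and the $k$-correction simultaneously.

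Where the proposal genuinely falls short is that, having correctly isolated the obstacle — Theorem~A's saturation controls the linear $\tau_d$-action, not polynomial orbits, and, as the paper itself emphasizes, topological dynamics has no van der Corput-type reduction from polynomial to linear averages — you then defer the essential step (lifting polynomial-orbit density from $X_\infty$ back to $X$) to unconstructed machinery: a ``polynomial analog of saturation,'' ``polynomial versions'' of the $\RP^{[d]}$ recurrence characterizations, or an inductive absorption of $j^l$-corrections degree by degree. None of these is built or verified, and there is no evidence in the sketch that any of them closes the gap. The observation that Leibman's equidistribution settles the conjecture on $X_\infty$ is correct and a reasonable starting point, but the return trip from the pro-nilfactor is precisely the hard content and is left entirely unresolved, which is exactly why the paper records this as a conjecture rather than a theorem.
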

Remark that in Theorem E we have shown that it is true if $P(n)=an^2+bn+c$. Moreover, Conjecture 3 holds for minimal weakly mixing systems \cite{HSY-19}.

\medskip
We think that it
maybe relatively
easy
to show that Theorem D holds for any finite collection of non-constant integral polynomials $P_i(n)$
under the total minimality assumption (see Corollary \ref{2-poly}).
But, we believe it needs a real work to
verify
the following conjecture.

\begin{conj} Let $(X,T^k)$ be minimal for some $k\ge 2$ and $d\in\N$. Then for non-constant integral polynomials $P_m(n)$ with $P_m(0)=0$,
$1\le m\le d$, and any $0\le j<k$, there is a sequence $\{n_i\}_{i\in \N}$ such that
\begin{equation}\label{Fur-linear-topo-finer}
T^{P_1(n_i)}x\lra x, \ldots, T^{P_d(n_i)}x\lra x, \ i\to \infty,
\end{equation}
where $n_i\equiv j\ (\text{mod}\ k)$ and $x$ is in a dense $G_\delta$ set of $X$.
\end{conj}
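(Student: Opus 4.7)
The plan is to follow the blueprint of Theorem D but with polynomial recurrence replacing linear recurrence. Writing $S=T^k$ and substituting $n=kn'+j$, the key arithmetic observation is that any integral polynomial $P_m$ with $P_m(0)=0$ satisfies $P_m(kn'+j)\equiv P_m(j) \pmod{k}$ for all $n'\in\Z$ (from Taylor expansion together with the fact that the quotient $(P_m(x)-P_m(j))/(x-j)$ lies in $\Z[x]$). Hence we may write
\begin{equation*}
P_m(kn'+j) = k\tilde Q_m(n') + P_m(j),
\end{equation*}
where $\tilde Q_m$ is an integer-valued polynomial vanishing at $0$. The conjecture becomes equivalent to the assertion that for $x$ in a dense $G_\delta$ subset of $X$ the diagonal point $(x,\ldots,x)$ belongs to the polynomial orbit closure $\overline{\O}\bigl((T^{P_1(j)}x,\ldots,T^{P_d(j)}x),\, S^{\tilde Q_1}\times\cdots\times S^{\tilde Q_d}\bigr)$ in the minimal system $(X,S)$.

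The next step is to develop a polynomial analog of Theorem A: after a canonical almost one-to-one modification $X^*\to X^*_\infty$ of $\pi_\infty$, the polynomial orbit closure $\overline{\O}(x^{(d)},\, T^{P_1}\times\cdots\times T^{P_d})$ should be $(\pi^*)^{(d)}$-saturated for $x$ in a dense $G_\delta$. This would reduce the problem to the pro-nilsystem case, where polynomial orbit closures are themselves subnilmanifolds and can be analyzed by the equidistribution results of Leibman and Green--Tao; the minimality of $(X,T^k)$ then translates into minimality of appropriate quotient nilmanifold actions, from which the containment of $(x,\ldots,x)$ would follow by direct computation. Finally, a polynomial variant of Lemma \ref{elibenjy} should pass from equality of the relevant polynomial $N_d$'s to a pointwise dense $G_\delta$ statement, completing the argument.

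The main obstacle is the polynomial analog of Lemma \ref{lem-Key}. The present proof uses crucially that $\langle\sigma_d,\tau_d\rangle = \langle\sigma_d, S_j\rangle$ for the canonical shears $S_j = \tau_d^{-1}\sigma_d^j$, and that each such pair acts minimally on $N_d(X)$ by Theorem \ref{thm-Glasner-1}. For polynomial $\tau_{\vec P}=T^{P_1}\times\cdots\times T^{P_d}$ the analogous shears involve polynomial differences $P_i-P_m$ rather than constants, so $\langle\sigma_d,\tau_{\vec P}\rangle$ does not decompose into simple shear subgroups whose minimal components can be described coordinate by coordinate. Moreover, the recurrence-set characterization of $\RP^{[d]}$ used in the proof (Theorem \ref{several}) would need to be replaced by a polynomial analog, presumably based on the polynomial Poincar\'e and Birkhoff sets of Frantzikinakis--Lesigne--Wierdl together with a suitable polynomial Nil$_d$-Bohr$_0$ notion. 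Both ingredients appear delicate, and establishing them is where I expect the real work to lie.
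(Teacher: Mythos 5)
You should be aware that this statement is not proved in the paper at all: it appears in the final section as an open conjecture, and the authors explicitly remark just before it that ``we believe it needs a real work to verify the following conjecture.'' So there is no proof in the paper to compare your sketch against, and your proposal should be read as a research program rather than a proof.

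That said, let me assess the program on its own terms. The opening reduction is correct and worth recording: since $P_m(0)=0$, the factor theorem in $\Z[x]$ gives $P_m(x)-P_m(j)=(x-j)Q_m(x)$ with $Q_m\in\Z[x]$, hence $P_m(kn'+j)-P_m(j)=kn'\,Q_m(kn'+j)=k\tilde Q_m(n')$ with $\tilde Q_m\in\Z[x]$ and $\tilde Q_m(0)=0$; so the conjecture does reduce, in the minimal $\Z$-system $(X,S)$ with $S=T^k$, to placing $(x,\ldots,x)$ in the closure of the polynomial orbit $\big(S^{\tilde Q_1(n')}T^{P_1(j)}x,\ldots,S^{\tilde Q_d(n')}T^{P_d(j)}x\big)$ for a residual set of $x$. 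This is exactly the kind of reformulation the paper carries out for Theorem D via Theorem C and Lemma~\ref{elibenjy}, so the shape of the argument is aligned with the paper's philosophy. But the two load-bearing steps you flag are precisely where the paper stops. A polynomial analog of Lemma~\ref{lem-Key} (and hence of Theorem A) is genuinely missing: the proof of Lemma~\ref{lem-Key} rests on the algebraic identity $\langle\sigma_d,\tau_d\rangle=\langle\sigma_d,S_j\rangle$ for the shears $S_j=\tau_d^{-1}\sigma_d^{j}$, and on Theorem~\ref{thm-Glasner-1} giving minimality of $(N_d(X),\langle\sigma_d,S_j\rangle)$; neither statement has a straightforward analog when $\tau_d$ is replaced by $T^{P_1}\times\cdots\times T^{P_d}$, because the group generated by $\sigma_d$ and a polynomial ``diagonal'' is not finitely generated by commuting shears in the same way, and the corresponding orbit closure $N_{\vec P}(X)$ is not a priori minimal under any natural subgroup. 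Likewise Theorem~\ref{several} (the $\RP^{[d]}$ characterization via $\F_{Bir_d}$ and Nil$_d$ Bohr$_0$-sets), which is invoked at the heart of the Claim in Lemma~\ref{lem-Key}, would need a polynomial replacement built on the Frantzikinakis--Lesigne--Wierdl polynomial recurrence sets, and no such statement is available in the paper. Until these two pieces are supplied, the reduction in your first paragraph is sound arithmetic but the rest is a plan, not a proof; you identify this yourself, and I agree with your diagnosis of where the real work lies.
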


The last conjecture is related to $\AP^{[d]}$.

\begin{conj} There is a minimal system $(X,T)$ with $\AP^{[2]}(X,T)=\Delta_X$ and at the same time, $(X,T)$ is not distal.
\end{conj}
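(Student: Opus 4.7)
The plan is to construct an explicit minimal non-distal system and verify $\AP^{[2]}=\Delta_X$ directly. A useful preliminary observation is that if a proximal pair $(x,y)$ is \emph{asymptotic} along a sequence $n_k\to\infty$, i.e.\ $\rho(T^{n_k}x,T^{n_k}y)\to 0$, then automatically $\rho(T^{2n_k}x,T^{2n_k}y)\to 0$ along the same sequence, so $(x,y)\in \AP^{[2]}$. Hence the standard Sturmian flow and similar systems whose proximal pairs are asymptotic are ruled out. The example must therefore have proximal pairs whose close-return times are sparse and arranged so that the $T^{2n}$-images separate for those same $n$.

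My approach would be through a carefully designed almost one-to-one extension $\pi:X\to Z$ of a rotation on a compact abelian group $Z$. A natural candidate is to take $Z$ to be an odometer (or a product of $p$-adic groups for a specifically chosen set of primes) and build $X$ as a Toeplitz-type symbolic extension with a hierarchical structure of periods $p_1\mid p_2\mid\cdots$. First, I would verify that the Toeplitz sequence is regular so that $(X,T)$ is minimal and uniquely ergodic, and that the extension is almost one-to-one but not one-to-one, which immediately gives proximal pairs and non-distality. The critical design constraint is that the hole-filling pattern at each hierarchical level must be chosen so that for any two points $x,y$ in the same fibre of $\pi$, whenever $n$ is close to $0$ in the odometer (so $T^nx$ and $T^ny$ are potentially close), the separate structure of the fibre causes $T^{2n}x$ and $T^{2n}y$ to be uniformly separated.

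The decisive step is establishing $\AP^{[2]}=\Delta_X$. Given a non-diagonal pair $(x,y)$, one must exhibit $\delta>0$ so that for every perturbation $(x',y')$ within distance $\delta$ and every $n\in\Z$, one cannot simultaneously have $\rho(T^nx',T^ny')<\delta$ and $\rho(T^{2n}x',T^{2n}y')<\delta$. The rigidity of the Toeplitz structure is what controls the perturbations: any $(x',y')$ close to $(x,y)$ agrees with $(x,y)$ on a long symbolic window, forcing the candidate close-return times $n$ to be near multiples of $p_k$ for appropriately large $k$. The construction of the filling pattern at level $k+1$ is then engineered inductively so that on those particular $n$, the action of $T^{2n}$ on the fibre separates $x'$ from $y'$ by a uniform amount.

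The main obstacle, and the heart of the argument, lies in this inductive combinatorial design: the ``arbitrarily close $x',y'$'' quantifier in the definition of $\AP^{[2]}$ is very strong, and at each stage one must block \emph{all} perturbations that could create a simultaneous $n$-and-$2n$ approach, without destroying the proximal pairs that guarantee non-distality. If the direct Toeplitz construction proves too rigid to accommodate both requirements, a softer variant would be to replace the Toeplitz extension by a group-valued cocycle over the odometer (e.g.\ taking values in a finite non-abelian group) and exploit the non-commutativity to make the $n$- and $2n$-return times structurally incompatible; this would still yield a minimal almost one-to-one extension in which the rigidity required for $\AP^{[2]}=\Delta_X$ can be imposed coordinate by coordinate in the inverse-limit construction.
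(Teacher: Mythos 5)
This statement is left as an open conjecture in the paper (Conjecture 4 of the final section); the authors offer no construction, so there is nothing in the paper to compare your attempt against. Your proposal is a plausible program, not a proof, and you are candid about that, so let me focus on two substantive issues.

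First, the opening observation contains a real error. You claim that if $(x,y)$ is ``asymptotic along a sequence $n_k\to\infty$'', i.e.\ $\rho(T^{n_k}x,T^{n_k}y)\to 0$, then \emph{automatically} $\rho(T^{2n_k}x,T^{2n_k}y)\to 0$ along the same sequence. That implication fails in general: $T^{2n_k}x=T^{n_k}(T^{n_k}x)$, and $T^{n_k}$ can separate two nearby points, so proximality along a sequence says nothing about what happens at twice those times. The implication is correct only for genuinely forward-asymptotic pairs, those satisfying $\rho(T^n x,T^n y)\to 0$ as $n\to+\infty$ (then one may take $x'=x$, $y'=y$ and any large $n$ in the definition of $\AP^{[2]}$, since both $n$ and $2n$ are eventually large). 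That stronger hypothesis is what the Sturmian fibres actually satisfy, so your conclusion about Sturmian still stands, but the reasoning as written overclaims; if left uncorrected it would also mislead you about which proximal pairs your construction must avoid placing in $\AP^{[2]}$.

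Second, and more seriously, the inductive construction is never given. The paragraph beginning ``The critical design constraint\ldots'' states what properties the filling pattern must have, but does not exhibit a pattern and does not prove that one exists. This is not a small gap that can be filled by routine verification: the quantifier structure of $\AP^{[2]}$ is regional (you must defeat \emph{all} perturbations $(x',y')$ in a neighbourhood of $(x,y)$, not just the pair itself), and in the Toeplitz setting you are simultaneously constrained by the requirement that minimality and proximality survive the inductive refinement, by the fact that the factor map to the odometer must remain non-injective (otherwise the system is distal, indeed equicontinuous), and by the observation that if the extension ever became open you would be in the regime of the paper's Theorem F, where $\AP^{[2]}=\RP^{[2]}=\RP^{[1]}=R_\pi\neq\Delta_X$ and the construction would collapse. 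Reconciling these constraints is exactly the content of the conjecture, and your proposal does not do so. The fallback suggestion (a cocycle extension with non-abelian finite fibres) is even more speculative and is not accompanied by any argument. As it stands this is a research plan, not a proof, and the conjecture remains open.
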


\end{document}